\numberwithin{definition}{section}
\numberwithin{corollary}{section}
\numberwithin{lemma}{section}
\numberwithin{theorem}{section}
\numberwithin{equation}{section}
\definecolor{tumb}{RGB}{0,101,189}
\newcommand{\R}{\mathbb{R}}
\newcommand{\Rn}{\mathbb{R}^n}
\newcommand{\C}{\mathbb{C}}
\newcommand{\Cn}{\mathbb{C}^n}
\newcommand{\CSn}{\mathbb{CS}^{n-1}}
\newcommand{\N}{\mathbb{N}}
\newcommand{\Sn}{\mathbb{S}^{n-1}}
\newcommand{\cA}{\mathcal{A}}  
\newcommand{\cB}{\mathcal{B}}
\newcommand{\cI}{\mathcal{I}}
\newcommand{\Hess}{\mathrm{Hess\!\;}}
\newcommand{\grad}{\mathrm{grad\!\;}}
\newcommand{\half}{\frac{1}{2}}
\newcommand{\st}{\mathrm{s.\,t.}}
\DeclareMathOperator*{\argmin}{arg\,min}
\DeclareMathOperator*{\diag}{diag}
\newcommand{\be}{\begin{equation}}
\newcommand{\ee}{\end{equation}}
\newcommand{\bee}{\begin{equation*}}
\newcommand{\eee}{\end{equation*}}
\newcommand{\bea}{\begin{eqnarray}}
\newcommand{\eea}{\end{eqnarray}}
\newcommand{\beaa}{\begin{eqnarray*}}
\newcommand{\eeaa}{\end{eqnarray*}}
\begin{document}

\title{On the geometric analysis of a quartic-quadratic optimization problem under a spherical constraint
\thanks{H$.$ Zhang is partly supported by the elite undergraduate training program of School of Mathematical Sciences in Peking University.  Z$.$ Wen is supported in part by the NSFC grants 11831002 and 11421101. A$.$ Milzarek is partly supported by the Fundamental Research Fund -- Shenzhen Research Institute for Big Data (SRIBD) Startup Fund JCYJ-AM20190601.}
}

\titlerunning{Geometric analysis of a quartic-quadratic optimization problem}        

\author{Haixiang Zhang         \and
        Andre Milzarek         \and
        Zaiwen Wen        	   \and
        Wotao Yin
}


\institute{Haixiang Zhang \at
              School of Mathematical Sciences, Peking University, CHINA \\
              \email{1500010620@pku.edu.cn}           
           \and
           Andre Milzarek \at
              Institute for Data and Decision Analytics, Chinese University of Hong Kong, Shenzhen, CHINA \\
              \email{andremilzarek@cuhk.edu.cn}
           \and
           Zaiwen Wen \at
              Beijing International Center for Mathematical Research, Peking University, CHINA \\
              \email{wenzw@pku.edu.cn}
           \and
           Wotao Yin \at
              Department of Mathematics, University of California, Los Angeles, CA \\
              \email{wotaoyin@math.ucla.edu}
}

\date{Received: date / Accepted: date}

\maketitle
\begin{abstract}

This paper considers the problem of solving a special
quartic-quadratic optimization problem with a single sphere constraint, namely,
finding a global and local minimizer of
$\frac{1}{2}\mathbf{z}^{*}A\mathbf{z}+\frac{\beta}{2}\sum_{k=1}^{n}\lvert
z_{k}\rvert^{4}$ such that $\lVert\mathbf{z}\rVert_{2}=1$. This problem spans
multiple domains including quantum mechanics and chemistry sciences and we
investigate the geometric properties of this optimization problem. Fourth-order
optimality conditions are derived for characterizing local and global minima.
When the matrix in the quadratic term is diagonal, the problem has no spurious local minima and global solutions can be represented explicitly and calculated in $O(n\log{n})$ operations. When $A$ is a rank one matrix, the global minima of the problem are unique under certain phase shift schemes. The strict-saddle property, which can imply polynomial time convergence of
second-order-type algorithms, is established when the coefficient $\beta$ of the quartic
term is either at least $O(n^{3/2})$ or not larger than $O(1)$. Finally, the Kurdyka-\L ojasiewicz exponent of quartic-quadratic problem is estimated  and it is shown that the exponent is ${1}/{4}$ for a broad class of stationary points.
\keywords{Constrained quartic-quadratic optimization \and Geometric analysis \and Strict-saddle property \and \L ojasiewicz inequality}
\subclass{15A45 \and 47H60 \and 58K30 \and 58C40 \and 90C26}
\end{abstract}

\section{Introduction}

In this paper, we analyze the geometric properties of the following nonconvex quartic-quadratic problem under a single spherical constraint,
\begin{align}
\min_{\mathbf{z}\in\mathbb{C}^{n}}~f(\mathbf{z})=\frac{1}{2}\mathbf{z}^{*}A\mathbf{z}+\frac{\beta}{2}\sum_{k\in[n]}\lvert z_{k}\rvert^{4}\quad\mathrm{s.t.}\quad\lVert\mathbf{z}\rVert_{2}=1,
\label{eqn:obj}
\end{align}
where $\beta > 0$ is a fixed interaction coefficient and $A \in {\mathbb C}^{n \times n}$ is a given Hermitian matrix. An important class of applications of this type is the so-called Bose-Einstein condensation (BEC) problem, which has attracted great interests in the atomic, molecule and optical physics community and in the condense matter community. Utilizing a proper non-dimensionalization and discretization, the BEC problem can be rewritten as a quartic-quadratic minimization problem of the form \eqref{eqn:obj}, where the matrix $A$ corresponds to the sum of the discretized Laplace operator and a diagonal matrix. If a non-rotating BEC problem is considered, then the variable $\mathbf{z}$ can be restricted to the real space $\mathbb{R}^{n}$ and problem \eqref{eqn:obj} becomes a real optimization problem. For a more detailed setup of the BEC problem and its specific mathematical formulation, we refer to \cite{griffin1996bose,bao2012mathematical,pethick2002bose}. 

Our interest in problem \eqref{eqn:obj} and its geometric properties is primarily triggered by related numerical results and observations with Bose-Einstein condensates and Kohn-Sham density functional calculations, see, e.g., \cite{WenYin13,hu2017adaptive,wu2017regularized,GaoLiuCheYua18}, and is motivated by recent landscape results for matrix completion \cite{ge2016matrix,sun2016guaranteed,ge2017no}, phase retrieval \cite{sun2016geometric,chen2018gradient}, phase synchronization \cite{bandeira2016low,boumal2016nonconvex,liu2017estimation}, and quadratic programs with spherical constraints  \cite{gao2016ojasiewicz,liu2017quadratic}. Understanding the geometric landscape of the nonconvex optimization problem \eqref{eqn:obj} is a fundamental step towards understanding and explaining the global and local behavior of the problem and the performance of associated algorithms. Despite recent progress on the geometric properties of nonconvex minimization problems and due to the complex interaction of the quadratic and quartic terms, the landscape of \eqref{eqn:obj} is still elusive. We further note that in \cite{hu2016note}, Hu et al. have shown that the minimization problem \eqref{eqn:obj} can be interpreted as a special instance of the partition problem and thus, it is generally NP-hard to solve \eqref{eqn:obj}.

\subsection{Related Work and Geometric Concepts}

Although nonconvex optimization problems are generally NP-hard, \cite{murty1987some}, direct and traditional minimization approaches, such as basic gradient and trust region schemes, can still be applied to solve certain and important classes of nonconvex problems -- with astonishing success -- and they remain the methods of choice for the practitioner \cite{boumal2016nonconvex}. A recent and steadily growing area of research concentrates on the identification of such classes of problems and tries to close the discrepancy between theoretical results and numerical performances, see, e.g., \cite{sun2016nonconvex,jain2017non,chi2018nonconvex} for an overview. Herein geometric observations and techniques play a major role in understanding the landscape and the global and local behavior of a nonconvex problem and of associated algorithms. 
Specifically, we are interested in the following geometric properties: 
\setlength{\leftmargini}{7ex}
\begin{itemize}
\item[$(\mathcal P_1)$] All local minimizers are also global solutions, i.e., there are no spurious local minimizers. 
\item[$(\mathcal P_2)$] The objective function possesses negative curvature directions at all saddle points and local maximizers which allows to effectively escape those points. 
\end{itemize} 
Condition $(\mathcal P_2)$ is the basis of the so-called \textit{strict-saddle property} and was introduced in \cite{ge2015escaping,sun2016geometric,ge2017no}. The strict-saddle and other related conditions can be used in the convergence analysis and in the design of algorithms to efficiently avoid saddle points. For instance, Sun, Qu, and Wright \cite{SunQuWri18} established a polynomial-time convergence rate of a Riemannian trust region method that is tailored to solve phase retrieval problems which satisfy the strict-saddle property. Furthermore, in \cite{lee2016gradient,panageas2016gradient,lee2017first-order} it is shown that certain randomly initialized first-order methods can converge to local minimizers and escape saddle points almost surely if the strict-saddle property holds. In the following, we briefly review recent classes of nonconvex optimization problems for which the conditions $(\mathcal P_1)$, $(\mathcal P_2)$, or other desirable geometric properties are satisfied. 

The generalized phase retrieval (GPR) problem is a popular nonconvex problem which has seen remarkable progress these years, see, e.g., \cite{jaganathan2015phase,shechtman2015phase} for an overview. Classical methods that transform the GPR problem into a convex program include convex relaxation techniques \cite{candes2014solving,candes2013phaselift,chen2015exact} and Wirtinger flow algorithms with carefully-designed initialization \cite{candes2015phase}. Phase retrieval problems are typically formulated as a quartic and unconstrained least squares problem depending on $m$ measurements ${\bf y}_k = |{\bf a}_k^*{\bf z}|$, $k = 1,...,m$. Traditional GPR methods can recover the true signal ${\bf z}$ from the measurements as long as the sample size $m$ satisfies $m \gtrapprox n$ or $m \gtrapprox n \log n$ where $n$ is the dimension of the signal. A provable convergence rate for a randomly initialized trust region-type algorithm is given in \cite{sun2016geometric} as long as $m \gtrapprox n\log^{3}{n}$ via showing that all the local minima are global and the strict-saddle property holds. When the signal and observations are real, the convergence rate of the vanilla gradient descent method is established by Chen et al. \cite{chen2018gradient} under the assumption $m \gtrapprox n\log^{13}{n}$.   
Another interesting class of amenable nonconvex problems are low-rank matrix factorization problems. Classical methods for matrix factorization are based on nuclear norm minimization \cite{candes2010power,recht2011simpler} and are usually memory intensive or require long running times.
In \cite{keshavan2010matrix1,keshavan2010matrix2}, Keshavan, Montanari, and Oh showed that the well-initialized gradient descent method can recover the ground truth of those problems. A strong convexity-type property is proved to hold around the optimal solution by Sun and Luo in \cite{sun2016guaranteed} and the objective function is shown to be sharp and weakly convex in nonsmooth settings by Li et al., \cite{li2018nonconvex}. Further, the strict-saddle property for the low-rank matrix factorization problem is established in \cite{ge2017no,ge2016matrix}, as well as for other low rank problems such as robust PCA and matrix sensing. Other classes of nonconvex optimization problems with provable convergence or geometric properties comprise orthogonal tensor decomposition \cite{ge2015escaping,ge2017optimization}, complete dictionary learning \cite{arora2015simple,sun2017complete1,sun2017complete2}, phase synchronization and community detection \cite{bandeira2016low,boumal2016nonconvex,liu2017estimation} and shallow neural networks \cite{liang2018understanding}. There are also several numerical methods that work well in practice for solving the BEC problem (e.g., tools for numerical partial differential equation \cite{adhikari2000numerical,edwards1995numerical} or optimization methods \cite{garcia2001optimizing,hu2016note,wu2017regularized}), but their geometric properties are not known.

So far the mentioned concepts allow to cover global structures and landscapes. Instead, local properties and the local behavior of \eqref{eqn:obj} can be captured by the so-called Kurdyka-{\L}ojasiewicz (KL), \cite{Kur98}, or {\L}ojasiewicz inequality, \cite{lojasiewicz1963propriete}. The {\L}ojasiewicz inequality is a useful tool to estimate the convergence rate of first-order iterative methods in the nonconvex setting \cite{absil2005convergence,merlet2013convergence,schneider2015convergence}. 
Moreover, the convergence rate of first-order methods satisfying a certain line-search criterion and descent condition can be derived via the KL inequality, \cite{attouch2009convergence,bolte2014proximal,schneider2015convergence}, where the rate depends on the KL exponent $\theta$. However, there is no general method to determine or estimate the KL exponent of specific optimization problems, though  the existence of the KL exponent is guaranteed in many situations. For optimizing a real analytic function over a compact real analytic manifold (such as problem \eqref{eqn:obj}), the existence of the KL exponent is established by {\L}ojasiewicz in \cite{lojasiewicz1963propriete}. There are also several few works that derive explicit estimates of the KL exponent for certain structured problems, such as general polynomials \cite{Gwo99,d2005explicit,Yan08}, convex problems \cite{li2018calculus}, non-convex quadratic optimization problems with simple convex constraints \cite{forti2006convergence,li2018calculus,luo1994error,luo2000error}, and quadratic optimization problems with single spherical constraint \cite{gao2016ojasiewicz,liu2017quadratic}. Obviously, the above four cases do not cover our constrained quartic-quadratic optimization problem \eqref{eqn:obj}. 

\subsection{Contributions}

In this work, we investigate different geometric concepts for the quadratic-quartic optimizations problem \eqref{eqn:obj} and give theoretical explanations why first- and second-order methods can perform well on it. In section \ref{sec:notation}, we first derive several new second- and fourth-order optimality conditions for problem \eqref{eqn:obj} that can be utilized to characterize local and global solutions. These conditions capture fundamental geometric properties of stationary points and local minima and form the basis of our geometric analysis. We then investigate problem (\ref{eqn:obj}) in the special case where $A$ is a diagonal matrix. In this situation, we show that a complete characterization of the landscape can be obtained and that problem (\ref{eqn:obj}) does not possess any spurious local minima. Furthermore, global solutions can be computed explicitly using a closed-form expression that involves the projection onto an $n$-simplex which requires $O(n \log{n})$ operations. These results can be partially extended to the case where $A$ is a rank-one matrix and we can prove uniqueness of global minima up to a certain phase shift. In general, the complex interplay between the quartic and quadratic terms impedes the derivation of explicit expressions for stationary points and local minima and complicates the landscape analysis of $f$ significantly. However, if either the quartic or the quadratic term dominates the objective function, we can establish the strict-saddle property $(\mathcal P_2)$ and identify and calculate the location and number of local minima. Our methodology is based on a careful discussion of the quartic and quadratic terms for large and small interaction coefficients that is applicable for general deterministic and arbitrary choices of $A$. We note that previous works and results only cover fourth-order unconstrained optimization problems (e.g., phase retrieval), quadratic constrained optimization problems (e.g., matrix completion and phase synchronization), or fourth-order constrained optimization problems without quadratic terms (e.g., fourth-order tensor decomposition). In particular, there is no interaction between quartic and quadratic terms
 and between their Riemannian derivatives. Different from most nonconvex problems discussed in the literature, our problem does not have a natural probabilistic framework and thus, probabilistic techniques such as concentration inequalities can not be directly applied. 



In addition, we estimate the KL exponent and establish a Riemannian \L ojasiewicz-type inequality for problem (\ref{eqn:obj}). Again, the presence of the quartic term 
considerably complicates the theoretical analysis. In order to deal with the high-order terms appearing in the Taylor expansion, we first separate the nonzero and zero components of a stationary point in order to facilitate the discussion of the leading terms. Then we divide the proof into several cases corresponding to different leading terms. The appearance of the quartic term requires the third-order and the fourth-order terms in the Taylor expansion to fully describe the local behavior, rather than merely the second-order terms. Due to the additional terms, the number of possible leading terms is significantly increased and we carefully analyze the relationship between those different terms. If the matrix $A$ is diagonal, we show that the \L ojasiewicz inequality holds at every stationary point of \eqref{eqn:obj} with exponent $\theta = \frac14$. Moreover, this result can be extended to more general choices of $A$, if the problem is restricted to the real space and positive semi-definiteness of the stationary certification matrix is assumed. The proof is based on the diagonal case and on estimates of the local behavior of the objective function and the Riemannian gradient in different subspaces. The positive semi-definiteness assumption is utilized throughout the proof to handle the non-isolated case and can not be easily removed. Although this additional condition represents a stronger notion of global optimality, a wide range of global minima in the real case satisfy this condition.  
To the best of the authors' knowledge, our work is the first to estimate and analyze these properties for quadratic-quartic optimization problems over a single sphere.

\subsection{Organization and notations}

This paper is organized as follows. In section \ref{sec:notation}, we present second- and fourth-order optimality conditions and characterize global minimizer of problem \eqref{eqn:obj}. Next, in section \ref{sec:diagonal} and section \ref{sec:rank1}, we consider two special cases and investigate geometric properties of problem \eqref{eqn:obj} when $A$ is either diagonal or has rank one. General landscape results for the real case are discussed in section \ref{sec:strict_saddle}. Finally, in section \ref{sec:KLExp}, we estimate the KL exponent of problem \eqref{eqn:obj}. 

For $n \in \N$, we define $[n] := \{1,...,n\}$ and for ${\bf z} \in \Cn$, we set $\|{\bf z}\| = \|{\bf z}\|_2 = \sqrt{{\bf z}^*{\bf z}}$. 
Let $\Sn$ and $\mathbb{CS}^{n-1}$ denote the $n$-dimensional real and complex sphere, respectively. In the following sections, we will use the notation $\mathcal{M} = \mathbb{S}^{n-1}$ or $\mathcal{M} = \mathbb{CS}^{n-1}$ depending on whether we consider the real or the complex case. The tangent space of $\mathbb{CS}^{n-1}$ at a point $\mathbf{z} \in \mathbb{CS}^{n-1}$ is given by $\mathcal{T}_{\mathbf{z}}\mathcal{M}:=\{\mathbf{v}\in\mathbb{C}^{n}:\Re({\bf v}^{*}{\bf z})=0\}$.  
For ${\bf z} \in \Cn$, $\diag(\mathbf{z})$ is a diagonal matrix with diagonal entries $z_{1},...,z_{n}$ and we use $|{\bf z}|^2$ to denote the component-wise absolute value, $|{\bf z}|^2 = {\bf z} \odot \bar {\bf z}$, of ${\bf z}$. We use $I$ to denote the ($n \times n$) identity matrix.
%
%
%
The Euclidean and corresponding Riemannian gradient of $f$ at $\mathbf{z}$ on $\mathcal M$ are denoted by $\nabla f(\mathbf{z})$ and $\grad{f(\mathbf{z})}$. Similarly, $\nabla^{2}f(\mathbf{z})$ and $\Hess{f(\mathbf{z})}$ represent the Euclidean and Riemannian Hessian, respectively. 

Throughout this paper and without loss of generality we will assume that the matrix $A$ is positive definite. Furthermore, $A = P \Lambda P^*$ is an associated eigenvalue decomposition of the Hermitian matrix $A$ with $\lambda_1 \geq \lambda_2 \geq ... \geq \lambda_n > 0$, $\Lambda = \diag(\lambda_1,...,\lambda_n)$, $P = (\mathbf{p}_{1},...,\mathbf{p}_{n}) \in \C^{n \times n}$, and $P^*P = I$.

\section{Wirtinger Calculus and Optimality Conditions}
\label{sec:notation}

Since the real-valued objective function $f$ is nonanalytic in ${\bf z}$, we utilize the Wirtinger calculus \cite{kreutz2009complex,sorber2012unconstrained} to express the complex derivatives of $f$. Specifically, the Wirtinger gradient and Hessian of $f$ are defined as 
\begin{align*}
&\nabla f(\mathbf{z}):=
\begin{bmatrix}
\nabla_{\mathbf{z}}f\\
\nabla_{\bar{\mathbf{z}}f}\\
\end{bmatrix},\quad\nabla^{2}f(\mathbf{z}):=
\begin{bmatrix}
\frac{\partial}{\partial\mathbf{z}}(\frac{\partial f}{\partial\mathbf{z}})^{*}
&\frac{\partial}{\partial\bar{\mathbf{z}}}(\frac{\partial f}{\partial \mathbf{z}})^{*}\\
 & \\
\frac{\partial}{\partial\bar{\mathbf{z}}}(\frac{\partial f}{\partial\bar{\mathbf{z}}})^{*}
&\frac{\partial}{\partial\bar{\mathbf{z}}}(\frac{\partial f}{\partial\bar{\mathbf{z}}})^{*} 
\end{bmatrix},
\end{align*}
where $\nabla_{\mathbf{z}}f(\mathbf{z}):=({\partial f}/{\partial \mathbf{z}})^{*},\nabla_{\bar{\mathbf{z}}}f(\bar{\mathbf{z}}):=({\partial f}/{\partial \bar{\mathbf{z}}})^{*}$ and following \cite{wu2017regularized}, we obtain
%
$\nabla f_{\bf z}({\bf z}) = \frac12 A{\bf z} + \beta \diag(|{\bf z}|^2){\bf z}$, $\nabla_{\bar{\bf z}} f(\bar {\bf z}) = \overline{\nabla f_{\bf z}({\bf z})}$, and 
\[ \nabla^{2}f(\mathbf{z}) = \begin{bmatrix} \frac{1}{2}A+2\beta\diag(|{\bf z}|^2) & \beta \diag(z_{1}^{2},...,z_{n}^{2}) \\ \beta \diag(\bar{z}_{1}^{2},...,\bar{z}_{n}^{2}) & \frac{1}{2}\bar{A}+2\beta \diag(|{\bf z}|^2) \end{bmatrix}. \]
%
Furthermore, using the identification $\mathcal{T}_{\mathbf{z}}\mathcal{M} \equiv \{ {\bf v}, \bar {\bf v} \in \Cn: {\bf z}^*{\bf v} + \bar {\bf z}^T \bar {\bf v} = 0 \}$,  the Riemannian gradient and Hessian of $f$ are given by
\begin{align} \label{eq:lambda}\grad{f(\mathbf{z})}= \nabla f({\bf z}) -\lambda \begin{bmatrix} \mathbf{z} \\ \bar{\mathbf{z}} \end{bmatrix} \quad \text{and} \quad \Hess{f(\mathbf{z})} = \nabla^2 f({\bf z}) -\lambda I_{2n}, \end{align}
%
where $\lambda = {\bf z}^* \nabla_{\bf z} f({\bf z}) =  \frac{1}{2}\mathbf{z}^{*}A\mathbf{z}+\beta\lVert\mathbf{z}\rVert_{4}^{4} \in \R$, see, e.g., \cite[Section 3.6 and 5.5]{Absil2009Optimization}. Let us notice that $\grad{f}$ and $\Hess{f}$ coincide with the standard gradient and Hessian of the Lagrangian $L({\bf z},\mu) = f({\bf z},\bar{\bf z}) - \frac{\mu}{2} (\bar {\bf z}^T {\bf z} - 1)$ when choosing $\mu = \lambda$. Exploiting the symmetry in $\grad{f}$, the associated first-order optimality conditions for \eqref{eqn:obj} now take the form: 
\begin{equation} \label{eq:stat} \grad{f({\bf z})} = 0 \quad \iff \quad [A + 2\beta \diag(|{\bf z}|^2)] {\bf z} = 2\lambda {\bf z}. \end{equation}
A point ${\bf z} \in \Cn$ satisfying the conditions \eqref{eq:stat} will be called \textit{stationary point} of problem \eqref{eqn:obj}. We define the curvature of $f$ at ${\bf z}$ along a direction $\mathbf{v} \in \Cn$ via 
\begin{align*}
H_{f}(\mathbf{z})[\mathbf{v}] & := 
\begin{bmatrix}
\mathbf{v}^* & {\mathbf{v}}^T \end{bmatrix} \Hess{f(\mathbf{z})}
\begin{bmatrix}
\mathbf{v}\\
\bar{\mathbf{v}}
\end{bmatrix} \\
& =  \mathbf{v}^{*} [ A+4\beta \diag(|{\bf z}|^2)]{\bf v} +2\beta \cdot {\sum}_{k = 1}^n \Re(v_{k}^{2}{\bar z_{k}}^{2})-2\lambda\lVert\mathbf{v}\rVert^{2} \\
& =  \mathbf{v}^{*}[ A + 2\beta \diag(|{\bf z}|^2) - 2\lambda I] \mathbf{v} + 4\beta \cdot {\sum}_{k=1}^n \Re(v_k\bar z_k)^2
\end{align*}
In the real case, the latter formulae reduce to $\grad{f(\mathbf{z})} = [A+2\beta\diag(|{\bf z}|^2)]{\bf z}-2\lambda \mathbf{z}$, $\Hess{f(\mathbf{z})}=A+6\beta\diag(|{\bf z}|^2)-2\lambda I_{n}$, and $H_{f}(\mathbf{z})[\mathbf{v}] :=\mathbf{v}^{T}\Hess{f(\mathbf{z})}\mathbf{v}$.

\subsection{Second-Order Optimality Conditions}

Due to the analogy of the Riemannian expressions and the Lagrangian formalism, we can apply classical optimality results to describe the second-order optimality conditions of problem \eqref{eqn:obj}. In particular, by \cite[Theorem 12.5 and 12.6]{NoceWrig06} we have:

%
\begin{lemma}[Second-Order Necessary and Sufficient Conditions] \label{lemma:son-sos}
Suppose that $\mathbf{z}$ is a local solution of problem \eqref{eqn:obj}. Then, it holds that $\grad{f(\mathbf{z})}=0$ and we have $H_{f}(\mathbf{z})[\mathbf{v}]\geq0$ for all $\mathbf{v}\in\mathcal{T}_{\mathbf{z}}\mathcal{M}$. Conversely, if $\mathbf{z}$ is a stationary point satisfying $\grad f(\mathbf{z})=0$ and $H_{f}(\mathbf{z})[\mathbf{v}]>0$ for all $\mathbf{v}\in\mathcal{T}_{\mathbf{z}}\mathcal{M}\backslash\{0\}$, then $\mathbf{z}$ is an isolated local minimum of problem \eqref{eqn:obj}.
\end{lemma}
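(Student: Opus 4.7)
The plan is to reduce the statement to the classical second-order KKT theorems for smooth equality-constrained optimization, applied in a real formulation, and then to translate the conclusions back into the Wirtinger language used in the lemma.

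First, I would rewrite \eqref{eqn:obj} as an equality-constrained optimization problem over $\R^{2n}$. Writing $\mathbf{z} = \mathbf{x} + i\mathbf{y}$ and $\mathbf{v} = \mathbf{u} + i\mathbf{w}$, define $\tilde f(\mathbf{x},\mathbf{y}) := f(\mathbf{x}+i\mathbf{y})$ and $g(\mathbf{x},\mathbf{y}) := \mathbf{x}^{T}\mathbf{x} + \mathbf{y}^{T}\mathbf{y} - 1$. Since $\nabla g(\mathbf{x},\mathbf{y}) = 2(\mathbf{x},\mathbf{y})^{T} \neq 0$ on $\{g=0\}$, the linear independence constraint qualification holds at every feasible point. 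A direct calculation shows that the real tangent space $\{(\mathbf{u},\mathbf{w}) : \mathbf{u}^{T}\mathbf{x} + \mathbf{w}^{T}\mathbf{y} = 0\}$ is precisely the real image of $\mathcal{T}_{\mathbf{z}}\mathcal{M} = \{\mathbf{v} \in \Cn : \Re(\mathbf{v}^{*}\mathbf{z}) = 0\}$ via the identification $\mathbf{v} \leftrightarrow (\mathbf{u},\mathbf{w})$.

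Next, I would invoke \cite[Theorem 12.5]{NoceWrig06} for the necessary part: any local minimizer $(\mathbf{x},\mathbf{y})$ admits a Lagrange multiplier $\lambda$ such that $\nabla_{(\mathbf{x},\mathbf{y})} L = 0$ and $(\mathbf{u},\mathbf{w})^{T}\nabla^{2}_{(\mathbf{x},\mathbf{y})} L\,(\mathbf{u},\mathbf{w}) \geq 0$ on the tangent space, where $L = \tilde f - \frac{\lambda}{2} g$. I would then verify that with the choice $\mu = \lambda = \mathbf{z}^{*}\nabla_{\mathbf{z}}f(\mathbf{z}) \in \R$ explained in \eqref{eq:lambda}, the Wirtinger stationarity condition $\grad f(\mathbf{z}) = 0$ is equivalent to $\nabla_{(\mathbf{x},\mathbf{y})} L = 0$, and that the quadratic form $H_{f}(\mathbf{z})[\mathbf{v}]$ coincides with $(\mathbf{u},\mathbf{w})^{T}\nabla^{2}_{(\mathbf{x},\mathbf{y})} L\,(\mathbf{u},\mathbf{w})$. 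This second identity is the main computational step: expanding $\nabla^{2} \tilde f$ and the Hermitian structure of the Wirtinger Hessian, one sees that the cross terms $\beta\sum_{k} \Re(v_{k}^{2}\bar z_{k}^{2})$ in the definition of $H_{f}(\mathbf{z})[\mathbf{v}]$ are exactly what arises when one differentiates $\tilde f$ twice in $(\mathbf{x},\mathbf{y})$ and substitutes the decomposition $\mathbf{v} = \mathbf{u} + i\mathbf{w}$.

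For the sufficient part, I would apply \cite[Theorem 12.6]{NoceWrig06}: if $\nabla_{(\mathbf{x},\mathbf{y})} L = 0$ and the tangent-space Hessian is positive definite, then $(\mathbf{x},\mathbf{y})$ is a strict local minimum of the real-variable problem, hence in particular an isolated one. Translating back through the identifications above yields that $\mathbf{z}$ is an isolated local minimum of \eqref{eqn:obj}.

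The main obstacle is the bookkeeping in matching $H_{f}(\mathbf{z})[\mathbf{v}]$ with the Lagrangian Hessian's quadratic form: one must carefully expand the $2n \times 2n$ Wirtinger Hessian in the block representation given just before the lemma, account for the factor-$\frac{1}{2}$ convention in $\nabla_{\mathbf{z}} f$ and $\nabla_{\bar{\mathbf{z}}} f$, and confirm that the restriction to $\mathcal{T}_{\mathbf{z}}\mathcal{M}$ in the complex picture corresponds exactly to the tangent space of the real sphere under $\mathbf{v} \leftrightarrow (\mathbf{u},\mathbf{w})$. Once this equivalence of quadratic forms on the tangent space is established, both directions follow immediately from the cited classical theorems.
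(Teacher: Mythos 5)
Your proposal is correct and matches the paper's approach: the paper cites \cite[Theorems 12.5 and 12.6]{NoceWrig06} after having already set up, in \eqref{eq:lambda} and the surrounding text, the identification of $\grad f$ and $\Hess f$ with the Lagrangian gradient and Hessian at $\mu = \lambda$, which is precisely the translation you carry out. Your version simply makes explicit the passage through the real formulation in $\R^{2n}$ (Wirtinger calculus is exactly a repackaging of these real derivatives), so the two arguments are the same modulo level of detail.
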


Next, for some ${\bf z} \in \Cn$ we define the equivalence class
\begin{align} \label{eq:equi-class} \llbracket {\bf z} \rrbracket := \{ {\bf y} \in \Cn: |y_k| = |z_k|, \;\; \forall~k \in [n] \}. \end{align}
The following theorem gives a general sufficient condition for a stationary point to be a global minimum of problem \eqref{eqn:obj}.  

\begin{theorem} \label{theorem:glob-suff} Let ${\bf z} \in \Cn$ be a stationary point of problem \eqref{eqn:obj} with corresponding multiplier $\lambda$ and suppose that the matrix
\begin{align} \label{eq:glob-suff} H := A + 2\beta \diag(|{\bf z}|^2) - 2\lambda I \succeq 0 \end{align}
is positive semidefinite. Then ${\bf z}$ is a global minimum and all global minima of problem \eqref{eqn:obj} belong to the equivalence class $\llbracket {\bf z} \rrbracket$.
\end{theorem}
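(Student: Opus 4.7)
The plan is to prove both assertions simultaneously by establishing the pointwise inequality $f(\mathbf{y}) \ge f(\mathbf{z})$ for every $\mathbf{y} \in \mathcal{M}$, together with a sharp characterization of the equality case. The key structural input is that the Lagrange multiplier satisfies $\lambda = \frac{1}{2}\mathbf{z}^*A\mathbf{z} + \beta\|\mathbf{z}\|_4^4$, so that $f(\mathbf{z}) = \lambda - \tfrac{\beta}{2}\|\mathbf{z}\|_4^4$. This identity lets me absorb $f(\mathbf{z})$ into a quadratic-in-$\mathbf{y}$ expression that interacts nicely with $H$.

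First I would write, using $\mathbf{y}^*\mathbf{y} = 1$,
\begin{align*}
f(\mathbf{y}) - f(\mathbf{z}) \;=\; \tfrac{1}{2}\mathbf{y}^*(A - 2\lambda I)\mathbf{y} + \tfrac{\beta}{2}\|\mathbf{y}\|_4^4 + \tfrac{\beta}{2}\|\mathbf{z}\|_4^4.
\end{align*}
Since $H = A - 2\lambda I + 2\beta\,\mathrm{diag}(|\mathbf{z}|^2) \succeq 0$ by assumption, I obtain
\begin{align*}
\mathbf{y}^*(A - 2\lambda I)\mathbf{y} \;\ge\; -2\beta \sum_{k=1}^n |z_k|^2 |y_k|^2.
\end{align*}
Substituting this estimate into the previous display yields
\begin{align*}
f(\mathbf{y}) - f(\mathbf{z}) \;\ge\; \tfrac{\beta}{2}\sum_{k=1}^n \bigl(|y_k|^2 - |z_k|^2\bigr)^2 \;\ge\; 0,
\end{align*}
which proves that $\mathbf{z}$ is a global minimum. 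Note that the stationarity condition \eqref{eq:stat} is not needed for this inequality itself; the role of stationarity is only to fix the value of $\lambda$ and thereby pin down $H$.

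For the second claim, suppose $\mathbf{y}$ is any global minimum, so equality holds throughout. The quartic remainder forces $|y_k| = |z_k|$ for every $k \in [n]$, i.e.\ $\mathbf{y} \in \llbracket \mathbf{z} \rrbracket$. (As a byproduct, the equality $\mathbf{y}^*H\mathbf{y} = 0$ shows that any such $\mathbf{y}$ also lies in $\ker H$, which is a useful observation for later sections but not needed for the statement.) I expect the argument to be short and mostly algebraic; the only subtle point is spotting the right way to rewrite $f(\mathbf{y}) - f(\mathbf{z})$ so that the semidefiniteness of $H$ plugs in cleanly and leaves behind a perfect square in the quartic variables, but this is driven directly by the definition of $H$ and requires no further machinery.
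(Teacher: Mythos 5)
Your proof is correct and follows essentially the same route as the paper: rewrite $f(\mathbf{y}) - f(\mathbf{z})$ using the formula for $\lambda$, bring in $H$, and recognize the quartic remainder as $\tfrac{\beta}{2}\sum_k(|y_k|^2 - |z_k|^2)^2$. The only cosmetic difference is that the paper keeps the exact identity $f(\mathbf{y}) - f(\mathbf{z}) = \tfrac12\mathbf{y}^*H\mathbf{y} + \tfrac{\beta}{2}\sum_k(|y_k|^2 - |z_k|^2)^2$ (reused later as \eqref{eq:objective_value}), whereas you absorb the $H$-term into an inequality midway.
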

\begin{proof} Let ${\bf y} \in \Cn$ be an arbitrary point with $\|{\bf y}\| = 1$ and let us introduce the polar coordinates $z_i =  r_i e^{i \theta_i}$, $y_i = t_i e^{i \phi_i}$ for $r_i, t_i \geq 0$, $\theta_i, \phi_i \in [0,2\pi]$ and all $i \in[n]$. Using the stationarity condition $\grad{f({\bf z})} = 0$ and $\|{\bf z}\| = \|{\bf y} \| = 1$, it holds that
%
\begin{align}
\nonumber f({\bf y}) - f({\bf z}) &  = \half {\bf y}^* A {\bf y} - \half {\bf z}^* [ 2\lambda {\bf z} - 2\beta \diag(|{\bf z}|^2) {\bf z}] +  \frac{\beta}{2} (\|{\bf y}\|_4^4 - \|{\bf z}\|_4^4) \\ \nonumber & \hspace{-0ex}  = \half {\bf y}^* H {\bf y} - \beta {\bf y}^* \diag(|{\bf z}|^2){\bf y} + \frac{\beta}{2}(\|{\bf y}\|_4^4 + \|{\bf z}\|_4^4) \\ & \hspace{-0ex}  = \half {\bf y}^* H {\bf y} + \frac{\beta}{2} \; {\sum}_{k=1}^n [t_k^2 - r_k^2]^2. 
\label{eq:objective_value}
\end{align}
%
Consequently, the positive semidefiniteness of $H$ yields $f({\bf y}) - f({\bf z}) \geq 0$ for all ${\bf y} \in \Cn$ with $\|{\bf y}\| = 1$. Suppose now that ${\bf y}$ is a global minimum with ${\bf y} \notin \llbracket {\bf z} \rrbracket$. In this case the last sum in the above expression is strictly positive which, together with the positive semi-definiteness of $H$ yields a contradiction.  
\end{proof}

%
%
If problem \eqref{eqn:obj} has two different global minimizers ${\bf y}$ and ${\bf z}$ with $\llbracket {\bf y} \rrbracket \cap \llbracket {\bf z} \rrbracket = \emptyset$, Theorem \ref{theorem:glob-suff} implies that $H$ can not be positive semidefinite. Moreover, if condition \eqref{eq:glob-suff} holds at a stationary point ${\bf z}$, it automatically has to hold at all global minimizers in $\llbracket {\bf z} \rrbracket$. 

The definiteness condition in Theorem \ref{theorem:glob-suff} can be equivalently rephrased as follows: The multiplier $\lambda$ associated with ${\bf z}$ is the minimum eigenvalue of the matrix $A + 2\beta \diag(|{\bf z}|^2)$ and ${\bf z}$ is the corresponding eigenvector. Characterizations of this type  are also known for (quadratic) trust-region subproblems and for general quadratic programs with quadratic constraints, see \cite{Mor93,VanThoai2005}. Furthermore, utilizing \cite[Theorem 3.1]{cai2018on}, it can be shown that such an eigenvector ${\bf z}$ with the stated properties exists under the assumption $0 < \beta \leq (\lambda_{n-1}-\lambda_n)/8$. In this case, the condition \eqref{eq:glob-suff} is necessary \text{and} sufficient for global optimality. 

\subsection{Fourth-order optimality conditions} \label{section:foc}
In the following section, we derive several fourth-order optimality conditions based on a special and finer expansion of the objective function $f$. In contrast to the sufficient conditions in Theorem \ref{theorem:glob-suff}, this allows us to fully characterize global optima. Let ${\bf z}\in\CSn$ be an arbitrary stationary point. For ${\bf v} \in \mathcal T_{\bf z}\mathcal M \cap \CSn$ and $\theta \in \R$, we consider the point ${\bf y}=\cos(\theta){\bf z}+\sin(\theta){\bf v} \in \CSn$. 
Using this decomposition in \eqref{eq:objective_value}, we obtain 
\begin{align*}
f({\bf y}) - f({\bf z}) & = \half {\bf y}^{*}H{\bf y}+\frac{\beta}{2}\sum_{k\in[n]}\left[|y_{k}|^{2}-|z_{k}|^{2}\right]^{2} \\ & = 
\frac{\sin^2(\theta)}{2} \left[ H_f({\bf z})[{\bf v}]  + 2 \beta \sin(2\theta) \cdot {\sum}_{k \in [n]} (|v_k|^2 - |z_k|^2) \Re(\bar z_k v_k) \right]  \\ & \hspace{4ex} +  \frac{\beta\sin^{4}(\theta)}{2} \left[ {\sum}_{k \in [n]} [(|v_{k}|^{2}-|z_{k}|^{2})^2 - 4\Re(\bar{z}_{k}v_{k})^2] \right]. 
\end{align*}
Defining $H_{3}({\bf v}) := \beta \sum_{k \in [n]}(|v_k|^2 - |z_k|^2)\Re(\bar z_k v_k)$, $H_4({\bf v}) := \beta {\sum}_{k \in [n]} [(|v_{k}|^{2}-|z_{k}|^{2})^2 - 4\Re(\bar{z}_{k}v_{k})^2]$ and dividing the latter equation by $\sin^4(\theta)$, this yields
\begin{align*} G({\bf v},\cot(\theta)) & =  \frac{2[f({\bf y}) - f({\bf z})]}{\sin^4(\theta)} \\ & = H_f({\bf z})[{\bf v}] \cdot \cot^2(\theta) + 4 H_3({\bf v}) \cdot \cot(\theta) + 2[f({\bf v}) - f({\bf z})] ,
\end{align*}
for $\theta \neq k\pi$, $k \in {\mathbb Z}$. 
We first propose necessary and sufficient optimality conditions for ${\bf z}$ being a local minimum.
\begin{theorem}[Characterization of Local Optimality]\label{thm:4order-local}
Let ${\bf z}$ be a stationary point of problem \eqref{eqn:obj}. Then ${\bf z}$ is locally optimal if and only if there exists a constant $M_{{\bf z}}>0$ such that $G({\bf v},t)\geq0$ for all ${\bf v}\in\mathcal{T}_{{\bf z}}\mathcal{M}\cap\CSn$ and all $t$ such that $|t|\geq M_{{\bf z}}$.
\end{theorem}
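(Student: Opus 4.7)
The plan is to exploit the defining identity
$G({\bf v},\cot\theta)\cdot\sin^4\theta = 2[f({\bf y})-f({\bf z})]$, where ${\bf y}=\cos(\theta){\bf z}+\sin(\theta){\bf v}$, in both directions. The key observation is that large values of $|t|=|\cot\theta|$ correspond precisely to $\theta$ close to $0$ or close to $\pi$, i.e., to ${\bf y}$ close to $+{\bf z}$ or to $-{\bf z}$. The first preparatory step is to verify that every ${\bf y}\in\CSn\setminus\{\pm{\bf z}\}$ can be written in this polar form with ${\bf v}\in\mathcal{T}_{\bf z}\mathcal{M}\cap\CSn$ and $\theta\in(0,\pi)$ via $\cos\theta=\Re({\bf z}^*{\bf y})$ and ${\bf v}=({\bf y}-\cos\theta\,{\bf z})/\sin\theta$; a short computation confirms $\Re({\bf z}^*{\bf v})=0$, $\|{\bf v}\|=1$, and crucially $\|{\bf y}-{\bf z}\|^2 = 2(1-\cos\theta)$, which depends only on $\theta$ and not on ${\bf v}$.

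For the necessity direction, I would first note the sign-symmetry $f(-{\bf y})=f({\bf y})$ of the objective (both the quadratic form ${\bf z}^*A{\bf z}$ and the quartic term $\sum|z_k|^4$ are invariant under ${\bf y}\mapsto -{\bf y}$), which implies that if ${\bf z}$ is a local minimum then so is $-{\bf z}$, with the same objective value. Hence there is $\epsilon>0$ such that $f({\bf y})\geq f({\bf z})$ whenever ${\bf y}\in\CSn$ lies within distance $\epsilon$ of either $+{\bf z}$ or $-{\bf z}$. I would then pick $\theta_0\in(0,\pi/2)$ with $2(1-\cos\theta_0)<\epsilon^2$ and set $M_{\bf z}:=\cot\theta_0>0$; any $(\theta,{\bf v})$ with $|\cot\theta|\geq M_{\bf z}$ satisfies $\theta\in(0,\theta_0]\cup[\pi-\theta_0,\pi)$, and the corresponding ${\bf y}$ is within $\epsilon$ of $\pm{\bf z}$, whence $G({\bf v},\cot\theta)=2[f({\bf y})-f({\bf z})]/\sin^4\theta\geq 0$.

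For the sufficiency direction, I would take the hypothesised $M_{\bf z}$ and set $\theta_0:=\mathrm{arccot}(M_{\bf z})\in(0,\pi/2)$. For any ${\bf y}\in\CSn$ with $\|{\bf y}-{\bf z}\|<\sqrt{2(1-\cos\theta_0)}$, the polar decomposition yields some $\theta\in[0,\theta_0)$: if $\theta=0$ then ${\bf y}={\bf z}$ trivially gives equality, while if $\theta\in(0,\theta_0)$ then $\cot\theta>M_{\bf z}$, so the hypothesis delivers $G({\bf v},\cot\theta)\geq 0$ and therefore $f({\bf y})\geq f({\bf z})$. This establishes local optimality of ${\bf z}$.

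The main obstacle, I expect, is the necessity direction: local optimality of ${\bf z}$ alone does not a priori control $f$ near the antipodal point $-{\bf z}$, yet the hypothesis $|t|\geq M_{\bf z}$ symmetrically includes both limits $\cot\theta\to +\infty$ and $\cot\theta\to -\infty$, forcing us to handle both poles. The sign-symmetry $f(-{\bf y})=f({\bf y})$ is what reconciles this. Uniformity of the threshold in ${\bf v}$ is not an issue because, as noted above, $\|{\bf y}-{\bf z}\|$ depends only on $\theta$ thanks to $\|{\bf z}\|=\|{\bf v}\|=1$ and $\Re({\bf z}^*{\bf v})=0$, so a single $\theta_0$ (hence a single $M_{\bf z}$) works for all directions simultaneously.
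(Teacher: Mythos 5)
Your proof is correct and follows the same polar-coordinate reparametrization as the paper: identify ${\bf y}\in\CSn$ near $\pm{\bf z}$ with $(\theta,{\bf v})$ via ${\bf y}=\cos(\theta){\bf z}+\sin(\theta){\bf v}$, observe $\|{\bf y}-{\bf z}\|^{2}=2(1-\cos\theta)$ depends only on $\theta$, and translate a distance threshold $\delta_{\bf z}$ into a cotangent threshold $M_{\bf z}$. However, you correctly identify and explicitly close a step that the paper's own proof glosses over. The paper asserts that ${\bf y}\in B({\bf z},\delta_{\bf z})\cap\CSn$ is \emph{equivalent} to $|\cot\theta|\geq M_{\bf z}$, but this is only a one-way implication: large $|\cot\theta|$ also admits $\theta$ near $\pi$, i.e.\ ${\bf y}$ near $-{\bf z}$, which local optimality at ${\bf z}$ does not directly control. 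Your fix — invoking $f(-{\bf y})=f({\bf y})$ (equivalently, the symmetry $G(-{\bf v},-t)=G({\bf v},t)$ coming from $H_{f}({\bf z})[-{\bf v}]=H_{f}({\bf z})[{\bf v}]$, $H_{3}(-{\bf v})=-H_{3}({\bf v})$, $f(-{\bf v})=f({\bf v})$) so that $-{\bf z}$ is also a local minimizer with the same value — is exactly what is needed to make the necessity direction rigorous. Your handling of both poles $\theta\in(0,\theta_{0}]\cup[\pi-\theta_{0},\pi)$, the verification that the decomposition is well-defined and unit-norm for ${\bf y}\neq\pm{\bf z}$, and the sufficiency argument with $\theta_{0}=\mathrm{arccot}(M_{\bf z})$ are all sound.
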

\begin{proof}
By definition, ${\bf z}$ is a local minimum if and only if there exists a constant $\delta_{{\bf z}}>0$ such that $f({\bf y})\geq f({\bf z})$ for all ${\bf y}\in B({\bf z},\delta_{{\bf z}})\cap\CSn$. Due to $\|{\bf y}-{\bf z}\|^{2}=2-2\cos(\theta)$, the condition ${\bf y}\in B({\bf z},\delta_{{\bf z}})\cap\CSn$ is equivalent to ${\bf y} = \cos(\theta) {\bf z} + \sin(\theta) {\bf v}$ with ${\bf v}\in\mathcal{T}_{{\bf z}}\mathcal{M}\cap\CSn$ and $\cos(\theta)\geq1-{\delta_{{\bf z}}^{2}}/{2}$. Moreover, $\cos(\theta)\geq1-{\delta_{{\bf z}}^{2}}/{2}$ implies
\begin{align*}
|\sin(\theta)|\leq\delta_{{\bf z}}\sqrt{1-{\delta_{{\bf z}}^{2}}/{4}},\quad|\cot(\theta)|\geq\frac{1-\delta_{{\bf z}}^{2}}{\delta_{{\bf z}}\sqrt{1-\delta_{{\bf z}}^{2}/4}} =: M_{\bf z}.
\end{align*}
Consequently, it holds that ${\bf y}\in B({\bf z},\delta_{{\bf z}})\cap\CSn$ if and only if ${\bf v}\in\mathcal{T}_{{\bf z}}\mathcal{M}\cap\CSn$ and $|\cot(\theta)|\geq M_{{\bf z}}$. Further, by definition of $G$, the necessary and sufficient conditions for ${\bf z}$ being a local minima are equivalent to: there exists a constant $M_{{\bf z}}>0$ such that $G({\bf v},t)\geq0$ for all ${\bf v}\in\mathcal{T}_{{\bf z}}\mathcal{M}\cap\CSn$ and all $t$ such that $|t|\geq M_{{\bf z}}$.
\end{proof}

Similarly, we can derive the fourth-order global optimality conditions.
\begin{theorem}[Characterization of Global Optimality] \label{theorem:glob-opt-full}
Let ${\bf z}$ be a stationary point of problem \eqref{eqn:obj}. Then ${\bf z}$ is a global solution if and only if
\begin{align*}
& H_f({\bf z})[{\bf v}] \geq 0,~2 H_{3}({\bf v})^2 \leq H_f({\bf z})[{\bf v}] \cdot [f({\bf v}) - f({\bf z})], \quad \forall~{\bf v}\in\mathcal{T}_{{\bf z}}\mathcal{M}\cap\CSn,\\
&H_{3}({\bf v}) = 0,~H_{4}({\bf v})\geq0, \quad \forall~{\bf v}\in\mathcal{T}_{{\bf z}}\mathcal{M}\cap\CSn~\text{with}~H_f({\bf z})[{\bf v}] =0.
\end{align*}
\end{theorem}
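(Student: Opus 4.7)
The plan is to rewrite global optimality as the non-negativity of the polynomial $G({\bf v}, \cdot)$ regarded as a univariate quadratic in $\cot\theta$, and then to apply the classical positivity criterion for real quadratics. Combining the expansion derived just before the theorem,
\[
f({\bf y}) - f({\bf z}) = \tfrac{1}{2}\sin^{4}(\theta) \cdot G({\bf v}, \cot\theta), \qquad \theta \notin \pi\mathbb{Z},
\]
with an elementary parameterization of $\CSn$, global optimality of ${\bf z}$ will be equivalent to
\[
G({\bf v}, t) = H_{f}({\bf z})[{\bf v}]\, t^{2} + 4H_{3}({\bf v})\, t + 2[f({\bf v}) - f({\bf z})] \geq 0, \quad \forall\, {\bf v} \in \mathcal{T}_{\bf z}\mathcal{M} \cap \CSn,\ \forall\, t \in \R.
\]

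First I would verify the parameterization: every ${\bf y} \in \CSn$ can be written as ${\bf y} = \cos(\theta){\bf z} + \sin(\theta){\bf v}$ with $\theta \in \R$ and ${\bf v} \in \mathcal{T}_{\bf z}\mathcal{M} \cap \CSn$. Writing ${\bf z}^{*}{\bf y} = a + ib$ one selects $\theta$ with $\cos(\theta) = a$ (possible since $a^{2} + b^{2} \leq 1$ by Cauchy-Schwarz) and, when $\sin(\theta) \neq 0$, sets ${\bf v} := ({\bf y} - \cos(\theta){\bf z})/\sin(\theta)$; a short direct check gives $\|{\bf v}\| = 1$ and $\Re({\bf z}^{*}{\bf v}) = 0$. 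The excluded case $\sin(\theta) = 0$ forces ${\bf y} = \pm {\bf z}$, where $f({\bf y}) = f({\bf z})$ trivially. As $\theta$ runs through $\R \setminus \pi\mathbb{Z}$ the map $\theta \mapsto \cot(\theta)$ is surjective onto $\R$ and $\sin^{4}(\theta) > 0$, so the claimed equivalence between $f({\bf y}) \geq f({\bf z})$ on $\CSn$ and $G({\bf v}, t) \geq 0$ on $(\mathcal{T}_{\bf z}\mathcal{M} \cap \CSn) \times \R$ follows.

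The final step is the elementary test: a real quadratic $at^{2} + bt + c$ is non-negative on all of $\R$ iff either $a > 0$ with $b^{2} \leq 4ac$, or $a = b = 0$ with $c \geq 0$. Substituting $a = H_{f}({\bf z})[{\bf v}]$, $b = 4H_{3}({\bf v})$, $c = 2[f({\bf v}) - f({\bf z})]$, the strict regime $H_{f}({\bf z})[{\bf v}] > 0$ yields $2H_{3}({\bf v})^{2} \leq H_{f}({\bf z})[{\bf v}]\cdot[f({\bf v}) - f({\bf z})]$, while the degenerate regime $H_{f}({\bf z})[{\bf v}] = 0$ yields $H_{3}({\bf v}) = 0$ together with $f({\bf v}) \geq f({\bf z})$. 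Invoking the identity $2[f({\bf v}) - f({\bf z})] = H_{f}({\bf z})[{\bf v}] + H_{4}({\bf v})$, obtained by setting $\theta = \pi/2$ in the expansion above, the latter condition collapses to $H_{4}({\bf v}) \geq 0$. Observing further that the discriminant inequality already forces $H_{3}({\bf v}) = 0$ whenever $H_{f}({\bf z})[{\bf v}] = 0$, the union of the two regimes is exactly the two lines displayed in the theorem.

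The only non-routine ingredient is the parameterization in the first step, specifically checking that the prescribed vector ${\bf v}$ lies on the complex tangent sphere; thereafter the proof is a direct application of the univariate quadratic positivity criterion, so I do not anticipate a serious obstacle.
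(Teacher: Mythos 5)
Your proof is correct and takes essentially the same route as the paper: reduce global optimality to the requirement $G(\mathbf{v},t)\geq0$ for all $\mathbf{v}\in\mathcal{T}_{\mathbf{z}}\mathcal{M}\cap\CSn$ and $t\in\R$, then apply the elementary nonnegativity criterion for a (possibly degenerate) real quadratic. The paper states this equivalence in one sentence without detail; you usefully verify the parameterization $\mathbf{y}=\cos(\theta)\mathbf{z}+\sin(\theta)\mathbf{v}$ of $\CSn$ and spell out both regimes of the quadratic test, including the identity $2[f(\mathbf{v})-f(\mathbf{z})]=H_f(\mathbf{z})[\mathbf{v}]+H_4(\mathbf{v})$ needed to recast $c\geq0$ as $H_4(\mathbf{v})\geq0$.
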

\begin{proof}
A stationary point ${\bf z}$ is a global minimum if and only if $f({\bf y})\geq f({\bf z})$ for all ${\bf y}\in\CSn$, which is equivalent to $G({\bf v},t)\geq0$ for all $t\in\mathbb{R}$ and all ${\bf v}\in\mathcal{T}_{{\bf z}}\mathcal{M}\cap\CSn$. However, nonnegativity of the (degenerated) quadratic function $t \mapsto G({\bf v},t)=H_f({\bf z})[{\bf v}] t^{2}+ 4 H_{3}({\bf v})t+ 2 [f({\bf v}) - f({\bf z})]$ on $\R$ is equivalent to the conditions stated in Theorem \ref{theorem:glob-opt-full}. 
\end{proof}

Finally, we establish fourth-order necessary conditions for local optimality.

\begin{theorem}[Fourth-Order Necessary Optimality Conditions] \label{theorem:foc-nes}
Let ${\bf z}$ be a local minima of problem \eqref{eqn:obj}. Then it holds that
\[ \left[ \begin{array}{l} H_f({\bf z})[{\bf v}]\geq0, \quad \forall~{\bf v}\in\mathcal{T}_{{\bf z}}\mathcal{M}\cap\CSn,\\[0.5ex]
H_{3}({\bf v})=0,~H_{4}({\bf v})\geq0, \quad \forall~{\bf v}\in\mathcal{T}_{{\bf z}}\mathcal{M}\cap\CSn~\text{with}~H_f({\bf z})[{\bf v}]=0,\\[0.5ex]
4H_{3}^{2}({\bf v},{\bf w})\leq H_f({\bf z})[{\bf w}]H_{4}({\bf v}),\\[0.5ex]
 \quad \forall~{\bf v},{\bf w}\in\mathcal{T}_{{\bf z}}\mathcal{M}\cap\CSn~\text{with}~H_f({\bf z})[{\bf v}]=0,~H_f({\bf z})[{\bf w}]>0,~{\bf v}^{*}{\bf w}=0,
 \end{array}  \right. \]
where $H_{3}({\bf v},{\bf w})=H_3({\bf v}) + 2\beta\sum_{k\in[n]}\Re(\bar{v}_{k}w_{k})\Re(\bar{v}_{k}z_{k})$.
\end{theorem}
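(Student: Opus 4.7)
The plan is to derive all three necessary conditions from the exact fourth-order expansion
\[ f({\bf y}) - f({\bf z}) = \tfrac{1}{2}\sin^2\theta \cdot H_f({\bf z})[{\bf u}] + 2\sin^3\theta\cos\theta \cdot H_3({\bf u}) + \tfrac{1}{2}\sin^4\theta \cdot H_4({\bf u}), \]
valid for any curve ${\bf y}=\cos\theta \cdot {\bf z}+\sin\theta \cdot {\bf u}$ with ${\bf u}\in\mathcal{T}_{\bf z}\mathcal{M}\cap\CSn$, together with the local minimality condition $f({\bf y})\geq f({\bf z})$ for $|\theta|$ small. The first inequality is immediate from Lemma \ref{lemma:son-sos}. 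For the second, I fix ${\bf v}\in\mathcal{T}_{\bf z}\mathcal{M}\cap\CSn$ with $H_f({\bf z})[{\bf v}]=0$ and take ${\bf u}={\bf v}$: the expansion collapses to $2\sin^3\theta\cos\theta \cdot H_3({\bf v})+\tfrac{1}{2}\sin^4\theta \cdot H_4({\bf v})$, so comparing signs of $\theta$ at leading order $\theta^3$ forces $H_3({\bf v})=0$, after which dividing by $\sin^4\theta$ yields $H_4({\bf v})\geq 0$.

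The Schwarz-type fourth inequality is the central challenge and requires a two-parameter family. I set
\[ {\bf u}(\phi)=\cos\phi \cdot {\bf v}+\sin\phi \cdot {\bf w}, \qquad {\bf y}(\theta,\phi)=\cos\theta \cdot {\bf z}+\sin\theta \cdot {\bf u}(\phi). \]
The hypotheses ${\bf v}^*{\bf w}=0$ and ${\bf v},{\bf w}\in\mathcal{T}_{\bf z}\mathcal{M}$ guarantee ${\bf u}(\phi)\in\mathcal{T}_{\bf z}\mathcal{M}\cap\CSn$ for all $\phi$. A preliminary step is to exploit that $H_f({\bf z})$ restricts to a real quadratic form on the real span of ${\bf v},{\bf w}$, yielding
\[ H_f({\bf z})[a{\bf v}+b{\bf w}] = a^2 H_f({\bf z})[{\bf v}] + b^2 H_f({\bf z})[{\bf w}] + 2ab \cdot B({\bf v},{\bf w}), \qquad a,b\in\R, \]
for a symmetric cross-term $B$. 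Because $\phi\mapsto H_f({\bf z})[{\bf u}(\phi)]$ is nonnegative by the first inequality and vanishes at $\phi=0$, differentiation at $\phi=0$ forces $B({\bf v},{\bf w})=0$, so that $H_f({\bf z})[{\bf u}(\phi)]=\sin^2\phi \cdot H_f({\bf z})[{\bf w}]$ holds exactly.

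Substituting this together with the first-order expansions $H_3({\bf u}(\phi))=\phi \cdot H_3({\bf v},{\bf w})+O(\phi^2)$ and $H_4({\bf u}(\phi))=H_4({\bf v})+O(\phi)$ into the main identity, and coupling $\phi=c\theta$ so that every active term is of order $\theta^4$, the local minimality condition reads
\[ \tfrac{c^2}{2}H_f({\bf z})[{\bf w}] + 2c\cdot H_3({\bf v},{\bf w}) + \tfrac{1}{2}H_4({\bf v}) + O(\theta) \geq 0 \]
for every $c\in\R$ and every sufficiently small $\theta>0$. Letting $\theta\to 0^+$ and invoking $H_f({\bf z})[{\bf w}]>0$, nonnegativity of the limit quadratic in $c$ is equivalent to its discriminant being nonpositive, which is precisely $4H_3^2({\bf v},{\bf w})\leq H_f({\bf z})[{\bf w}]\cdot H_4({\bf v})$.

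The main technical obstacle is the linear-in-$\phi$ expansion of $H_3({\bf u}(\phi))$: since $H_3$ is not a homogeneous polynomial in its argument — it mixes a cubic piece $\beta\sum|u_k|^2\Re(\bar z_k u_k)$ with a linear piece $-\beta\sum|z_k|^2\Re(\bar z_k u_k)$ — the directional derivative at ${\bf v}$ along ${\bf w}$ has to be computed carefully and matched with the stated bilinear expression for $H_3({\bf v},{\bf w})$, exploiting $H_3({\bf v})=0$ from step (2). A secondary concern is to verify that the $O(\theta^5)$ and $O(\theta^4\phi)$ remainders are dominated uniformly in $c$ on bounded sets, so that the limit $\theta\to 0^+$ may legitimately be taken before varying $c$.
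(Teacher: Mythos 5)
Your proposal follows essentially the same architecture as the paper's proof: use the exact fourth-order expansion of $f$ along a spherical geodesic to get conditions $1$ and $2$, then for condition $3$ interpolate between $\mathbf v$ and $\mathbf w$ in the tangent space and exploit the resulting quadratic structure. The packaging differs in one interesting way: the paper routes everything through the function $G(\cdot,t)$ of Theorem~\ref{thm:4order-local}, analyses the discriminant of $G(\mathbf y(\theta),\cdot)$ for small $\theta$, and argues by contradiction that an escaping root would violate local minimality (splitting into the cases $H_4(\mathbf v)=0$ and $H_4(\mathbf v)>0$). You instead couple the two angles via $\phi=c\theta$, divide by $\theta^4$, and pass to the limit $\theta\to0^+$ to obtain a nonnegative quadratic in $c$ directly; nonpositivity of that quadratic's discriminant is exactly the third inequality, with no case split needed. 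Your observation that $B(\mathbf v,\mathbf w)=0$ in the restricted quadratic form (so $H_f(\mathbf z)[\mathbf u(\phi)]=\sin^2\phi\cdot H_f(\mathbf z)[\mathbf w]$ holds exactly) is the same remark the paper makes when it writes $H_f(\mathbf z)[\mathbf y]=H_f(\mathbf z)[\mathbf w]\sin^2\theta$.

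There is one genuine gap, which you yourself flag as ``the main technical obstacle'': the linear-in-$\phi$ expansion of $H_3(\mathbf u(\phi))$. You assert $H_3(\mathbf u(\phi))=\phi\,H_3(\mathbf v,\mathbf w)+O(\phi^2)$ but do not carry out the computation. Carrying it out, with $\mathbf u(\phi)=\cos\phi\,\mathbf v+\sin\phi\,\mathbf w$, gives
\begin{equation*}
\frac{d}{d\phi}H_3(\mathbf u(\phi))\Big|_{\phi=0}
= 2\beta\sum_{k\in[n]}\Re(\bar v_k w_k)\Re(\bar z_k v_k)
\;+\;\beta\sum_{k\in[n]}\bigl(|v_k|^2-|z_k|^2\bigr)\Re(\bar z_k w_k),
\end{equation*}
whereas the statement's formula, $H_3(\mathbf v,\mathbf w)=H_3(\mathbf v)+2\beta\sum_k\Re(\bar v_k w_k)\Re(\bar v_k z_k)$, reduces (using $H_3(\mathbf v)=0$) to only the first of those two sums. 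The second sum $\beta\sum_k(|v_k|^2-|z_k|^2)\Re(\bar z_k w_k)$ does not visibly vanish under the given hypotheses ($H_f(\mathbf z)[\mathbf v]=0$, $\mathbf v^*\mathbf w=0$). So either the definition of $H_3(\mathbf v,\mathbf w)$ in the theorem statement should be read as this full directional derivative, or an argument is needed for why the extra term drops out; the same issue is latent in the paper's equation \eqref{eqn:foc-nes}, which asserts $H_3(\mathbf y)=H_3(\mathbf v,\mathbf w)\sin\theta+O(\sin^2\theta)$ without displaying the calculation. Your strategy is sound, but you should actually perform this derivative and confront the mismatch before claiming the theorem as stated; otherwise the final discriminant inequality is proved for a different bilinear form than the one in the statement.
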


\begin{proof}
Theorem \ref{thm:4order-local} implies that there is $M_{{\bf z}} > 0$ such that $G({\bf v},t)\geq0$ for all ${\bf v}\in\mathcal{T}_{{\bf z}}\mathcal{M}\cap\CSn$ and $|t|\geq M_{{\bf z}}$. Hence, for fixed ${\bf v}$, we have $G({\bf v},t) = H_f({\bf z})[{\bf v}] t^{2}+ 4 H_{3}({\bf v})t+ 2 [f({\bf v}) - f({\bf z})] \geq 0$ for $t\rightarrow\pm\infty$. Thus, it follows $H_f({\bf z})[{\bf v}] \geq 0$ and if $H_f({\bf z})[{\bf v}] = 0$, it must hold $H_{3}({\bf v}) = 0$ and $2[f({\bf v}) - f({\bf z})] = H_f({\bf z})[{\bf v}] + H_4({\bf v}) = H_4({\bf v})\geq0$.
%
%
Now we prove the last condition. Suppose ${\bf v}$ and ${\bf w}$ are two vectors in $\mathcal{T}_{{\bf z}}\mathcal{M}\cap\CSn$ satisfying $H_f({\bf z})[{\bf v}]=0$, $H_f({\bf z})[{\bf w}]>0$ and ${\bf v}^{*}{\bf w}=0$. Let ${\bf y}=\cos(\theta){\bf v}+\sin(\theta){\bf w}\in\mathcal{T}_{{\bf z}}\mathcal{M}\cap\CSn$, where $\theta\in[-\pi,\pi]\backslash\{0\}$. We consider the limiting process $\theta\rightarrow0$. Then, due to $H_f({\bf z})[{\bf v}] \geq 0$ for all ${\bf v} \in \mathcal{T}_{{\bf z}}\mathcal{M}$, it follows that $H_f({\bf z})[{\bf y}]=H_f({\bf z})[{\bf w}]\cdot\sin^{2}(\theta)$ and 
\begin{equation}
H_{3}({\bf y})=H_{3}({\bf v},{\bf w})\cdot\sin(\theta)+O(\sin^{2}(\theta)), \quad H_{4}({\bf y})=H_{4}({\bf v})+O(\sin(\theta)).
\label{eqn:foc-nes}
\end{equation}
We first discuss the case $H_{4}({\bf v})=0$. 
The discriminant of $G({\bf y},t)$ -- as a quadratic function of $t$ -- is given by 
\begin{align*}
16H_{3}^{2}({\bf y})-4H_f({\bf z})[{\bf y}](H_f({\bf z})[{\bf y}]+H_{4}({\bf y})) & =16H_{3}^{2}({\bf v},{\bf w})\sin^{2}(\theta)+O(\sin^{3}(\theta)).
\end{align*}
If $H_{3}({\bf v},{\bf w})\neq0$, this term is positive for all sufficiently small $\theta$ and hence, $G({\bf y},t)$ has two real roots. 
The absolute value of the larger root is
\begin{align*}
\frac{4|H_{3}({\bf y})|+\sqrt{16H_{3}^{2}({\bf y})-4H_f({\bf z})[{\bf y}](H_f({\bf z})[{\bf y}]+H_{4}({\bf y}))}}{2H_f({\bf z})[{\bf y}]}=\Theta\left(|\sin(\theta)|^{-1}\right),
\end{align*}
which implies that there does not exist a constant $M_{{\bf z}}>0$ such that $G({\bf y},t)\geq0$ for all ${\bf y}$ and $t$ such that ${\bf y}\in\mathcal{T}_{{\bf z}}\mathcal{M}\cap\CSn$ and $|t|\geq M_{{\bf z}}$. Thus, we have $H_{3}({\bf v},{\bf w})=0$ in this case. Next, we consider the case $H_{4}({\bf v})>0$ and let us suppose $4H_{3}^{2}({\bf v},{\bf w})>H_f({\bf z})[{\bf y}]H_{4}({\bf v})=\Theta(\sin^{2}(\theta))$. 
The discriminant of $G({\bf y},t)$ now satisfies
\begin{align*}
&16H_{3}^{2}({\bf y})-4H_f({\bf z})[{\bf y}](H_f({\bf z})[{\bf y}]+H_{4}({\bf y}))\\
& \hspace{6ex}=4\left[4H_{3}^{2}({\bf v},{\bf w})\sin^{2}(\theta)-H_f({\bf z})[{\bf w}]H_{4}({\bf v})\right]\sin^{2}(\theta)+O(\sin^{3}(\theta))>0,
\end{align*}
for all sufficiently small $\theta \neq 0$. As in the last case, the absolute value of the larger root of $t \mapsto G({\bf y},t)$ converges to $+\infty$ as $\theta \to 0$ which yields the same contradiction. Consequently, we have $4H_{3}^{2}({\bf v},{\bf w})\leq H_f({\bf z})[{\bf w}]H_{4}({\bf v})$ by combining the two cases.
\end{proof}

The fourth-order optimality conditions in Theorem \ref{theorem:glob-opt-full} and \ref{theorem:foc-nes} resemble other known fourth order conditions, see, e.g., \cite{Ded95,Pen17,CarGouToi18}, and might be hard to verify in practice. However, in the real case, the inequality $H_4({\bf v}) \geq 0$ is equivalent to checking $\||{\bf v}|^2 - |{\bf z}|\| \geq 2\sqrt{2} \|{\bf z}\|_4^2$. In this situation, the framework presented in \cite{AnaGe16} can be used to verify the first two conditions in Theorem \ref{theorem:foc-nes} in polynomial time.

\begin{theorem}[Fourth-Order Sufficient Optimality Conditions] \label{theorem:foc-suf}
Suppose that the point ${\bf z}\in\CSn$ satisfies the conditions
\[ \left[ \begin{array}{l}  H_f({\bf z})[{\bf v}]\geq0, \quad \forall~{\bf v}\in\mathcal{T}_{{\bf z}}\mathcal{M}\cap\CSn,\\[0.5ex]
H_{3}({\bf v})=0,~H_{4}({\bf v})\geq0, \quad \forall~{\bf v}\in\mathcal{T}_{{\bf z}}\mathcal{M}\cap\CSn~\text{with}~H_f({\bf z})[{\bf v}]=0,\\[0.5ex]
4H_{3}^{2}({\bf v},{\bf w})\leq H_f({\bf z})[{\bf w}]H_{4}({\bf v}),\\[0.5ex]
\quad \forall~{\bf v},{\bf w}\in\mathcal{T}_{{\bf z}}\mathcal{M}\cap\CSn~\text{with}~H_f({\bf z})[{\bf v}]=0,~H_f({\bf z})[{\bf w}]>0,~{\bf v}^{*}{\bf w}=0,
 \end{array}  \right. \]
where $H_{3}({\bf v},{\bf w})=H_3({\bf v}) + 2\beta\sum_{k\in[n]} \Re(\bar{v}_{k}w_{k})\Re(\bar{v}_{k}z_{k})$ and equality in the last inequality holds if and only if $H_{4}({\bf v})=0$. Then ${\bf z}$ is a local minimum of \eqref{eqn:obj}.
\end{theorem}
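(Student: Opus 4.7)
The approach is to invoke Theorem \ref{thm:4order-local}, which reduces local optimality of ${\bf z}$ to exhibiting a constant $M_{\bf z}>0$ with $G({\bf v},t) = H_f({\bf z})[{\bf v}]\,t^2 + 4H_3({\bf v})\,t + H_f({\bf z})[{\bf v}] + H_4({\bf v}) \geq 0$ for every ${\bf v} \in \mathcal{T}_{{\bf z}}\mathcal{M}\cap\CSn$ and every $|t|\geq M_{\bf z}$. I would argue by contradiction: select a hypothetical sequence $({\bf v}_k,t_k)$ with $|t_k|\to\infty$ and $G({\bf v}_k,t_k)<0$, and extract a convergent subsequence ${\bf v}_k \to {\bf v}^*$ by compactness of the sphere. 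Dividing by $t_k^2$ forces $H_f({\bf z})[{\bf v}^*]=0$, since otherwise the leading quadratic in $t$ would dominate the bounded remainder and render $G$ eventually positive; the second of the hypotheses then delivers $H_3({\bf v}^*)=0$ and $H_4({\bf v}^*)\geq 0$.

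Next I would exploit that positive semi-definiteness of $H_f({\bf z})[\cdot]$ together with $H_f({\bf z})[{\bf v}^*]=0$ places ${\bf v}^*$ in the kernel of the underlying real quadratic form and, consequently, that its associated symmetric bilinear form $B_f$ satisfies $B_f({\bf v}^*,\cdot)\equiv 0$ on the whole of $\mathcal{T}_{{\bf z}}\mathcal{M}$. Decomposing ${\bf v}_k = a_k{\bf v}^* + b_k{\bf w}_k$ by real-orthogonal projection, with $a_k\to 1$, $b_k\to 0^+$, $\|{\bf w}_k\|=1$, and $\Re(({\bf v}^*)^*{\bf w}_k)=0$, produces the exact identity $H_f({\bf z})[{\bf v}_k]=b_k^2 H_f({\bf z})[{\bf w}_k]$ together with the Taylor expansions $H_3({\bf v}_k) = b_k H_3({\bf v}^*,{\bf w}_k) + O(b_k^2)$ and $H_4({\bf v}_k) = H_4({\bf v}^*) + O(b_k)$ already used in \eqref{eqn:foc-nes}. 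Passing to a further subsequence with ${\bf w}_k\to{\bf w}^*$ and rescaling via $s_k := b_k t_k$ turns $G({\bf v}_k,t_k)$ into $H_f({\bf z})[{\bf w}_k]s_k^2 + 4H_3({\bf v}^*,{\bf w}_k)s_k + H_4({\bf v}^*) + o(1)$ when $s_k$ stays bounded; the regime $|s_k|\to\infty$ is dispatched directly by the $s_k^2$ term provided $H_f({\bf z})[{\bf w}^*]>0$, while the case $H_f({\bf z})[{\bf w}^*]=0$ requires a recursive re-decomposition of ${\bf w}_k$, the relevant kernel dimension dropping at each iteration.

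In the bounded regime, the limit is the quadratic $Q(s) := H_f({\bf z})[{\bf w}^*]s^2 + 4H_3({\bf v}^*,{\bf w}^*)s + H_4({\bf v}^*)$, which by the third hypothesis has nonpositive discriminant and is therefore nonnegative on $\R$, contradicting $G({\bf v}_k,t_k)<0\to Q(s^*)$ whenever $H_4({\bf v}^*)>0$ because the equality-iff clause forbids $Q(s^*)=0$ in that case. I expect the borderline scenario $H_4({\bf v}^*)=0$ to be the main technical obstacle: the equality clause also enforces $H_3({\bf v}^*,{\bf w}^*)=0$, so $Q$ collapses to $H_f({\bf z})[{\bf w}^*]s^2$ and vanishes at $s^*=0$, and the contradiction must then be extracted from a further order of the expansion and an iteration of the real-orthogonal decomposition inside the remaining kernel. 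A secondary subtlety is that the third hypothesis uses the Hermitian orthogonality ${\bf v}^*{\bf w}=0$ whereas the projection only enforces the real-orthogonality $\Re(({\bf v}^*)^*{\bf w}^*)=0$, a gap I would close by splitting the purely imaginary component of $({\bf v}^*)^*{\bf w}^*$ off from ${\bf w}^*$ and verifying that its contribution to all relevant quantities is subleading.
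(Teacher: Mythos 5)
Your proposal follows the same high-level template as the paper's proof: invoke Theorem \ref{thm:4order-local}, study the quadratic $t \mapsto G({\bf v},t)$, and decompose ${\bf v}$ against the kernel of $H_f({\bf z})[\cdot]$ when ${\bf v}$ approaches it. The recast as a sequential-compactness/contradiction argument is a legitimate variation of the paper's direct exhibit of a uniform root bound $M_{\bf z}$, and your two ``subtleties'' are genuine. However, as written the proposal leaves two substantive holes that the paper's argument is designed to avoid.

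First, you decompose ${\bf v}_k$ only against the one-dimensional direction ${\bf v}^*$, so nothing prevents the residual ${\bf w}_k$ from still lying in (or converging into) the kernel $\mathcal N := \{{\bf v} : H_f({\bf z})[{\bf v}]=0\}$ whenever $\dim\mathcal N>1$; this is exactly why you are forced into the ``recursive re-decomposition'' sub-case $H_f({\bf z})[{\bf w}^*]=0$, which you do not carry out and whose termination is not obvious. The paper decomposes against the \emph{entire} kernel at once, ${\bf v}=\cos\theta\,{\bf u}+\sin\theta\,{\bf w}$ with ${\bf u}\in\mathcal N$ and ${\bf w}\in\mathcal W=\mathcal T_{\bf z}\mathcal M\cap\CSn\cap\mathcal N^\perp$, so that $H_f({\bf z})[{\bf w}]\ge\bar\sigma>0$ uniformly and no recursion is ever needed. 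Second, and more importantly, the borderline case $H_4({\bf v}^*)=0$, $s^*=0$ is where the actual work lives, and your sketch stops at ``extract from a further order of the expansion.'' The paper resolves this case not by passing to a limit but by bounding the roots of $G({\bf v},\cdot)$ directly: from $H_3({\bf u},{\bf w})=0$ one gets $|H_3({\bf v})|\le\eta\sin^2\theta$ and, since $H_f({\bf z})[{\bf v}]=H_f({\bf z})[{\bf w}]\sin^2\theta\ge\bar\sigma\sin^2\theta$, the roots are bounded by $(4\eta+1)/\bar\sigma$ independently of $\theta$; the sequence framing alone does not produce that bound and hence cannot close the case where $G({\bf v}_k,t_k)\to 0$ from below. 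Finally, your observation about the mismatch between $\Re(({\bf v}^*)^*{\bf w})=0$ from the real-orthogonal projection and the hypothesis ${\bf v}^*{\bf w}=0$ is a real issue (the paper bypasses it by postulating a decomposition with ${\bf u}^*{\bf w}=0$ outright), but your proposed fix of peeling off the imaginary component of $({\bf v}^*)^*{\bf w}^*$ is only asserted, not verified, and needs to be checked to be consistent with staying in $\mathcal T_{\bf z}\mathcal M$ and with the Taylor expansions \eqref{eqn:foc-nes}.
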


\begin{proof}
We prove that there exists a constant $M_{{\bf z}}>0$ such that $G({\bf v},t) \geq 0$ for all ${\bf v}\in\mathcal{T}_{{\bf z}}\mathcal{M}\cap\CSn$ and $t$ with $|t|\geq M_{{\bf z}}$. If this condition holds, then by Theorem \ref{thm:4order-local}, we know that ${\bf z}$ is a local minima of problem \eqref{eqn:obj}. If $H_{f}({\bf z})[{\bf v}]=0$, then it follows $G({\bf v},t)=H_{4}({\bf v})\geq0$ for all $t\in\mathbb{R}$.

Next, let $\epsilon>0$ be a small constant. If $H_{f}({\bf z})[{\bf v}]>\epsilon$, then the roots of the quadratic function $t \mapsto G({\bf v},t)$ are bounded by
\begin{align*}
\frac{4|H_{3}({\bf v})|+\sqrt{|16H_{3}^{2}({\bf v})-4H_{f}({\bf z})[{\bf v}](H_{f}({\bf z})[{\bf v}]+H_{4}({\bf v}))|}}{2H_{f}({\bf z})[{\bf v}]}\leq\frac{(4+2\sqrt{6})M}{2\epsilon},
\end{align*}
where $M :=\max_{{\bf v}\in\mathcal{T}_{{\bf z}}\mathcal{M}\cap\CSn}\max\left\{H_{f}({\bf z})[{\bf v}],|H_{3}({\bf v})|,|H_{4}({\bf v})|\right\}$. The continuity of the functions $H_{f}({\bf z})[\cdot]$, $|H_{3}(\cdot)|$, and $|H_{4}(\cdot)|$ implies $M<+\infty$. Hence, we have $G({\bf v},t)\geq0$ for all $t\in\mathbb{R}$ such that $|t|\geq (2+\sqrt{6})M\epsilon^{-1}$. We now consider the case $H_{f}({\bf z})[{\bf v}]\in(0,\epsilon]$. Let us define the decomposition
\begin{align*}
{\bf v}=\cos(\theta)\cdot{\bf u}+\sin(\theta)\cdot{\bf w},~H_{f}({\bf z})[{\bf u}]=0,~H_{f}({\bf z})[{\bf w}]>0,~{\bf u}^{*}{\bf w}=0,
\end{align*}
where $\theta\in[-\pi,\pi]$ and ${\bf u},{\bf w}\in\mathcal{T}_{{\bf z}}\mathcal{M}\cap\CSn$. Specifically, introducing the sets $\mathcal N := \{{\bf v} \in \Cn: H_f({\bf z})[{\bf v}] = 0\}$ and $\mathcal W := \mathcal{T}_{{\bf z}}\mathcal{M}\cap\CSn \cap \mathcal N^\bot$, there exists $\bar \sigma > 0$ such that $H_f({\bf z})[{\bf w}] \geq \bar \sigma$ for all ${\bf w} \in \mathcal W$. 
As before, we have $H_{f}({\bf z})[{\bf v}]=H_{f}({\bf z})[{\bf w}]\cdot\sin^{2}(\theta)$ and in the case $H_{4}({\bf u})=0$,  the condition $4H_{3}^{2}({\bf u},{\bf w})\leq H_f({\bf z})[{\bf w}]H_{4}({\bf u})$ implies $H_{3}({\bf u},{\bf w})=0$. Utilizing \eqref{eqn:foc-nes}, this yields $|H_{3}({\bf v})|\leq\eta\sin^{2}(\theta)$ for some universal constant $\eta>0$ and $H_{4}({\bf v})=O(\sin(\theta))$. 
If the discriminant of the quadratic function $t \mapsto G({\bf v},t)$ is negative, it follows $G({\bf v},t)\geq0$ for all $t\in\mathbb{R}$. Otherwise, if the discriminant is non-negative, then the absolute values of the roots are bounded by
\begin{align*}
&\frac{4|H_{3}({\bf v})|+\sqrt{16H_{3}^{2}({\bf v})-4H_{f}({\bf z})[{\bf v}](H_{f}({\bf z})[{\bf v}]+H_{4}({\bf v}))}}{2H_{f}({\bf z})[{\bf w}]\cdot\sin^{2}(\theta)}\\
& \hspace{10ex}\leq\frac{4\eta\sin^{2}(\theta)+\sqrt{16\eta^{2}-4H_{f}^{2}({\bf z})[{\bf w}]+O(\sin(\theta))}\sin^{2}(\theta)}{2H_{f}({\bf z})[{\bf w}]\cdot\sin^{2}(\theta)}\\
& \hspace{10ex}\leq\frac{4\eta+\sqrt{16\eta^{2}+1}}{2H_{f}({\bf z})[{\bf w}]}\leq\frac{4\eta+1}{\bar \sigma}.
\end{align*}
Consequently, it holds $G({\bf v},t)\geq0$ for all $t\in\mathbb{R}$ such that $|t|\geq {(4\eta+1)}{\bar\sigma}^{-1}$. Otherwise, if $H_{4}({\bf u})>0$, then the last condition of this theorem implies $4H_{3}^{2}({\bf u},{\bf w})<H_{f}({\bf z})[{\bf w}]H_{4}({\bf u})$. In this case, the discriminant of $G({\bf v},\cdot)$ satisfies
\begin{align*}
&16H_{3}^{2}({\bf v})-4H_{f}({\bf z})[{\bf v}](H_{f}({\bf z})[{\bf v}]+H_{4}({\bf v}))\\
& \hspace{10ex}=4[4H_{3}^{2}({\bf u},{\bf w})-H_{f}({\bf z})[{\bf w}]H_{4}({\bf u})]\cdot\sin^{2}(\theta)+O(\sin^{3}(\theta))<0,
\end{align*}
if $\theta$ is chosen sufficiently small and thus, we obtain $G({\bf v},t)\geq0$ for all $t\in\mathbb{R}$. Overall, we can set $M_{{\bf z}} :=\max\{(2+\sqrt{6})M\epsilon^{-1},(4\eta+1){\bar\sigma}^{-1}\}$ and ${\bf z}$ is a local minima of problem \eqref{eqn:obj}.

\end{proof}

\section{Geometric Analysis of the Diagonal Case}
\label{sec:diagonal}

In this section, we investigate the geometric properties of problem (\ref{eqn:obj}) under the assumption that $A$ is a diagonal matrix, i.e., $A=\diag(\mathbf{a}) = \diag(a_{1},a_{2},...,a_{n})$. 

By setting $u_{k}=\lvert z_{k}\rvert^{2}$, we can reformulate problem \eqref{eqn:obj} as a convex problem
\begin{align}
\min_{\mathbf{u}\in\mathbb{R}^{n}}~\frac{1}{2}\mathbf{a}^{T}\mathbf{u}+\frac{\beta}{2}\lVert\mathbf{u}\rVert^{2} \quad\mathrm{s.t.}\quad \mathbf{u}\in\Delta_{n}.
\label{eqn:obj_1}
\end{align}
where $\Delta_{n}=\{\mathbf{u}\in\mathbb{R}^{n}:u _{k}\in[0,1], k\in[n], \sum_{k\in[n]} u_{k}=1\}$ is the $n$-simplex. We will use this connection later to show that there are no spurious local minima in the diagonal case and that the global solutions can be characterized via the unique solution of the strongly convex problem \eqref{eqn:obj_1}.
%

We first derive an explicit representation of critical points of problem \eqref{eqn:obj}.
%
\begin{lemma}[Characterizing Stationary Points] Suppose that $A$ is a diagonal and let $\mathbf{z}\in\mathbb{CS}^{n-1}$ be given. Let us set $ \mathcal I :=  \{k \in [n]: z_k \neq 0\}$ and 
\[ u_k := 0, \quad \forall~k \in \mathcal I^{\sf C}, \quad u_k := \frac{1}{|\mathcal I|} + \frac{1}{2\beta}\left[ \frac{1}{|\mathcal I|} {\sum}_{i \in \mathcal I} a_i - a_k \right], \quad \forall~k \in \mathcal I. \]
Then, ${\bf z}$ is a stationary point if and only if there exist $\theta_{k}\in[0,2\pi)$, $k \in [n]$, such that $u_k \in (0,1]$ for all $k \in \mathcal I$ and $z_{k} = \sqrt{u_k}e^{i\theta_{k}}$ for all $k$.
\label{tho:diag_1}
\end{lemma}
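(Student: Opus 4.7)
The plan is to exploit the diagonal structure of $A$ to reduce the vector equation $\grad f(\mathbf{z})=0$ to $n$ independent scalar equations, from which the formula for $u_k$ falls out after solving for the multiplier $\lambda$ using the sphere constraint.

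First, I would write the stationarity condition \eqref{eq:stat} componentwise. With $A=\diag(a_1,\ldots,a_n)$, the $k$-th equation becomes
\begin{equation*}
(a_k + 2\beta |z_k|^2)\,z_k \;=\; 2\lambda z_k, \qquad k \in [n].
\end{equation*}
So for each $k$ we have the dichotomy: either $z_k=0$ (i.e.\ $k \notin \mathcal I$), or $z_k \neq 0$ and $|z_k|^2 = (2\lambda - a_k)/(2\beta)$. In the second case the phase of $z_k$ is unconstrained, which is what gives rise to the free parameters $\theta_k$ in the final form $z_k = \sqrt{u_k}\,e^{i\theta_k}$.

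Second, I would determine $\lambda$ from the normalization $\|\mathbf{z}\|_2 = 1$. Summing $u_k = |z_k|^2 = (2\lambda - a_k)/(2\beta)$ over $k \in \mathcal I$ and using $\sum_{k \in \mathcal I} u_k = 1$ yields $2\lambda = \frac{2\beta}{|\mathcal I|} + \frac{1}{|\mathcal I|}\sum_{i \in \mathcal I} a_i$. Substituting back gives exactly
\begin{equation*}
u_k \;=\; \frac{1}{|\mathcal I|} + \frac{1}{2\beta}\!\left[\frac{1}{|\mathcal I|}{\sum}_{i\in\mathcal I} a_i - a_k\right], \qquad k \in \mathcal I,
\end{equation*}
and $u_k = 0$ for $k \notin \mathcal I$, matching the lemma. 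Conversely, plugging this $u_k$ and arbitrary phases $\theta_k$ back into the componentwise equation verifies the ``if'' direction by direct computation, provided one additionally identifies $\lambda = \frac12 \mathbf{z}^*A\mathbf{z} + \beta\|\mathbf{z}\|_4^4$, which follows from multiplying the stationarity equation by $\bar z_k$ and summing.

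Third, I would address the requirement $u_k \in (0,1]$ for $k \in \mathcal I$. The upper bound $u_k \leq 1$ is automatic since the $u_k$'s are nonnegative and sum to $1$. The strict positivity $u_k > 0$ is the genuine non-trivial condition (it may fail for some subsets $\mathcal I$, in which case no corresponding stationary point exists); it is needed so that $\sqrt{u_k} > 0$ is a valid modulus for $z_k \neq 0$ and is consistent with the defining property of the index set $\mathcal I$. I expect no conceptual obstacle in this proof — the only subtlety is bookkeeping to ensure that the characterization is both necessary (derived from the stationarity equation and normalization) and sufficient (by plug-in verification), and that the edge condition $u_k > 0$ on $\mathcal I$ is stated explicitly.
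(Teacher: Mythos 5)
Your proposal is correct and follows essentially the same route as the paper: write the diagonal stationarity equation componentwise, use the dichotomy $z_k=0$ versus $a_k+2\beta|z_k|^2=2\lambda$, solve for $\lambda$ via the sphere constraint $\sum_{k\in\mathcal I}u_k=1$, and substitute back to get the closed form for $u_k$. You add a clean statement of the converse verification and the consistency check that $\lambda=\frac12\mathbf{z}^*A\mathbf{z}+\beta\|\mathbf{z}\|_4^4$, which the paper leaves implicit, but the underlying argument is the same.
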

\begin{proof}
In the diagonal case, introducing the polar form $\mathbf{z}=(r_{1}e^{i\theta_{1}},\dots,r_{n}e^{i\theta_{n}})^{T}$, the first-order optimality conditions reduce to
\begin{align*}
(a_{k}+2\beta r_k^{2} - 2\lambda) r_k e^{i \theta_k} = 0, \quad \forall~k\in[n]
\end{align*}
where $\lambda$ is the associated Lagrange multiplier. Specifically, for all $k \in \mathcal I$, we have $a_{k}+2\beta r_k^{2} = 2\lambda$ and summing these equations, we obtain
\[ \lambda = \frac{2 \beta + \sum_{k \in \mathcal I} a_k}{2|\mathcal I|}, \quad r_k^2 = \frac{2\lambda - a_k}{2\beta}, \]
and $2\lambda - a_k \in (0,2\beta]$ for all $k \in \mathcal I$. The claimed result in Lemma \ref{tho:diag_1} now follows immediately by setting $u_k = r_k^2$, $k \in [n]$.
%
%
\end{proof}

Next, we discuss the local minimizer of problem \eqref{eqn:obj}. By combining Theorems \ref{theorem:glob-suff} and \ref{theorem:char-glob-loc}, we see that there are no spurious local minimizer in the diagonal case, i.e., all local solutions are automatically global solutions of problem \eqref{eqn:obj}.

\begin{theorem}[Characterization of Local Minimizer] \label{theorem:char-glob-loc}
Let $A$ be a diagonal matrix. A point ${\bf z} \in \Cn$ is a local minimizer of problem \eqref{eqn:obj} if and only if
\begin{align}
\grad{f(\bf z)} = 0 \quad \text{and} \quad H = A + 2\beta \diag(|{\bf z}|^2) - 2\lambda I \succeq 0,
\label{eqn:3-7}
\end{align}
where $\lambda \in \R$ is the associated Lagrange multiplier. In addition, every local solution ${\bf z}$ can be represented explicitly and has to satisfy
\[ z_k = \sqrt{u_k} e^{i \theta_k}, \quad \theta_k \in [0,2\pi), \quad {\bf u} = \mathcal P_{\Delta_n}(-{\bf a}/2\beta), \quad \forall~k\in[n], \]
where $\mathcal P_{\Delta_n}$ denotes the Euclidean projection onto the $n$-simplex $\Delta_n$. 
\end{theorem}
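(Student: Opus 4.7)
The plan is to exploit the strongly convex reformulation \eqref{eqn:obj_1} together with the sufficient global optimality result Theorem \ref{theorem:glob-suff}. Sufficiency of \eqref{eqn:3-7} is immediate: at a stationary point, $H \succeq 0$ already implies that $\mathbf{z}$ is a global -- hence local -- minimizer by Theorem \ref{theorem:glob-suff}. The nontrivial direction is necessity of $H \succeq 0$ at a local minimizer, after which the explicit form will follow from the KKT system of \eqref{eqn:obj_1}.

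To prove necessity, I would use the diagonal structure explicitly. Since $A$ is diagonal, so is $H$, with entries $H_{kk} = a_k + 2\beta|z_k|^2 - 2\lambda$. At a stationary point, Lemma \ref{tho:diag_1} gives $H_{kk} = 0$ for every $k \in \mathcal{I} := \{k : z_k \neq 0\}$, so $H \succeq 0$ reduces to $a_{k} \geq 2\lambda$ for every $k \notin \mathcal{I}$. I would prove the contrapositive: if some $k_0 \notin \mathcal{I}$ satisfies $a_{k_0} < 2\lambda$, pick any $j \in \mathcal{I}$ and consider the curve $\mathbf{z}(t) \in \CSn$ defined by $z_{k_0}(t) = t$, $z_j(t) = \sqrt{|z_j|^2 - t^2}\, e^{i\theta_j}$, and $z_k(t) = z_k$ for the remaining indices. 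A direct expansion combined with the stationarity identity $a_j + 2\beta|z_j|^2 = 2\lambda$ collapses all cross terms and yields
\begin{equation*}
f(\mathbf{z}(t)) - f(\mathbf{z}) = \tfrac{t^2}{2}(a_{k_0} - 2\lambda) + \beta t^4,
\end{equation*}
which is strictly negative for all sufficiently small $t > 0$, contradicting local optimality of $\mathbf{z}$.

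Once necessity of \eqref{eqn:3-7} is established, the explicit formula is a short algebraic conclusion. The pair (stationarity, $H \succeq 0$) written in terms of $u_k := |z_k|^2$ reads: $\mathbf{u} \in \Delta_n$; $a_k + 2\beta u_k = 2\lambda$ if $u_k > 0$; and $a_k \geq 2\lambda$ if $u_k = 0$. These are precisely the KKT conditions (with $2\lambda$ as the simplex multiplier) of the strongly convex program \eqref{eqn:obj_1}. Completing the square, \eqref{eqn:obj_1} is equivalent to projecting $-\mathbf{a}/(2\beta)$ onto $\Delta_n$, so its unique minimizer equals $\mathcal{P}_{\Delta_n}(-\mathbf{a}/(2\beta))$, which together with the polar decomposition of $\mathbf{z}$ yields the stated form $z_k = \sqrt{u_k}\, e^{i\theta_k}$.

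The main obstacle is the perturbation argument for necessity: one has to stay on the sphere, produce a leading-order decrease, and control the quartic contribution. The key is that transporting mass from an active index $j$ to an inactive index $k_0$ produces a cross term $-2\beta|z_j|^2 t^2$ in the expansion of $|z_j(t)|^4$, and this combines with the quadratic part of $f$ through the stationarity relation $a_j + 2\beta|z_j|^2 = 2\lambda$ to eliminate the dependence on $j$ and leave the clean expression above. Without diagonality of $A$, off-diagonal entries would couple $j$ and $k_0$ and this cancellation would fail, so the argument is genuinely tied to the diagonal case.
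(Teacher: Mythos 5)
Your proof is correct, and the necessity direction takes a genuinely different route from the paper. The paper invokes the Riemannian second-order necessary condition (Lemma \ref{lemma:son-sos}): at a local minimum one must have $H_f(\mathbf{z})[\mathbf{v}]\geq 0$ for every tangent direction, and plugging in $\mathbf{v}=e_k$ for $k\notin\mathcal{I}$ immediately yields $H_{kk}\geq 0$ from the explicit formula for $H_f$. You instead argue by contrapositive with an explicit one-parameter curve on the sphere that transfers mass from an active index $j\in\mathcal{I}$ to an inactive $k_0$, and show directly that the stationarity identity $a_j+2\beta|z_j|^2=2\lambda$ collapses all the $j$-dependence so that $f(\mathbf{z}(t))-f(\mathbf{z})=\tfrac{t^2}{2}(a_{k_0}-2\lambda)+\beta t^4$, which is negative for small $t$ if $a_{k_0}<2\lambda$. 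Your version is more elementary (no appeal to the abstract Lagrangian/Riemannian second-order machinery) and has the incidental virtue of exhibiting the quartic correction $\beta t^4$ explicitly, so one can see that the condition $a_{k_0}\geq 2\lambda$ is tight; the paper's version is shorter once Lemma \ref{lemma:son-sos} is in hand. For the explicit formula, you go through the KKT system of the strongly convex program \eqref{eqn:obj_1} and identify $2\lambda$ as the simplex multiplier, whereas the paper simply observes that every local minimum is global (by the first half of the theorem) and that global minima of \eqref{eqn:obj} are in bijection with the unique minimizer of \eqref{eqn:obj_1}; these are equivalent, and both the completing-the-square step and the identification with $\mathcal{P}_{\Delta_n}(-\mathbf{a}/2\beta)$ are correct.
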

\begin{proof}
According to Theorem \ref{theorem:glob-suff}, a point satisfying the conditions \eqref{eqn:3-7} is a global minimum of problem \eqref{eqn:obj} and hence, it also a local minimum. Let ${\bf z} \in \CSn$ now be an arbitrary local minimum. Then, the first- and second-order necessary optimality conditions hold at ${\bf z}$, i.e., we have $\grad{f({\bf z})} = 0$ and 
\begin{equation} \label{eq:soc-in-theo} H_f({\bf z})[{\bf v}] = {\bf v}^*H{\bf v} + 4\beta \cdot {\sum}_{k=1}^n r_k^2 t_k^2 \cos^2(\theta_k - \phi_k) \geq 0 \end{equation}
for all ${\bf v} \in \mathcal T_{\bf z}\mathcal M$, where $(r_{1}e^{i\theta_{1}},\dots,r_{n}e^{i\theta_{n}})^{T}$ and $(t_{1}e^{i\phi_{1}},\dots,t_{n}e^{i\phi_{n}})^{T}$ are the corresponding polar coordinates of ${\bf z}$ and ${\bf v}$, respectively.

As shown in Lemma \ref{tho:diag_1} and using the stationarity condition $\grad{f({\bf z})} = 0$, it follows $H_{kk} = a_k + 2\beta |z_k|^2 - 2\lambda = 0$ for all $k \in \mathcal I = \{k: z_k \neq 0\}$. Next, for $k \in \mathcal I^{\sf C}$, we define ${\bf v} := e_k$, where $e_k$ denotes the $k$-th unit vector. This choice of ${\bf v}$ obviously fulfills $\Re({\bf v}^* {\bf z}) = 0$ and thus, the optimality condition \eqref{eq:soc-in-theo} implies $H_{kk} = H_f({\bf z})[{\bf v}] \geq 0$. Since $H$ is diagonal, this yields $H \succeq 0$. 

In order to verify the explicit characterization of local minimizers, we notice that ${\bf u} = \mathcal P_{\Delta_n}(-{\bf a}/2\beta)$ is the unique solution of the strongly convex problem \eqref{eqn:obj_1}.
Moreover, using the identity $u_{k} \equiv \lvert z_{k}\rvert^{2}$, $k \in [n]$, every global solution of \eqref{eqn:obj} corresponds to a global minimizer of the problem \eqref{eqn:obj_1} and vice versa. Since problem \eqref{eqn:obj} does not possess spurious local minimizers, this finishes the proof of Theorem \ref{theorem:char-glob-loc}.
\end{proof}

The latter theorem shows that we can identify and explicitly compute the unique equivalence class $\llbracket {\bf z} \rrbracket$ of global minimizer by a projection onto the $n$-simplex. This can be realized numerically in $O(n\log{n})$ operations, see \cite{finlayson1987numerical}. 

Inspired by the analysis of phase synchronization problems in \cite{bandeira2017tightness}, we now study the behavior of global minimizer when the diagonal matrix $A$ is perturbed by a random noise matrix $W$. 

\begin{theorem} \label{theorem:perturb} Let $A$ be a given diagonal matrix and let $W \in \C^{n\times n}$ be a Hermitian noise matrix with noise level $\sigma > 0$. Suppose that ${\bf z}_0$ is a global minimizer of \eqref{eqn:obj} and that the point ${\bf y} \in \CSn$  
satisfies $f_{\sigma}(\mathbf{y})\leq \min_{{\bf z} \in \llbracket {\bf z}_0 \rrbracket} f_{\sigma}(\mathbf{z})$, where $f_\sigma({\bf z}) := f({\bf z}) + \frac{\sigma}{2} {\bf z}^*W{\bf z}$. Then, it holds that
\begin{align*}
\min_{{\bf z} \in \llbracket {\bf z}_0 \rrbracket}~\lVert\mathbf{y}-\mathbf{z}\rVert_{4} \leq \sqrt[3]{{2\sigma\beta^{-1}\lVert W\rVert_{2}n^{1/4}}}.
\end{align*}
\end{theorem}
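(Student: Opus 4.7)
The plan is to combine the stability identity from the proof of Theorem \ref{theorem:glob-suff} with a scalar inequality that relates the $\ell_4$ discrepancy of $\mathbf{y}$ and $\hat{\mathbf{z}}$ to the $\ell_2$ discrepancy of their componentwise squared moduli. First I would select a distinguished element $\hat{\mathbf{z}} \in \llbracket \mathbf{z}_0 \rrbracket$ whose phases are aligned to $\mathbf{y}$, namely $\hat z_k := |(z_0)_k| e^{\mathrm{i} \arg(y_k)}$ (with arbitrary phase where $y_k = 0$). This choice simultaneously minimizes $\|\mathbf{y} - \mathbf{z}\|_p$ over all $\mathbf{z} \in \llbracket \mathbf{z}_0 \rrbracket$ for every $p \in [1,\infty]$, and yields $|y_k - \hat z_k| = \alpha_k$ where $\alpha_k := \bigl||y_k| - |(z_0)_k|\bigr|$. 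In particular, $\min_{\mathbf{z} \in \llbracket \mathbf{z}_0 \rrbracket} \|\mathbf{y}-\mathbf{z}\|_4 = \|\mathbf{y} - \hat{\mathbf{z}}\|_4$.

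Next, since $\mathbf{z}_0$ is a global minimizer in the diagonal case, Theorem \ref{theorem:char-glob-loc} guarantees $H = A + 2\beta\diag(|\hat{\mathbf{z}}|^2) - 2\lambda I \succeq 0$ (where $|\hat{\mathbf{z}}|^2 = |\mathbf{z}_0|^2$), so the identity \eqref{eq:objective_value} furnishes the lower bound
\begin{equation*}
f(\mathbf{y}) - f(\hat{\mathbf{z}}) = \tfrac{1}{2}\mathbf{y}^* H \mathbf{y} + \tfrac{\beta}{2}\bigl\||\mathbf{y}|^2 - |\hat{\mathbf{z}}|^2\bigr\|_2^2 \geq \tfrac{\beta}{2} \, u, \qquad u := \bigl\||\mathbf{y}|^2 - |\hat{\mathbf{z}}|^2\bigr\|_2^2.
\end{equation*}
On the other hand, the hypothesis $f_{\sigma}(\mathbf{y}) \leq f_{\sigma}(\hat{\mathbf{z}})$ together with $\hat{\mathbf{z}}^*W\hat{\mathbf{z}} - \mathbf{y}^*W\mathbf{y} = \Re[(\hat{\mathbf{z}}-\mathbf{y})^*W(\hat{\mathbf{z}}+\mathbf{y})]$ (using $W = W^*$) and a straightforward Cauchy--Schwarz estimate give
\begin{equation*}
f(\mathbf{y}) - f(\hat{\mathbf{z}}) \leq \tfrac{\sigma}{2}\bigl(\hat{\mathbf{z}}^*W\hat{\mathbf{z}} - \mathbf{y}^*W\mathbf{y}\bigr) \leq \sigma \|W\|_2 \|\hat{\mathbf{z}} - \mathbf{y}\|_2.
\end{equation*}

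Finally, I would close the argument by two elementary estimates on the nonnegative sequence $(\alpha_k)$. Since $(|y_k| + |(z_0)_k|)^2 \geq \alpha_k^2$, expanding $|y_k|^2 - |(z_0)_k|^2 = (|y_k| - |(z_0)_k|)(|y_k|+|(z_0)_k|)$ gives
\begin{equation*}
u = \sum_{k \in [n]} \alpha_k^2 \bigl(|y_k| + |(z_0)_k|\bigr)^2 \geq \sum_{k \in [n]} \alpha_k^4 = \|\mathbf{y} - \hat{\mathbf{z}}\|_4^4,
\end{equation*}
while Cauchy--Schwarz on $\sum_k \alpha_k^2 \cdot 1$ yields $\|\hat{\mathbf{z}} - \mathbf{y}\|_2^2 \leq \sqrt{n}\,\bigl(\sum_k \alpha_k^4\bigr)^{1/2} \leq \sqrt{n}\, u^{1/2}$, hence $\|\hat{\mathbf{z}} - \mathbf{y}\|_2 \leq n^{1/4} u^{1/4}$. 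Chaining all pieces together produces $\tfrac{\beta}{2} u \leq \sigma \|W\|_2 n^{1/4} u^{1/4}$, i.e.\ $u^{3/4} \leq 2\sigma\beta^{-1}\|W\|_2 n^{1/4}$, and then $\|\mathbf{y} - \hat{\mathbf{z}}\|_4^4 \leq u \leq (2\sigma\beta^{-1}\|W\|_2 n^{1/4})^{4/3}$, which is the claim after taking fourth roots.

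The main technical obstacle is balancing the two directions in which $u$ appears: it serves both as a lower bound for the loss $f(\mathbf{y}) - f(\hat{\mathbf{z}})$ (via the quartic part of \eqref{eq:objective_value}) and, indirectly, as an upper bound for $\|\hat{\mathbf{z}} - \mathbf{y}\|_2$ (through Cauchy--Schwarz and the $(a-b)^2 \leq (a+b)^2$ trick). Recognizing that the right substitution uses $u = \||\mathbf{y}|^2 - |\hat{\mathbf{z}}|^2\|_2^2$ rather than the $\ell_2$ norm itself is what produces the exponent $4/3$ and, after the fourth root, the $1/3$ (cube root) power in the stated bound along with the $n^{1/4}$ factor.
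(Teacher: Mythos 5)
Your proof is correct and follows the same skeleton as the paper's: choose the phase-aligned representative $\hat{\bf z}\in\llbracket{\bf z}_0\rrbracket$, use $H\succeq 0$ to lower-bound $f({\bf y})-f(\hat{\bf z})$ by $\tfrac{\beta}{2}\||{\bf y}|^2-|\hat{\bf z}|^2\|_2^2$, upper-bound the same gap through the noise term, and close. The only real difference is how the noise term is estimated: the paper uses the H\"older pairing $|(\mathbf{z}_0-\mathbf{y})^*W(\mathbf{z}_0+\mathbf{y})|\le\|\mathbf{z}_0-\mathbf{y}\|_4\|W(\mathbf{z}_0+\mathbf{y})\|_{4/3}$ together with $\|\cdot\|_{4/3}\le n^{1/4}\|\cdot\|_2$, so the chain terminates directly in the $\ell_4$ norm and the final step is a one-variable comparison $\tfrac{\beta}{2}x^4\le\sigma n^{1/4}\|W\|_2 x$; you instead use Cauchy--Schwarz in $\ell_2$, then invoke $\|\hat{\bf z}-{\bf y}\|_2\le n^{1/4}u^{1/4}$ and $\|\hat{\bf z}-{\bf y}\|_4^4\le u$ to close. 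Both routes reach the identical constant $\sqrt[3]{2\sigma\beta^{-1}\|W\|_2n^{1/4}}$.
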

\begin{proof}
As usual, we introduce the polar coordinates $\mathbf{z}_{0}=(r_{1}e^{i\theta_{1}},...,r_{n}e^{i\theta_{n}})^{T}$ and $\mathbf{y}=(t_{1}e^{i\phi_{1}},...,t_{n}e^{i\phi_{n}})^{T}$. Due to Theorem \ref{theorem:char-glob-loc}, we can assume $\theta_{k}=\phi_{k}$ and we have $a_k + 2\beta r_k^2 - 2\lambda \geq 0$ for all $k \in [n]$, where $a_k = A_{kk}$ and $\lambda$ is the associated multiplier of ${\bf z}_0$.
%
%
Thus, using $f_{\sigma}(\mathbf{y})\leq f_{\sigma}(\mathbf{z}_{0})$, this implies
\begin{align*}
\frac{\sigma}{2}(\mathbf{z}_{0}^{*}W\mathbf{z}_{0}-\mathbf{y}^{*}W\mathbf{y}) & \geq \frac{1}{2}(\mathbf{y}^{*}A\mathbf{y}-\mathbf{z}_{0}^{*}A\mathbf{z}_{0})+\frac{\beta}{2} \cdot {\sum}_{k\in[n]}(t_{k}^{4}-r_{k}^{4})\\
&=\frac{1}{2}\sum_{k\in[n]}(a_{k}+\beta(t_{k}^{2}+r_{k}^{2}))(t_{k}^{2}-r_{k}^{2})\\
& \geq \frac{1}{2}\sum_{k\in[n]}(2\lambda +\beta(t_{k}^{2}-r_{k}^{2}))(t_{k}^{2}-r_{k}^{2}) \\
&=\frac{\beta}{2}\sum_{k\in[n]}(t_{k}^{2}-r_{k}^{2})^{2}\geq\frac{\beta}{2}\sum_{k\in[n]}(t_{k}-r_{k})^{4}=\frac{\beta}{2}\lVert\mathbf{y}-\mathbf{z}_{0}\rVert_{4}^{4}.
\end{align*}
%
Note that the last inequality follows from $(t_{k}+r_{k})^{2}\geq(t_{k}-r_{k})^{2}$. Furthermore, by H\"older's inequality and by $\|{\bf x}\|_{4/3} \leq n^{1/4} \|{\bf x}\|$, we have
\begin{align*}
\mathbf{z}_{0}^{*}W\mathbf{z}_{0}-\mathbf{y}^{*}W\mathbf{y} & =\Re((\mathbf{z}_{0}-\mathbf{y})^{*}W(\mathbf{z}_{0}+\mathbf{y})) \leq \lVert\mathbf{z}_{0}-\mathbf{y}\rVert_{4}\lVert W(\mathbf{z}_{0}+\mathbf{y})\rVert_{4/3} \\ & \leq n^{1/4} \lVert\mathbf{z}_{0}-\mathbf{y}\rVert_{4}\lVert W(\mathbf{z}_{0}+\mathbf{y})\rVert \leq 2n^{1/4} \lVert W\rVert_{2}\lVert\mathbf{z}_{0}-\mathbf{y}\rVert_{4}.
\end{align*}
Combining the above two inequalities, concludes the proof.
\end{proof}
\begin{remark}
If $W \in \C^{n \times n}$ is a Hermitian random matrix with i.i.d. off-diagonal entries following a standard complex normal distribution and with zero diagonal entries, then Bandeira, Boumal, and Singer, \cite{bandeira2017tightness}, have shown that the bound $\lVert W\rVert_{2}\leq3\sqrt{n}$ holds with probability at least $1-2n^{-5/4}-e^{-n/2}$. Combing this observation with Theorem \ref{theorem:perturb}, we can obtain 
\begin{align*}
\min_{{\bf z} \in \llbracket {\bf z}_0 \rrbracket}~\lVert\mathbf{y}-\mathbf{z}_{0}\rVert_{4}\leq\sqrt[3]{6\sigma\beta^{-1}}\cdot n^{1/4}
\end{align*}
with probability at least $1-2n^{-5/4}-e^{-n/2}$.
\end{remark}


\section{Geometric Analysis of the Rank-One Case}
\label{sec:rank1}

In this section, we investigate the case when $A$ is rank-one and positive semidefinite, i.e., we can write $A=\mathbf{a}\mathbf{a}^{*}$ for some $\mathbf{a}\in\mathbb{C}^{n}$ and the quartic-quadratic problem \eqref{eqn:obj} reduces to
%
\begin{align}
\min_{\mathbf{z}\in\mathbb{C}^{n}}~f(\mathbf{z})=\frac{1}{2}\lvert\mathbf{a}^{*}\mathbf{z}\rvert^{2}+\frac{\beta}{2}\lVert\mathbf{z}\rVert_{4}^{4} \quad \mathrm{s.t.} \quad \lVert\mathbf{z}\rVert_{2}=1.
\label{eqn:obj-rank1}
\end{align}
The associated first- and second-order necessary optimality conditions are given by
\[ {\bf a}^*{\bf z} \cdot {\bf a} + 2\beta \diag(|{\bf z}|^2) {\bf z} = 2\lambda {\bf z}, \quad 2\lambda = \lvert\mathbf{a}^{*}\mathbf{z}\rvert^{2}+2\beta\lVert\mathbf{z}\rVert_{4}^{4} \]
and $H_f({\bf z})[{\bf v}] = {\bf v}^*[{\bf a}{\bf a}^* + 2\beta \diag(|{\bf z}|^2) - 2\lambda I]{\bf v} + 4\beta \sum_{k=1}^n \Re(v_k \bar z_k)^2 \geq 0$ 
%
for all $\mathbf{v} \in \Cn$ with $\Re(\mathbf{v}^{*}\mathbf{z})=0$. 
We now present a first structural and preparatory property of local and global minima.
\begin{lemma}
Suppose that $\mathbf{z}$ is a local minimizer of \eqref{eqn:obj-rank1}. Then, for all $k\in[n]$ with $a_{k}=0$ it holds that $\lvert z_{k}\rvert^{2}=\frac{\lambda}{\beta}$.
\label{lem:4-1}
\end{lemma}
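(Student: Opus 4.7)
The argument is essentially a localized combination of the first-order and the second-order necessary optimality conditions, applied to the single coordinate direction indexed by $k$. The first-order condition pins down the modulus of $z_k$ to one of two values, and the second-order condition then rules out the degenerate value $z_k=0$.

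First I would read off the first-order condition componentwise. The stationarity equation $\mathbf{a}^*\mathbf{z}\cdot\mathbf{a}+2\beta\diag(|\mathbf{z}|^2)\mathbf{z}=2\lambda\mathbf{z}$ evaluated in coordinate $k$ with $a_k=0$ reduces to $(2\beta|z_k|^2-2\lambda)z_k=0$, so either $z_k=0$ or $|z_k|^2=\lambda/\beta$. It therefore suffices to exclude the possibility $z_k=0$. Along the way I would note that $\lambda>0$: from \eqref{eq:lambda} we have $\lambda=\tfrac{1}{2}|\mathbf{a}^*\mathbf{z}|^2+\beta\|\mathbf{z}\|_4^4$, and since $\|\mathbf{z}\|_2=1$ forces $\|\mathbf{z}\|_4^4\geq 1/n>0$, both terms are nonnegative and the second is strictly positive.

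To exclude $z_k=0$, suppose for contradiction that $a_k=0$ and $z_k=0$ at a local minimizer $\mathbf{z}$. Choose the test direction $\mathbf{v}=e_k$. Since $z_k=0$ we have $\Re(\mathbf{v}^*\mathbf{z})=\Re(z_k)=0$, so $\mathbf{v}\in\mathcal{T}_{\mathbf{z}}\mathcal{M}$ and Lemma \ref{lemma:son-sos} applies. Plugging into the curvature formula specialized to the rank-one case,
\begin{align*}
H_f(\mathbf{z})[e_k]=|a_k|^2+2\beta|z_k|^2-2\lambda+4\beta\,\Re(\bar{z}_k)^2=-2\lambda,
\end{align*}
because every term apart from $-2\lambda$ vanishes under our hypotheses. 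Since $\lambda>0$, this gives $H_f(\mathbf{z})[e_k]<0$, contradicting the second-order necessary condition. Hence $z_k\neq 0$, and the first-order condition forces $|z_k|^2=\lambda/\beta$.

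I do not expect any real obstacle: the second-order condition is applied in a single "free" coordinate where the quartic does not contribute, so the Hessian along $e_k$ collapses to $-2\lambda$, and positivity of $\lambda$ is immediate from the explicit formula for the multiplier. The only delicate point is bookkeeping the tangency requirement $\Re(\mathbf{v}^*\mathbf{z})=0$, which is automatic for $\mathbf{v}=e_k$ precisely because $z_k=0$ in the case we are trying to exclude.
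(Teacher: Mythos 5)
Your proposal is correct and follows essentially the same argument as the paper: read off the first-order condition in coordinate $k$ to reduce to $z_k=0$ or $|z_k|^2=\lambda/\beta$, then rule out $z_k=0$ by testing the second-order necessary condition along $\mathbf{v}=e_k$, which yields $H_f(\mathbf{z})[e_k]=-2\lambda<0$ since $\lambda\geq\beta\|\mathbf{z}\|_4^4\geq\beta/n>0$. The paper's proof is identical in substance, differing only in presentation.
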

\begin{proof}
If $a_{k}=0$, the first-order optimality conditions imply $\beta\lvert z_{k}\rvert^{2}z_{k}=\lambda z_{k}$.
Let us assume $z_{k}=0$ and let us choose ${\bf v} \in \Cn$ with $v_{k}=1$ and $v_{j}=0$ for all $j\neq k$. Due to $\lambda \geq \beta \|{\bf z}\|_4^4 \geq \frac{\beta}{n} > 0$, we obtain 
\begin{align*}
H_{f}(\mathbf{z})[\mathbf{v}]=-2\lambda<0,
\end{align*}
which contradicts the second-order necessary optimality conditions. Hence, we have $\lvert z_{k}\rvert^{2}=\frac{\lambda}{\beta}$.
\end{proof}

In the following sections, we discuss two different classes of local minima, which are characterized by the orthogonality to the vector $\mathbf{a}$.

\subsection{Orthogonal local minima}

We first analyze the case where the local minimizer $\mathbf{z}$ satisfies $\mathbf{a}^{*}\mathbf{z}=0$.
\begin{theorem}
Suppose that $\mathbf{z}$ is a local minimizer satisfying $\mathbf{a}^{*}\mathbf{z}=0$. Then, $\mathbf{z}$ has at most one zero component and all of its nonzero components must have the same modulus.
\label{thm:4-3}
\end{theorem}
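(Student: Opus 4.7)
The plan is to exploit the first-order optimality conditions to pin down the moduli of the nonzero components of $\mathbf{z}$, and then to use a carefully tailored second-order direction to rule out the existence of two or more zero components.

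First, I would specialize the stationarity condition $\mathbf{a}^*\mathbf{z}\cdot\mathbf{a}+2\beta\diag(|\mathbf{z}|^2)\mathbf{z}=2\lambda\mathbf{z}$ using the assumption $\mathbf{a}^*\mathbf{z}=0$ to get $\beta|z_k|^2 z_k=\lambda z_k$ componentwise. This immediately yields $|z_k|^2=\lambda/\beta$ for every $k$ with $z_k\neq 0$, so all nonzero components share the same modulus. Combined with the multiplier identity $2\lambda=|\mathbf{a}^*\mathbf{z}|^2+2\beta\|\mathbf{z}\|_4^4$, we also obtain $\lambda=\beta\|\mathbf{z}\|_4^4>0$, together with $|\mathcal{I}|\cdot\lambda/\beta=1$ where $\mathcal{I}:=\{k:z_k\neq 0\}$, which will be useful later.

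Next, I would prove the at-most-one-zero claim by contradiction. Assume there exist two distinct indices $i\neq j$ with $z_i=z_j=0$. By Lemma \ref{lem:4-1}, $a_k=0$ would force $|z_k|^2=\lambda/\beta>0$, contradicting $z_i=z_j=0$; hence $a_i,a_j\neq 0$. I would then build a direction $\mathbf{v}=c_i e_i+c_j e_j$ with $c_i:=\bar{a}_j$ and $c_j:=-\bar{a}_i$ (rescaled so $\|\mathbf{v}\|=1$). Since $\mathbf{v}$ is supported on indices where $\mathbf{z}$ vanishes, the tangency condition $\Re(\mathbf{v}^*\mathbf{z})=0$ holds trivially, and by construction $\mathbf{a}^*\mathbf{v}=\bar{a}_i c_i+\bar{a}_j c_j=0$.

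Finally, I would plug this $\mathbf{v}$ into the curvature expression
\[
H_f(\mathbf{z})[\mathbf{v}]=\mathbf{v}^*[\mathbf{a}\mathbf{a}^*+2\beta\diag(|\mathbf{z}|^2)-2\lambda I]\mathbf{v}+4\beta{\sum}_{k=1}^n\Re(v_k\bar z_k)^2.
\]
The terms $\mathbf{v}^*\diag(|\mathbf{z}|^2)\mathbf{v}$ and $\sum_k\Re(v_k\bar z_k)^2$ both vanish because $v_k z_k=0$ for every $k$; and $\mathbf{v}^*\mathbf{a}\mathbf{a}^*\mathbf{v}=|\mathbf{a}^*\mathbf{v}|^2=0$ by our choice. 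This leaves $H_f(\mathbf{z})[\mathbf{v}]=-2\lambda<0$, contradicting the second-order necessary optimality condition stated just before Lemma \ref{lem:4-1}. Therefore at most one component of $\mathbf{z}$ can be zero.

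I do not anticipate a real obstacle here: the key small observation is that Lemma \ref{lem:4-1} rules out the degenerate case $a_i=a_j=0$, so the vector $(\bar{a}_j,-\bar{a}_i)$ is nonzero and can be normalized, yielding a valid tangent direction that simultaneously kills the quadratic, the diagonal-weight, and the real-part-sum contributions to the Riemannian Hessian.
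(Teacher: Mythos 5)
Your proof is correct and follows essentially the same approach as the paper: first-order conditions give the common modulus $|z_k|^2=\lambda/\beta$ on $\mathcal{I}$, and two or more zero entries are ruled out by constructing a tangent direction supported on two zero indices (which by Lemma~\ref{lem:4-1} must have nonzero $a_k$) that annihilates $\mathbf{a}^*\mathbf{v}$, forcing $H_f(\mathbf{z})[\mathbf{v}]=-2\lambda<0$. A small but genuine improvement over the paper's printed proof: your choice $v_i=\bar{a}_j$, $v_j=-\bar{a}_i$ makes $\mathbf{a}^*\mathbf{v}=\bar a_i\bar a_j-\bar a_j\bar a_i=0$ identically, whereas the paper's $v_{n-1}\propto a_n$, $v_n\propto -a_{n-1}$ only gives $\mathbf{a}^*\mathbf{v}=\bar a_{n-1}a_n-\bar a_n a_{n-1}=2i\,\Im(\bar a_{n-1}a_n)$, which need not vanish for complex $\mathbf{a}$ (the intended direction in the paper should also carry conjugates).
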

\begin{proof}
By the first-order optimality conditions, it follows $z_{k}=0$ or $\lvert z_{k}\rvert^{2}=\frac{\lambda}{\beta}$ for all $k$. Hence, all nonzero components of $\mathbf{z}$ have the same modulus.

Without loss of generality we now assume that $\lvert z_{k}\rvert^{2}=\frac{\lambda}{\beta}$ for $1\leq k\leq \tau$ and $z_{k}=0$ for $\tau+1\leq k\leq n$. Due to Lemma \ref{lem:4-1}, we have $a_{n-1},a_{n}\neq0$ if $\tau \leq n-2$. Let us set
\begin{align*}
v_{k} = 0, \quad  k\in[n-2], \quad v_{n-1}=\frac{a_{n}}{\sqrt{\lvert a_{n-1}\rvert^{2}+\lvert a_{n}\rvert^{2}}},\quad v_{n}=\frac{-a_{n-1}}{\sqrt{\lvert a_{n-1}\rvert^{2}+\lvert a_{n}\rvert^{2}}}.
\end{align*}
Then, it holds that $H_{f}(\mathbf{z})[\mathbf{v}]=-2\lambda<0$, which is a contradiction. Thus, we have $\tau = n-1$ or $\tau = n$ (which means that all components $z_k$ are nonzero).
\end{proof}

Next, we derive conditions under which the existence of such local minima can be ensured. Before we present the formal statement and proof of the main theorem, we discuss a result that is used later in Theorem \ref{theorem:exi-rank-1}. 

\begin{lemma}
If $\|{\bf a}\|_\infty \leq\frac{1}{2}\|{\bf a}\|_1$, there exist phases $\{\theta_{k}\}_{k\in[n]}$, $\theta_k \in [0,2\pi]$, such that $\sum_{k\in[n]}e^{i\theta_{k}}a_{k}=0$.
\label{lem:4-3}
\end{lemma}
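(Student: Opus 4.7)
The plan is to reduce the lemma to showing that $n$ unit complex numbers, weighted by the magnitudes $|a_k|$, can be arranged so as to sum to zero (a polygon-closure condition). First, set $b_k := |a_k|$ and observe that $e^{i\theta_k} a_k = u_k b_k$ for the unique unit complex number $u_k := e^{i\theta_k} a_k / b_k$ (when $a_k \neq 0$; any unit number works when $a_k = 0$). Hence it suffices to find unit $u_1, \ldots, u_n$ with $\sum_k u_k b_k = 0$. Indices with $b_k = 0$ contribute nothing and may be dropped without altering $\|{\bf a}\|_\infty$ or $\|{\bf a}\|_1$, so after relabeling I may assume $b_1 \geq b_2 \geq \cdots \geq b_n > 0$ with hypothesis $b_1 \leq \sum_{k \geq 2} b_k$.

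My intended approach is to characterize the image set
\[
S_n := \Bigl\{ {\textstyle\sum_{k=1}^n}\, u_k b_k : |u_k|=1 \Bigr\} \subset \mathbb{C}
\]
and to prove by induction on $n$ that $S_n$ is the closed annulus $\{z \in \mathbb{C} : \rho_n \leq |z| \leq R_n\}$ with outer radius $R_n = \sum_{k=1}^n b_k$ and inner radius $\rho_n = \max(0,\, 2b_1 - R_n)$. The base case $n = 1$ is simply the circle $\{|z| = b_1\}$. In the inductive step the Minkowski sum of the annulus $S_{n-1}$ with the circle $\{|z| = b_n\}$ has outer radius $R_{n-1} + b_n$ (triangle inequality) and inner radius $\max(0,\, \rho_{n-1} - b_n,\, b_n - R_{n-1})$, the last equality following from the reverse triangle inequality together with an intermediate-value argument whenever the circle of radius $b_n$ intersects the previous annulus. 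A short case distinction on the sign of $2b_1 - R_{n-1}$ and on whether $b_n$ lies below, inside, or above $[\rho_{n-1}, R_{n-1}]$ then confirms this coincides with $\max(0,\, 2b_1 - R_n)$.

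The lemma follows immediately: the hypothesis $\|{\bf a}\|_\infty \leq \tfrac{1}{2}\|{\bf a}\|_1$ is exactly $2b_1 \leq R_n$, so $\rho_n = 0$, and $S_n$ equals the closed disk of radius $R_n$. In particular $0 \in S_n$, which yields the desired unit complex numbers $u_k$ and hence the required phases $\theta_k$.

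The main obstacle will be the bookkeeping in the inductive characterization of $S_n$: several sub-cases arise (according to whether $S_{n-1}$ is a genuine annulus or a full disk, and to the position of $b_n$ relative to $\rho_{n-1}$ and $R_{n-1}$), none individually difficult but collectively demanding care. An alternative route that sidesteps the Minkowski-sum description is a direct induction on $n$ in which the two smallest lengths $b_{n-1}, b_n$ are merged into a single effective length $|w| \in [\,b_{n-1}-b_n,\, b_{n-1}+b_n\,]$ chosen so that the reduced $(n-1)$-element tuple $(b_1,\dots,b_{n-2},|w|)$ still obeys the polygon condition; the feasibility of this choice reduces to a short interval-intersection inequality that holds precisely because $b_1 \leq \sum_{k \geq 2} b_k$.
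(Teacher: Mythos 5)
Your proof is correct, and it takes a genuinely different route from the paper. The paper's proof reduces the claim to the closure of a single triangle: assuming $\lvert a_{1}\rvert=\|\mathbf{a}\|_\infty$, it sets $b_{1}=\sum_{k\geq3}\lvert a_{k}\rvert$, $b_{2}=\lvert a_{2}\rvert$, $b_{3}=\lvert a_{1}\rvert$, checks $b_{1}+b_{2}\geq b_{3}$, embeds a (degenerate) triangle with these side lengths in $\mathbb{C}$, and assigns each $a_{k}$ with $k\geq3$ the common direction of one side. Your argument instead characterizes the whole reachable set $S_{n}=\{\sum_{k}u_{k}b_{k}:\lvert u_{k}\rvert=1\}$ as the annulus with outer radius $\sum_{k}b_{k}$ and inner radius $\max(0,\,2\max_{k}b_{k}-\sum_{k}b_{k})$, by induction on Minkowski sums with circles, and then reads off $0\in S_{n}$ directly from the hypothesis. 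The paper's route is shorter when it works, but yours proves a strictly stronger structural statement and is, in fact, more careful: the paper's grouping only verifies one of the three triangle inequalities, and with its specific choice $b_{1}=\sum_{k\geq3}\lvert a_{k}\rvert$ can exceed $b_{2}+b_{3}=\lvert a_{1}\rvert+\lvert a_{2}\rvert$ (take $a_{k}\equiv1$ with $n=5$, so $b_{1}=3>2$), in which case no such triangle exists; a more balanced repartition would be needed to repair that step, whereas your annulus characterization bypasses the issue entirely because the inductive inner-radius formula already encodes all the relevant inequalities. Your alternative reduction that merges the two smallest lengths while preserving the polygon condition is likewise a valid, more combinatorial variant of the same idea.
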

\begin{proof}
Let us assume $a_{k}\neq0$ for all $k$. If $n=1$, the condition $\|{\bf a}\|_\infty \leq\frac{1}{2}\|{\bf a}\|_1$ is never satisfied and hence, the statement in Lemma \ref{lem:4-3} holds automatically. In the case $n=2$, we have $2\max \{ \lvert a_{1}\rvert, |a_2|\} \leq \lvert a_{1}\rvert+\lvert a_{2}\rvert$. This implies $\lvert a_{1}\rvert=\lvert a_{2}\rvert$ and thus, we can choose $\theta_1$ and $\theta_2$ such that $e^{i\theta_{1}}a_{1}=\lvert a_{1}\rvert$ and $e^{i\theta_{2}}a_{2}=-\lvert a_{1}\rvert$.

Otherwise assume $n\geq3$ and $\lvert a_{1}\rvert=\|{\bf a}\|_\infty$. Let $b_{1}=\sum_{k\geq 3}\lvert a_{k}\rvert$, $b_{2}=\lvert a_{2}\rvert$ and $b_{3}=\lvert a_{1}\rvert$. It holds that $b_{1}+b_{2}=\|{\bf a}\|_1-\lvert a_{1}\rvert\geq b_{3}$, which means that the numbers $b_{1},b_{2},b_{3}$ can be interpreted as sides of a (degenerated) triangle. Let $ABC$ be such a triangle embedded into the complex space, where $A,B,C\in\mathbb{C}$ denote the nodes of $ABC$ with $\lvert B-C\rvert=b_{1},\lvert C-A\rvert=b_{2},\lvert A-B\rvert=b_{3}$. Consequently, there exist $\phi_{1},\phi_{2},\phi_{3}\in[0,2\pi)$ such that $B-C=e^{i\phi_{1}}b_{1},C-A=e^{i\phi_{2}}b_{2},A-B=e^{i\phi_{3}}b_{3}$. But then we have $\sum_{k=1}^{3}e^{i\phi_{k}}b_{k}=0$, 
which completes the proof of the lemma.
\end{proof}

\begin{theorem}[Existence of Orthogonal Local Minima] \label{theorem:exi-rank-1}
There exists a local minimizer $\mathbf{z}$ of \eqref{eqn:obj-rank1} such that $\mathbf{a}^{*}\mathbf{z}=0$ if and only if $\|{\bf a}\|_\infty \leq\frac{1}{2}\lVert\mathbf{a}\rVert_{1}$, or $\mathbf{a}$ has only one nonzero component and we have $\lVert\mathbf{a}\rVert^{2}\geq2\beta/(n-1)$. Further, if $\mathbf{a}$ satisfies such conditions, all local minima with $\mathbf{a}^{*}\mathbf{z}=0$ are the only global minima of \eqref{eqn:obj-rank1}.
\end{theorem}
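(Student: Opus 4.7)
My plan is to prove both directions by combining the structural characterization of Theorem~\ref{thm:4-3} with the sufficient condition for global optimality of Theorem~\ref{theorem:glob-suff}.

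For the forward direction, let $\mathbf{z}$ be a local minimizer with $\mathbf{a}^{*}\mathbf{z}=0$. Theorem~\ref{thm:4-3} tells us that $\mathbf{z}$ has at most one zero coordinate and that the remaining coordinates share a common modulus, so I split into two subcases. If no coordinate vanishes, then $|z_{k}|^{2}=1/n$ and, writing $z_{k}=n^{-1/2}e^{i\theta_{k}}$, the identity $\mathbf{a}^{*}\mathbf{z}=0$ reads $\sum_{k}e^{i\theta_{k}}\bar{a}_{k}=0$. A reverse-triangle-inequality argument (the converse of Lemma~\ref{lem:4-3}) then forces $\|\mathbf{a}\|_{\infty}\leq\tfrac{1}{2}\|\mathbf{a}\|_{1}$: otherwise some $k^{*}$ would satisfy $|a_{k^{*}}|>\sum_{k\neq k^{*}}|a_{k}|$ and $|\sum_{k}e^{i\theta_{k}}\bar{a}_{k}|\geq|a_{k^{*}}|-\sum_{k\neq k^{*}}|a_{k}|>0$.

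The second subcase has exactly one zero coordinate, say $z_{n}=0$ after relabeling, so that $|z_{k}|^{2}=1/(n-1)$ for $k<n$; Lemma~\ref{lem:4-1} forces $a_{n}\neq 0$, and the stationarity equation combined with $\mathbf{a}^{*}\mathbf{z}=0$ yields $\lambda=\beta/(n-1)$. Testing $\mathbf{v}=e_{n}\in\mathcal{T}_{\mathbf{z}}\mathcal{M}$ in the second-order necessary condition gives $|a_{n}|^{2}\geq 2\beta/(n-1)$ at once. To preclude any further nonzero entry of $\mathbf{a}$, I will suppose $a_{j}\neq 0$ for some $j<n$ and test the direction $\mathbf{v}=c_{1}e_{n}+c_{2}e_{j}$ with $c_{2}=-it\,e^{i\arg z_{j}}$, $t\in\R$; this choice guarantees both $\Re(\mathbf{v}^{*}\mathbf{z})=0$ and $\Re(c_{2}\bar{z}_{j})=0$, which collapses the quartic correction term $4\beta\sum_{k}\Re(v_{k}\bar{z}_{k})^{2}$ and leaves
\[
H_{f}(\mathbf{z})[\mathbf{v}]=\bigl|\bar{a}_{n}c_{1}+\bar{a}_{j}c_{2}\bigr|^{2}-\frac{2\beta}{n-1}|c_{1}|^{2}.
\]
Aligning the phase of $c_{1}$ with that of $i\bar{a}_{j}e^{i\arg z_{j}}/\bar{a}_{n}$ and choosing $t=|c_{1}||a_{n}|/|a_{j}|$ drives the first term to zero, producing a strictly negative curvature direction and contradicting local optimality. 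Consequently $\mathbf{a}$ has exactly one nonzero entry, and $\|\mathbf{a}\|^{2}\geq 2\beta/(n-1)$.

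For the converse and the uniqueness claim, under $\|\mathbf{a}\|_{\infty}\leq\tfrac{1}{2}\|\mathbf{a}\|_{1}$ I will apply Lemma~\ref{lem:4-3} to $\bar{\mathbf{a}}$ to obtain phases with $\sum_{k}e^{i\theta_{k}}\bar{a}_{k}=0$ and set $z_{k}=n^{-1/2}e^{i\theta_{k}}$: then $\lambda=\beta/n$ and $H=\mathbf{a}\mathbf{a}^{*}+(2\beta/n)I-(2\beta/n)I=\mathbf{a}\mathbf{a}^{*}\succeq 0$. If instead $\mathbf{a}$ has a single nonzero entry $a_{n}$ with $|a_{n}|^{2}\geq 2\beta/(n-1)$, picking any $\mathbf{z}$ with $z_{n}=0$ and $|z_{k}|=1/\sqrt{n-1}$ for $k<n$ makes $H$ diagonal with $H_{nn}=|a_{n}|^{2}-2\beta/(n-1)\geq 0$ and $H_{kk}=0$ otherwise, so again $H\succeq 0$. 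In both cases Theorem~\ref{theorem:glob-suff} declares $\mathbf{z}$ globally optimal and confines every global minimizer to $\llbracket\mathbf{z}\rrbracket$; since every $\mathbf{y}\in\llbracket\mathbf{z}\rrbracket$ has the same value of $\|\cdot\|_{4}^{4}$ as $\mathbf{z}$, such a $\mathbf{y}$ is globally optimal only if $\mathbf{a}^{*}\mathbf{y}=0$. The forward analysis in turn shows that every local minimum with $\mathbf{a}^{*}\mathbf{z}=0$ shares the modulus profile and hence the objective value of the constructed $\mathbf{z}$, so it is itself global, delivering the final equivalence. The principal technical obstacle is the one-zero-coordinate subcase: the quartic correction and the Lagrange-multiplier contribution usually interact in a non-obvious way, and it is precisely the tangency choice $c_{2}\in\R\cdot(-i)e^{i\arg z_{j}}$ that annihilates the $j$-th summand of the correction and reduces the question to the two-parameter minimization above; everything else is either a direct invocation of Theorems~\ref{thm:4-3} and~\ref{theorem:glob-suff} or routine modulus/phase bookkeeping.
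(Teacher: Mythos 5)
Your proof is correct, and its forward direction tracks the paper's closely: both begin from Theorem~\ref{thm:4-3} to split into the no-zero and one-zero-component subcases, and both handle the first subcase by the same reverse-triangle-inequality observation on $\sum_k e^{i\theta_k}\bar{a}_k = 0$. The substantive differences are twofold. In the one-zero subcase, you test a sparse negative-curvature direction supported only on coordinates $\{j,n\}$, with $c_2 = -it e^{i\arg z_j}$ chosen so that $\Re(\mathbf{v}^*\mathbf{z})=0$ and the quartic correction vanishes, and $c_1$ tuned to annihilate $\mathbf{a}^*\mathbf{v}$; the paper instead perturbs $i\mathbf{z}$ on its full support, choosing special weights at indices $k$ and $n$ so that $\mathbf{a}^*\mathbf{v}=0$ and every $\Re(v_\ell\bar z_\ell)$ vanishes. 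Both constructions arrive at $H_f(\mathbf{z})[\mathbf{v}] = -2\lambda|v_n|^2 < 0$, but your two-coordinate perturbation is the more economical of the two. For the converse and the uniqueness claim, you invoke Theorem~\ref{theorem:glob-suff} uniformly in both branches by checking $H\succeq 0$ at the candidate point ($H=\mathbf{a}\mathbf{a}^*$ in the first branch, $H = \diag(0,\dots,0,|a_n|^2 - 2\beta/(n-1))$ in the second). The paper instead establishes the first branch by matching $f(\mathbf{z})=\beta/(2n)$ to the universal lower bound $\tfrac{\beta}{2}\|\mathbf{z}\|_4^4\geq\tfrac{\beta}{2n}$, and delegates the single-nonzero-entry branch to the diagonal-case analysis of Theorem~\ref{theorem:char-glob-loc}. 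Your route is tidier and delivers the ``only global minima'' conclusion essentially for free, since Theorem~\ref{theorem:glob-suff} already confines all global minima to $\llbracket \mathbf{z}\rrbracket$, within which global optimality reduces to $\mathbf{a}^*\mathbf{y}=0$. One small point you handle more carefully than the paper: to get $\mathbf{a}^*\mathbf{z}=0$ you apply Lemma~\ref{lem:4-3} to $\bar{\mathbf{a}}$, whereas the paper writes $\sum_k e^{i\theta_k}a_k=0$ and then asserts $\mathbf{a}^*\mathbf{z}=0$ for $z_k=e^{i\theta_k}/\sqrt{n}$, which implicitly requires the conjugation (or a sign flip on the phases) that you make explicit.
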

\begin{proof}
Let $\mathbf{z}=(r_{1}e^{i\theta_{1}},\dots,r_{n}e^{i\theta_{n}})^{T}$ be a local minimizer of problem (\ref{eqn:obj-rank1}) such that $\mathbf{a}^{*}\mathbf{z}=0$. By Theorem \ref{thm:4-3}, we only need to consider the cases when the local minimizer has no zero component or exactly one zero component.

\textit{Case 1.} If $\mathbf{z}$ does not have any zero component, then it follows $\lvert z_{k}\rvert^{2}=\frac{1}{n}$ for all $k$ and we have
\begin{align*}
\mathbf{a}^{*}\mathbf{z}=\frac{1}{\sqrt{n}}\sum_{k\in[n]}e^{i\theta_{k}}a_{k}=0,
\end{align*}
which implies $\lvert a_{j}\rvert=\lvert\sum_{k\neq j}e^{i\theta_{k}}a_{k}\rvert\leq\sum_{k\neq j}\lvert a_{k}\rvert=\lVert\mathbf{a}\rVert_{1}-\lvert a_{j}\rvert$ for all $j \in[n]$. Choosing $a_{j}$ to be the element with maximal modulus, we get $ \|{\bf a}\|_\infty \leq\frac{1}{2}\lVert\mathbf{a}\rVert_{1}$.

\textit{Case 2.} Let us suppose $z_{n} = 0$. Then, due to Lemma \ref{lem:4-1}, we obtain $a_{n}\neq0$. Let us assume that there exists another component $a_{k}\neq0$ for some $k \in [n-1]$. Setting
\begin{align*}
v_{j}=iz_{j}, \quad j\neq k,n,\quad v_{k}=(1-\lvert a_{n}\rvert)iz_{k},\quad v_{n}= \frac{a_{n}}{\lvert a_{n}\rvert}\bar{a}_{k}\cdot iz_{k},
\end{align*}
and normalizing $\mathbf{v}$, we have $\mathbf{a}^{*}\mathbf{v} = \Re({\bf z}^*{\bf v})=0$ and the curvature is given by
\begin{align*}
H_{f}(\mathbf{z})[\mathbf{v}]=2\beta\sum_{k\in[n-1]} |z_k|^2 \lvert v_{k}\rvert^{2}-2\lambda=-2\lambda\lvert v_{n}\rvert^{2}<0,
\end{align*}
which contradicts with the second-order optimality conditions. Hence, we can infer $a_{k}=0$ for all $k\in[n-1]$, which implies that $\mathbf{a}$ has only one nonzero component. By Theorem \ref{thm:4-3}, we have $\lvert z_{k}\rvert^{2}=\frac{1}{n-1}$ for all $k\in[n-1]$. The second-order necessary optimality conditions yields
\begin{align*}
H_{f}(\mathbf{z})[\mathbf{v}]=\lvert a_{n}v_{n}\rvert^{2}+\frac{2\beta}{n-1}(1-\lvert v_{n}\rvert^{2})-\frac{2\beta}{n-1}=\left(\lvert a_{n}\rvert^{2}-\frac{2\beta}{n-1}\right)\lvert v_{n}\rvert^{2}\geq0,
\end{align*}
for $|v_n| \in [0,1]$ and it follows $\lVert\mathbf{a}\rVert^{2}=\lvert a_{n}\rvert^{2}\geq\frac{2\beta}{n-1}$.

We continue with the proof of the second direction. In particular, suppose that $\mathbf{a}$ satisfies the conditions stated in Theorem \ref{theorem:exi-rank-1}. We again discuss two cases.

\textit{Case 1.} By Lemma \ref{lem:4-3}, if $\|{\bf a}\|_\infty \leq\frac{1}{2}\lVert\mathbf{a}\rVert_{1}$, we can choose phases $\{\theta_{k}\}_{k\in[n]}$ such that $\sum_{k\in[n]}e^{i\theta_{k}}a_{k}=0$. Let us set $z_{k}= e^{i\theta_{k}} / {\sqrt{n}}$ for all $k\in[n]$. Then, we have $\mathbf{a}^{*}\mathbf{z}=0$ and $f(\mathbf{z})= {\beta}/{(2n)}$, which is the lower bound of the objective function $f$. Thus, in this case $\mathbf{z}$ is a global minimizer of (\ref{eqn:obj-rank1}). Moreover, the objective function attains its optimal value if and only if $\mathbf{a}^{*}\mathbf{z}=0$ and $\|{\bf z}\|_{4}^4={1}/{n}$.

\textit{Case 2.} If $\mathbf{a}$ has only one nonzero component $a_{n}$ with $\lVert\mathbf{a}\rVert^{2}\geq\frac{2\beta}{n-1}$, the matrix $A$ is diagonal and we can apply the results derived in section 3. Specifically, by Theorem \ref{theorem:char-glob-loc}, it can be shown that all local minimizer are global minimizer and satisfy
\[ \lvert z_{k}\rvert=\frac{1}{\sqrt{n-1}}, \quad \forall~k\in[n-1], \quad \text{and} \quad z_{n}=0. \]
This finishes the proof of Theorem \ref{theorem:exi-rank-1}.
\end{proof}

\subsection{Non-orthogonal local minima}

We discuss the case when there is no local minimizer $\mathbf{z}$ such that $\mathbf{a}^{*}\mathbf{z}=0$, or, equivalently, $\mathbf{a}$ does not satisfy the conditions in Theorem \ref{thm:4-3}. By the first-order optimality conditions, all $z_{k}$ with $a_{k}\neq0$ satisfy
\begin{align*}
\mathrm{arg}(z_{k})-\mathrm{arg}(a_{k})=\mathrm{arg}(\mathbf{a}^{*}\mathbf{z})=\mathrm{const},
\end{align*}
where $\mathrm{arg}(z)$ is the principal angle of the complex number $z$ modulo $\pi$. Since a global shift of the phase will not change the objective function value and the first-order optimality conditions, we can shift $\mathbf{z}$ by a global phase such that the principal angles of the nonzero components are the same as $a_{k}$. In the case $a_{k}=0$, the phase of $z_{k}$ does not influence the objective function value and the first-order optimality conditions and we can adjust ${\bf z}$ to be a real number. Consequently, for every stationary point of problem \eqref{eqn:obj-rank1}, we can find a corresponding stationary point which has the same objective function value and satisfies the following  `consistency' property.
\begin{definition}
A stationary point $\mathbf{z}$ of problem \eqref{eqn:obj-rank1} is called consistent, if it satisfies
\begin{align*}
\bar a_{k} z_{k}\in\mathbb{R}, \quad \forall~k \; \text{with} \; a_{k}\neq0 \quad \text{and} \quad z_{k} \in\mathbb{R}, \quad \forall~k \; \text{with} \; a_{k}=0.
\end{align*}
\end{definition}

Note that the corresponding consistent stationary point of a local minimizer of problem \eqref{eqn:obj-rank1} does not need to be a local minimizer. On the other hand, shifted consistent stationary points of global minimizer remain global minimizer. In this subsection, we focus on structural properties of consistent stationary points of \eqref{eqn:obj-rank1}.

\begin{remark}
Suppose $\mathbf{a}$ does not satisfy the conditions in Theorem \ref{thm:4-3} and there exists a local minimizer $\mathbf{z}$ such that $\mathbf{a}^{*}\mathbf{z}\neq0$. If we have $z_{k}=0$ for some $k\in[n]$, then the first-order optimality conditions imply  $a_{k}=0$ which contradicts Lemma \ref{lem:4-1}. Hence, for any local minima $\mathbf{z}$ with $\mathbf{a}^{*}\mathbf{z}\neq0$, we have $z_{k}\neq0$ for all $k\in[n]$.
\end{remark}

In the following result, we show that consistent local minima must belong to the same equivalence class defined in \eqref{eq:equi-class}.
\begin{theorem}
Suppose that $\mathbf{z},\mathbf{y}\in\mathbb{CS}^{n-1}$ are two consistent local minima of problem \eqref{eqn:obj-rank1} with $\mathbf{a}^{*}\mathbf{z}\neq0$ and $\mathbf{a}^{*}\mathbf{y}\neq0$. Then, we have ${\bf y} \in \llbracket {\bf z} \rrbracket$.
\end{theorem}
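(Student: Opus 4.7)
The plan is to use Theorem~\ref{theorem:glob-suff} as a reduction. Concretely, I will show that the second-order necessary condition forces the Hermitian matrix
\[
H := A + 2\beta \diag(|\mathbf{z}|^{2}) - 2\lambda I
\]
to be positive semidefinite, and likewise for $\mathbf{y}$. Theorem~\ref{theorem:glob-suff} then immediately yields that both $\mathbf{z}$ and $\mathbf{y}$ are global minima of \eqref{eqn:obj-rank1} and that every global minimum lies in $\llbracket \mathbf{z} \rrbracket$; in particular $\mathbf{y} \in \llbracket \mathbf{z} \rrbracket$.

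For the key step, first observe that the remark preceding the theorem gives $z_{k}\neq 0$ for every $k$ (and similarly for $\mathbf{y}$). For $\mathbf{q}\in\mathbb{R}^{n}$ I would introduce the direction $\mathbf{v}=\mathbf{v}(\mathbf{q})\in\mathbb{C}^{n}$ by $v_{k}=iq_{k}\hat{a}_{k}$, where $\hat{a}_{k}=a_{k}/|a_{k}|$ if $a_{k}\neq 0$ and $\hat{a}_{k}=1$ otherwise. Using the consistency of $\mathbf{z}$ (so $\bar{a}_{k}z_{k}\in\mathbb{R}$ for $a_{k}\neq 0$ and $z_{k}\in\mathbb{R}$ for $a_{k}=0$), one checks that $v_{k}\bar{z}_{k}\in i\mathbb{R}$ for every $k$, hence $\mathbf{v}(\mathbf{q})\in\mathcal{T}_{\mathbf{z}}\mathcal{M}$ and the quartic correction $4\beta\sum_{k}\Re(v_{k}\bar{z}_{k})^{2}$ in the curvature formula vanishes identically.

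Feeding $\mathbf{v}(\mathbf{q})$ into Lemma~\ref{lemma:son-sos}, the second-order necessary condition collapses to $\mathbf{v}^{*}H\mathbf{v}\geq 0$. Because $|\mathbf{a}^{*}\mathbf{v}|^{2}=(\mathbf{b}^{T}\mathbf{q})^{2}$ with $\mathbf{b}:=|\mathbf{a}|$ and $|v_{k}|=|q_{k}|$, this rewrites as $\mathbf{q}^{T}K\mathbf{q}\geq 0$ for every $\mathbf{q}\in\mathbb{R}^{n}$, where
\[
K := \mathbf{b}\mathbf{b}^{T}+2\beta\diag(|\mathbf{z}|^{2})-2\lambda I
\]
is real symmetric; thus $K\succeq 0$. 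To upgrade this to $H\succeq 0$, I would use the change of variables $w_{k}=(p_{k}+iq_{k})\hat{a}_{k}$, which produces the identity $\mathbf{w}^{*}H\mathbf{w}=\mathbf{p}^{T}K\mathbf{p}+\mathbf{q}^{T}K\mathbf{q}$ for every $\mathbf{w}\in\mathbb{C}^{n}$, so $K\succeq 0$ is equivalent to $H\succeq 0$.

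Theorem~\ref{theorem:glob-suff} then furnishes $\mathbf{z}$ as a global minimum of \eqref{eqn:obj-rank1} with every other global minimum in $\llbracket\mathbf{z}\rrbracket$; the identical argument at $\mathbf{y}$ shows $\mathbf{y}$ is also a global minimum, so $\mathbf{y}\in\llbracket\mathbf{z}\rrbracket$. The delicate point is the choice of the tangent family $\mathbf{v}(\mathbf{q})$: a generic tangent direction would only allow one to bound $\mathbf{v}^{*}H\mathbf{v}$ from below by $-4\beta\sum_{k}\Re(v_{k}\bar z_{k})^{2}$, which is too weak; the family $\mathbf{v}(\mathbf{q})$ is chosen so as to be both tangent and to annihilate each $\Re(v_{k}\bar z_{k})$, exactly exploiting the consistency assumption, and carries $n$ real parameters so that sweeping $\mathbf{q}$ yields positive semidefiniteness of $K$ and hence of $H$.
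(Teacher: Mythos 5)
Your proof is correct, but it takes a genuinely different route from the paper's. The paper tests the second-order necessary condition at just two directions: it observes that consistency forces ${\bf y}^*{\bf z}\in\R$, so $i{\bf y}\in\mathcal{T}_{\bf z}\mathcal{M}\cap\CSn$ and $i{\bf z}\in\mathcal{T}_{\bf y}\mathcal{M}\cap\CSn$; evaluating $H_f({\bf z})[i{\bf y}]\geq 0$ and $H_f({\bf y})[i{\bf z}]\geq 0$ and \emph{summing} the two inequalities produces $-2\beta\sum_k(|z_k|^2-|y_k|^2)^2\geq 0$ directly, with no appeal to Theorem~\ref{theorem:glob-suff}. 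You instead sweep an $n$-real-parameter family of tangent directions ${\bf v}({\bf q})$ that annihilates every term $\Re(v_k\bar z_k)$, reduce the second-order condition to $K={\bf b}{\bf b}^T+2\beta\diag(|{\bf z}|^2)-2\lambda I\succeq 0$ over $\R^n$, and then promote this to $H\succeq 0$ over $\C^n$ via the change of variables $w_k=(p_k+iq_k)\hat a_k$, after which Theorem~\ref{theorem:glob-suff} does the rest. Your identity $\mathbf{w}^*H\mathbf{w}=\mathbf{p}^TK\mathbf{p}+\mathbf{q}^TK\mathbf{q}$ checks out, the phase-cancellation argument relying on consistency is the right idea, and the surjectivity of $\mathbf{w}$ is immediate since $|\hat a_k|=1$. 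The trade-off: the paper's route is shorter and symmetric, but you obtain a stronger intermediate certificate ($H\succeq 0$ at every consistent local minimum with $\mathbf{a}^*\mathbf{z}\neq 0$), which simultaneously proves the next theorem in the paper --- that such consistent local minima are in fact global --- whereas the paper handles that in a separate step. One cosmetic note: the appeal to the remark that $z_k\neq 0$ for all $k$ is never actually used in your argument and can be dropped.
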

\begin{proof} The consistency of the stationary points ${\bf y}$ and ${\bf z}$ implies ${\bf y}^*{\bf z} \in \R$ and thus, it holds that $i\mathbf{y} \in\mathcal{T}_{\mathbf{z}}\mathcal{M}\cap\mathbb{CS}^{n-1}$ and $i\mathbf{z}\in\mathcal{T}_{\mathbf{y}}\mathcal{M}\cap\mathbb{CS}^{n-1}$. By the second-order necessary optimality conditions, we have
\begin{align}
\nonumber H_{f}(\mathbf{z})[i\mathbf{y}]&=\lvert\mathbf{a}^{*}\mathbf{y}\rvert^{2}+2\beta\sum_{k\in[n]}\lvert z_{k} y_{k}\rvert^{2}-2\lambda_\mathbf{z} \geq 0,\\
H_{f}(\mathbf{y})[i\mathbf{z}]&=\lvert\mathbf{a}^{*}\mathbf{z}\rvert^{2}+2\beta\sum_{k\in[n]}\lvert z_{k} y_{k}\rvert^{2}-2\lambda_\mathbf{y} \geq0,
\label{eqn:4-10}
\end{align}
where $2\lambda_{\bf z} = |{\bf a}^*{\bf z}|^2 + 2\beta \|{\bf z}\|_4^4$ and $2\lambda_{\bf y} = |{\bf a}^*{\bf y}|^2 + 2\beta \|{\bf y}\|_4^4$. Summing those two inequalities yields
\begin{align*}
 0 & \leq - 2\beta \left[ \|{\bf z}\|_4^4 - 2 \sum_{k\in[n]}\lvert z_{k} y_{k}\rvert^{2} + \|{\bf y}\|_4^4 \right] = -2\beta \sum_{k \in [n]} [|z_k|^2 - |y_k|^2]^2.
 \end{align*}
%
Hence, we have $\lvert z_{k}\rvert =\lvert y_{k}\rvert$ for all $k\in[n]$.
\end{proof}
\begin{remark}
Similarly, if $\mathbf{z},\mathbf{y}\in\mathbb{CS}^{n-1}$ are two local minima of problem \eqref{eqn:obj-rank1} such that $z_{k}$ and $y_{k}$ have the same phases for all $k\in[n]$, then ${\bf y} \in \llbracket {\bf z} \rrbracket$.
\end{remark}

We now prove that there are no spurious consistent local minima.
\begin{theorem}
If $\mathbf{a}$ does not satisfy the conditions in Theorem \ref{thm:4-3}, then all the consistent local minima of problem \eqref{eqn:obj-rank1} are global minima.
\end{theorem}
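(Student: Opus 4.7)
The plan is to combine the preceding theorem (any two consistent local minima with $\mathbf{a}^{*}\mathbf{z}\neq 0$ lie in the same equivalence class $\llbracket\cdot\rrbracket$) with a second-order perturbation argument whose test direction is built from a specifically constructed \emph{consistent} global minimum.

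First, I will produce a consistent global minimum $\mathbf{w}$ of \eqref{eqn:obj-rank1}. Compactness of $\CSn$ and continuity of $f$ guarantee that a global minimum exists; by the standing hypothesis no local minimum is orthogonal to $\mathbf{a}$, so $\mathbf{a}^{*}\mathbf{w}\neq 0$, and by the Remark preceding this theorem $w_k\neq 0$ for every $k$. A global phase shift $\mathbf{w}\mapsto e^{i\phi}\mathbf{w}$ renders $\mathbf{a}^{*}\mathbf{w}$ a positive real number, after which the first-order condition forces $\bar a_k w_k\in\mathbb{R}$ for every $k$ with $a_k\neq 0$. For indices with $a_k=0$ the phase of $w_k$ influences neither $f$ nor the first-order conditions, so each such $w_k$ may be rotated to a real number without destroying global (hence local) minimality. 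The resulting $\mathbf{w}$ is a consistent global minimum.

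Now let $\mathbf{z}$ be an arbitrary consistent local minimum; the standing hypothesis forces $\mathbf{a}^{*}\mathbf{z}\neq 0$, so the preceding theorem applied to the pair $(\mathbf{z},\mathbf{w})$ yields $|z_k|=|w_k|$ for every $k$, and in particular $\|\mathbf{z}\|_4^4=\|\mathbf{w}\|_4^4$. Consistency of both points makes every product $w_k\bar z_k$ real, so $\mathbf{w}^{*}\mathbf{z}\in\mathbb{R}$ and $\mathbf{v}:=i\mathbf{w}\in\mathcal{T}_\mathbf{z}\mathcal{M}\cap\CSn$. A direct calculation, in which the equal moduli annihilate the quartic and multiplier terms and $\Im(w_k\bar z_k)=0$ kills the $4\beta\sum_k\Re(v_k\bar z_k)^{2}$ perturbation term, will yield
\[H_f(\mathbf{z})[i\mathbf{w}]=|\mathbf{a}^{*}\mathbf{w}|^{2}-|\mathbf{a}^{*}\mathbf{z}|^{2}.\]

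The second-order necessary condition of Lemma \ref{lemma:son-sos} applied at the local minimum $\mathbf{z}$ then gives $|\mathbf{a}^{*}\mathbf{w}|^{2}\geq|\mathbf{a}^{*}\mathbf{z}|^{2}$, while the global optimality of $\mathbf{w}$ combined with $\|\mathbf{z}\|_4^4=\|\mathbf{w}\|_4^4$ forces the reverse inequality $|\mathbf{a}^{*}\mathbf{w}|^{2}\leq|\mathbf{a}^{*}\mathbf{z}|^{2}$. Equality in both then gives $f(\mathbf{z})=f(\mathbf{w})$, so $\mathbf{z}$ is a global minimum. The one delicate point is the first step: one must verify that the global phase shift together with the subsequent independent rotations of the coordinates with $a_k=0$ simultaneously preserve global (and hence local) minimality and produce consistency. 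Once that is granted, the choice $\mathbf{v}=i\mathbf{w}$ is precisely what transports global-optimality information of $\mathbf{w}$ into a tangent-space curvature inequality at $\mathbf{z}$, and the remaining computation is routine.
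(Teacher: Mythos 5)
Your proof is correct and follows essentially the same route as the paper: both hinge on the preceding theorem (giving $|z_k|=|w_k|$), both use the tangent direction $i\mathbf{w}$ at $\mathbf{z}$ and the cancellation of the quartic and multiplier terms to reduce the curvature to $|\mathbf{a}^{*}\mathbf{w}|^{2}-|\mathbf{a}^{*}\mathbf{z}|^{2}$, and both invoke the existence of a consistent global minimizer. The only cosmetic difference is that the paper applies the second-order necessary condition at both points of an arbitrary pair of consistent local minima to conclude all such points share the same objective value and then cites the consistent global minimizer, whereas you fix a consistent global minimizer $\mathbf{w}$ up front and replace one of the two second-order applications with the reverse inequality coming directly from $f(\mathbf{w})\leq f(\mathbf{z})$ after canceling $\|\mathbf{w}\|_4^4=\|\mathbf{z}\|_4^4$.
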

\begin{proof}
Suppose $\mathbf{z},\mathbf{y}\in\mathbb{CS}^{n-1}$ are two consistent local minima. Using the inequalities in \eqref{eqn:4-10} and $\lvert z_{k}\rvert = \lvert y_{k}\rvert$, we obtain $\lvert\mathbf{a}^{*}\mathbf{z}\rvert^{2}=\lvert\mathbf{a}^{*}\mathbf{y}\rvert^{2}$. Hence, all consistent local minima have the same objective function value. Since $\mathbf{a}$ does not satisfy the conditions in Theorem $\ref{thm:4-3}$, there exists a consistent global minimizer satisfying $\mathbf{a}^{*}\mathbf{z}\neq0$. This shows that all consistent local minima are global minima. 
\end{proof}
\begin{remark}
Combining the results of the last two subsections, we see that global minima of problem \eqref{eqn:obj-rank1} are unique up to certain shifts in the phase. In particular, we can shift the phases of components with $(\mathbf{a}^{*}\mathbf{z})a_{k}=0$ arbitrarily and shift all the other components by the same angle.
\end{remark}

%

\section{Analyzing the Geometric Landscape -- the Real Case}
\label{sec:strict_saddle}

We now investigate a variant of the so-called \textit{strict-saddle property} introduced by Ge, Jin, and Zheng in \cite[Definition 2]{ge2017no}. More specifically, as in \cite[Theorem 2.2]{sun2016geometric}, we strengthen the first condition in \cite[Definition 2]{ge2017no} to uniform positive definiteness of the Riemannian Hessian.
\begin{definition} \label{def:ssp}
Let $\xi,\epsilon,\zeta>0$ be given constants. A function $f$ is called $(\xi,\epsilon,\zeta)$-\textbf{strict-saddle} if for all $\mathbf{z}\in\mathcal{M}$ one of the following conditions holds:
\begin{itemize}
  \item[1.](Strong convexity). For all $\mathbf{v}\in\mathcal{T}_{\mathbf{z}}\mathcal{M}\cap\mathbb{S}^{n-1}$ we have $H_{f}(\mathbf{z})[\mathbf{v}]\geq\xi$.
  \item[2.](Large gradient). It holds that $\lVert\grad{f(\mathbf{z})}\rVert\geq\epsilon$.
  \item[3.](Negative curvature). There exists $\mathbf{v}\in\mathcal{T}_{\mathbf{z}}\mathcal{M}\cap\mathbb{S}^{n-1}$ with $H_{f}(\mathbf{z})[\mathbf{v}]\leq-\zeta$.
\end{itemize}
\end{definition}

The strict-saddle property can be utilized to establish polynomial time convergence rates of second-order optimization algorithms, such as the  Riemannian trust region method \cite{sun2016geometric}, and almost sure convergence to local minimizer of Riemannian gradient descent methods with random initialization, see, e.g., \cite{lee2016gradient,lee2017first-order}.

In the following sections, we analyze the geometric landscape of problem \eqref{eqn:obj} and show that the strict-saddle property is satisfied in the real case when the interaction coefficient $\beta$ is sufficiently small or large. In general, due to the intricate relation between the quadratic and quartic terms, we can not expect that the conditions in Definition \ref{def:ssp} do hold for all choices of $\beta$ and $A$. 

For instance, let us consider the example $A := \alpha {\mathds 1}{\mathds 1}^T$, $\alpha \geq 0$, and ${\bf z} := {\mathds 1}/\sqrt{n} \in \Sn$. Then, the associated multiplier $\lambda$ is given by $2 \lambda = \alpha n + 2\beta /n$ and it can be shown that ${\bf z}$ is a stationary point of \eqref{eqn:obj} for all $\alpha$. Furthermore, for all ${\bf v} \in \mathcal{T}_{\mathbf{z}}\mathcal{M}$, it follows
\[ H_f({\bf z})[{\bf v}] = {\bf v}^T[A + 6\beta \diag(|{\bf z}|^2) - 2\lambda I]{\bf v} = [ 4\beta n^{-1} - \alpha n ] \|{\bf v}\|^2. \]
Hence, in the case $\alpha = 4\beta / n^2$, the strict-saddle property can not hold. Let us further note that the strong convexity condition in Definition \ref{def:ssp} is never satisfied at stationary points in the complex case. In particular, if ${\bf z} \in \CSn$ is a critical point, then we have $i{\bf z} \in \mathcal{T}_{\mathbf{z}}\mathcal{M}\cap\CSn$ and $H_f({\bf z})[i {\bf z}] = 0$ which contradicts condition 1. 

In Figure \ref{fig:land}, we illustrate different landscapes of the mapping $f$ when the problem is real and three-dimensional and the parameter $\beta$ changes. We consider the setup 
\begin{equation} \label{eq:example-A} A = \begin{pmatrix} 1 & 0 & 1 \\ 0 & 1 & 0 \\ 1 & 0 & 1 \end{pmatrix}, \quad \beta \in \{0.25,0.75,1.25,3.25\}, \end{equation}
and the eigenvalues of $A$ are given by $2$, $1$, and $0$, respectively. 
Moreover, we use spherical coordinates $(\phi,\theta) \mapsto (\cos(\phi),\sin(\phi)\cos(\theta),\sin(\phi)\sin(\theta))^{T}$, $\phi \in [0,2\pi]$, $\theta \in [0,\pi]$, to plot the objective function $f$ on the sphere. 

\begin{figure}[t]
\centering
\setlength{\belowcaptionskip}{-6pt}
\begin{tabular}{cc}
\subfloat[$\beta = 0.25$]{\includegraphics[trim={0 1.5cm 0 2cm},clip,width=5.5cm]{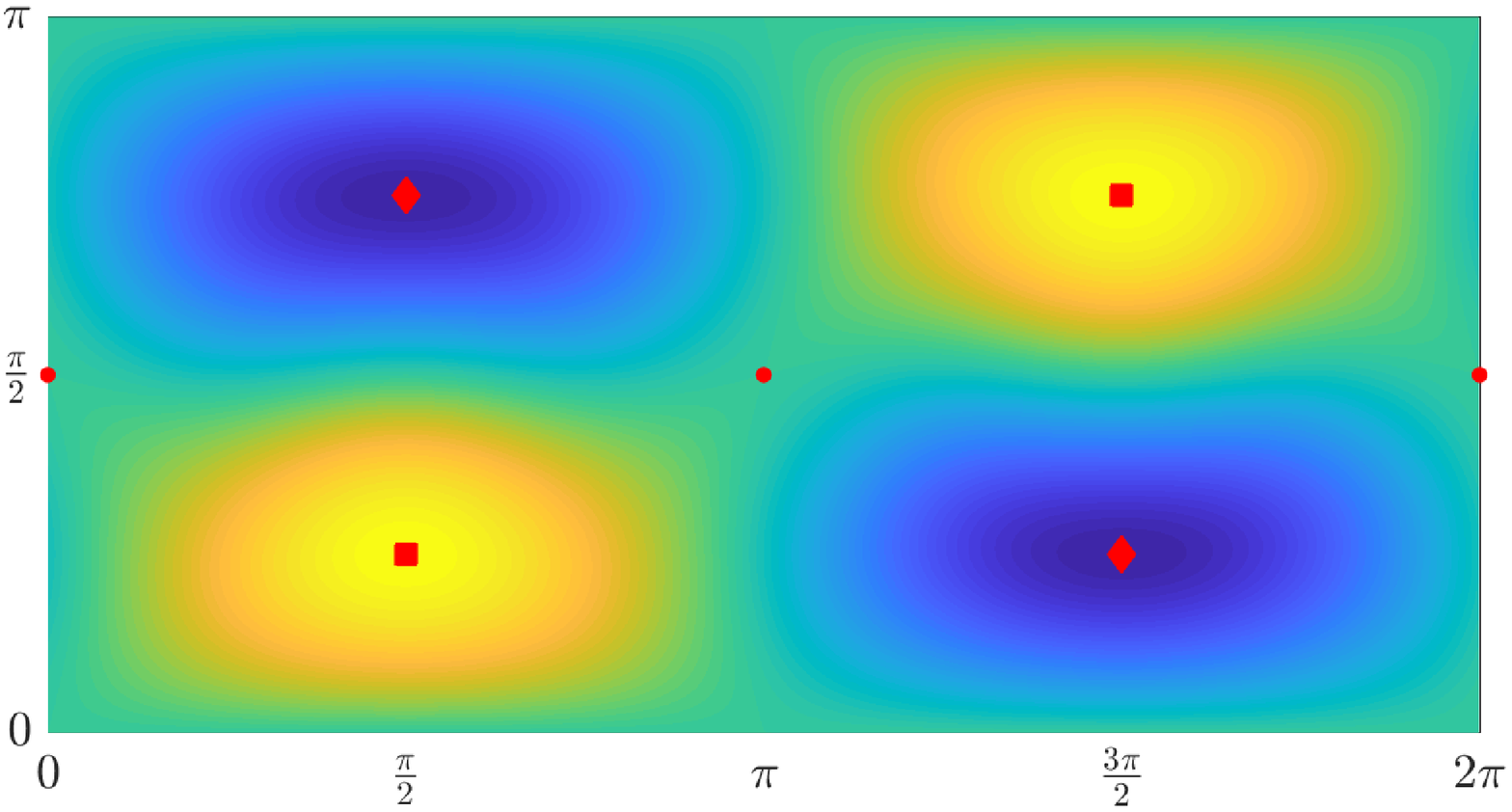}} &
\subfloat[$\beta = 0.75$]{\includegraphics[trim={0 1.5cm 0 2cm},clip,width=5.5cm]{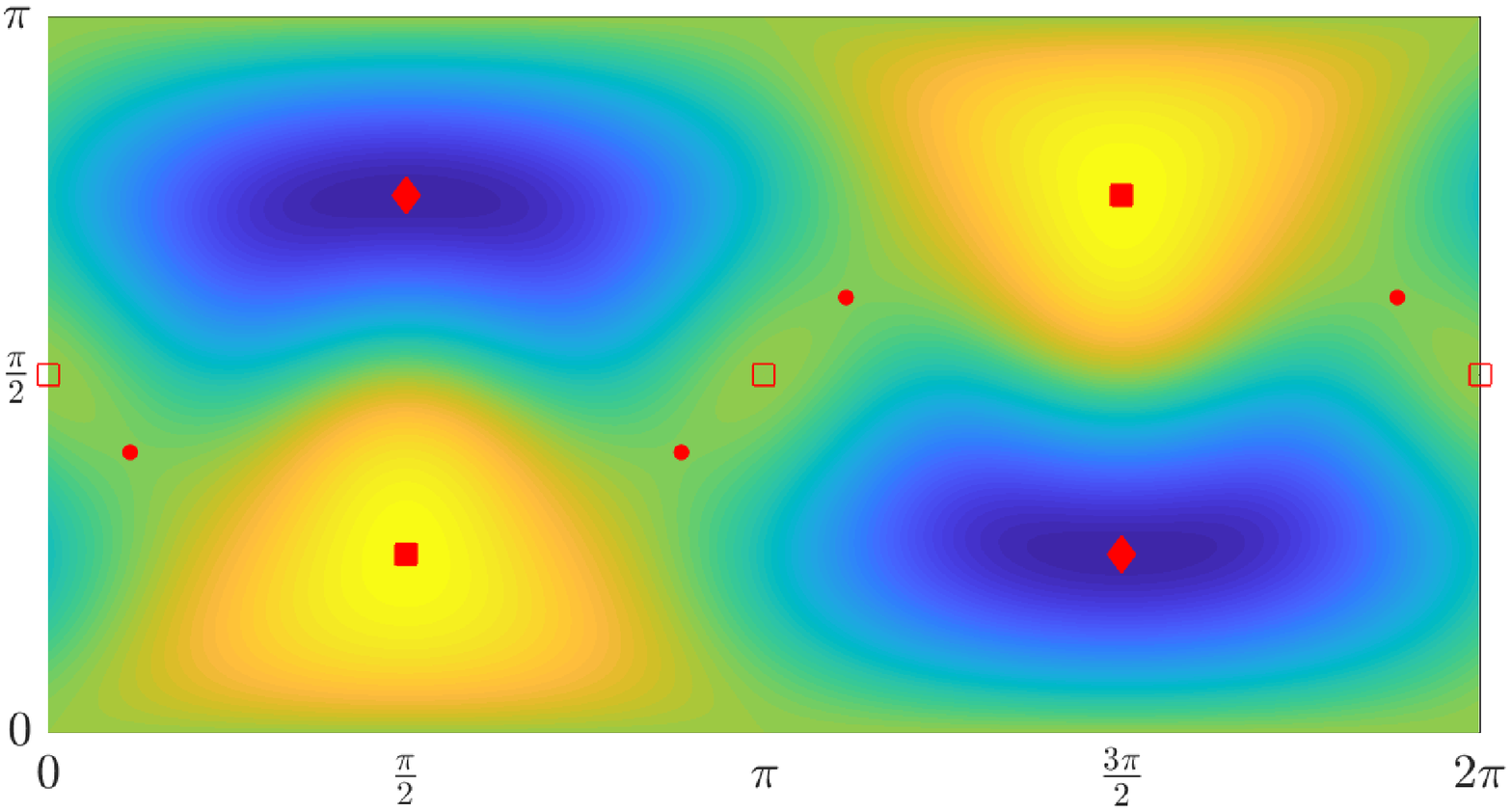}} \\
\subfloat[$\beta = 1.25$]{\includegraphics[trim={0 1.5cm 0 2cm},clip,width=5.5cm]{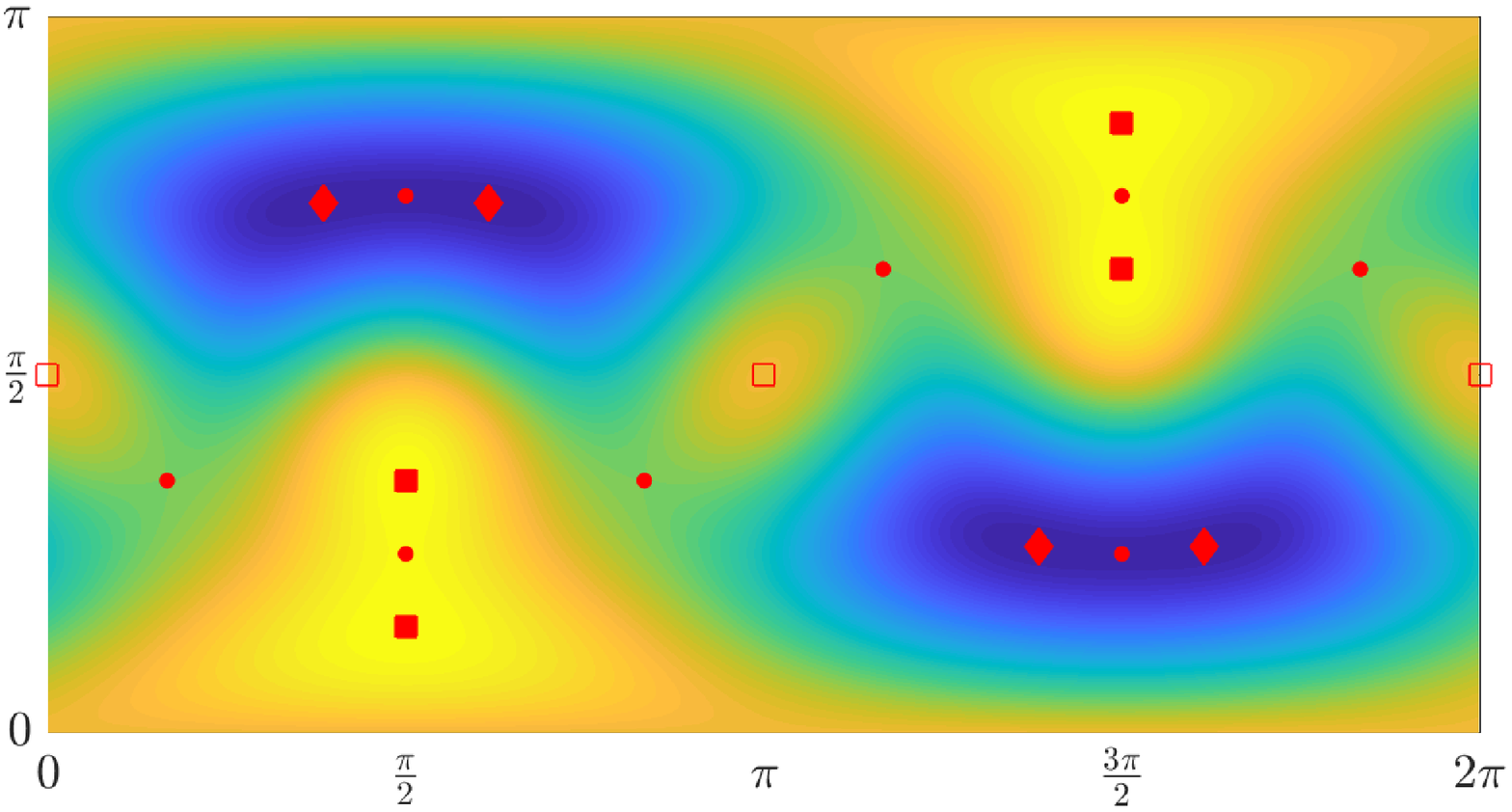}} &
\subfloat[$\beta = 3.25$]{\includegraphics[trim={0 1.5cm 0 2cm},clip,width=5.5cm]{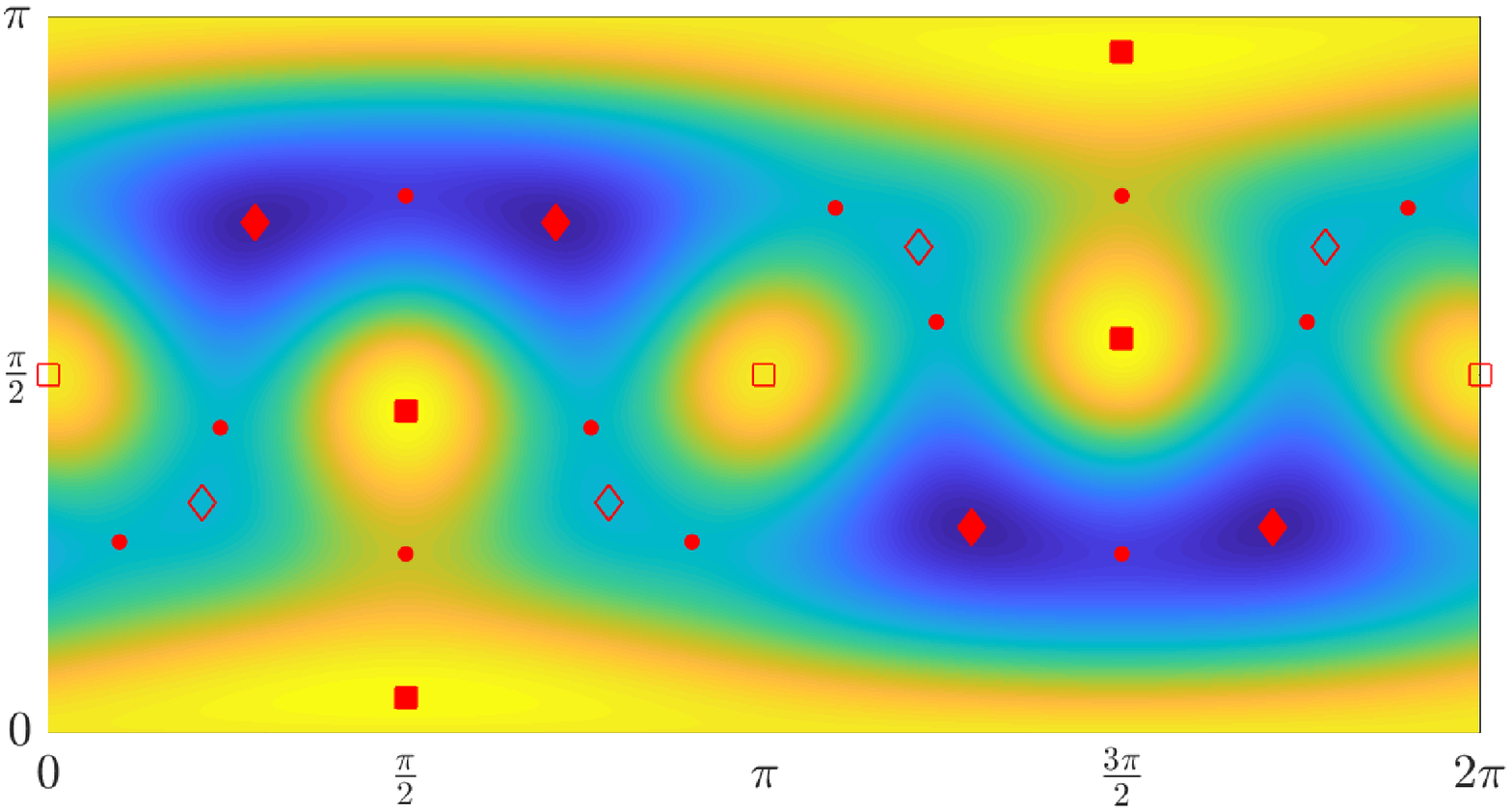}} 
\end{tabular}
\caption{Plot of the landscape of the objective function for fixed $A$ and different values of $\beta$. The red point marker depicts the location of saddle points. Local and global minima are indicated by non-filled and filled diamond markers. The location of local and global maxima is marked by non-filled and filled squares.}
\label{fig:land}
\end{figure}

Figure \ref{fig:land} demonstrates that the landscape of the objective function varies a lot when the interaction coefficient $\beta$ changes. Specifically, it shows that the number of stationary points and local minima increases from $6$ to $26$ and from $2$ to $8$ as $\beta$ increases from $0.25$ to $3.25$. 
In section \ref{sec:large} and section \ref{sec:small}, we investigate basic geometric features and the strict-saddle property for large and small choices of $\beta$ while keeping the matrix $A$ fixed. In particular, our results will allow us to characterize and describe the geometric landscape of $f$ in the sub-figures (a) and (d) of Figure \ref{fig:land}.  
In the following, we assume $n\geq2$ since the landscape of $f$ is trivial in the case $n=1$.



\subsection{Large interaction coefficient}
\label{sec:large}

In this section, we prove that the objective function possesses the strict-saddle property if the coefficient $\beta$ is chosen sufficiently large and satisfies $\beta \approx C\rho n^{3/2}$ for some constant $C > 0$. Here, $\rho := \lambda_1 - \lambda_n$ denotes the difference between the largest and the smallest eigenvalue of $A$. We analyze the geometric properties and behavior of $f$ on the following sets:
\begin{itemize}
  \item[1.](Strong convexity) $\mathcal{R}_{1} :=\{\mathbf{z}\in\Sn: \| |{\bf z}|^2-{\mathds 1}/n \|_\infty \leq 1/2n\}$,
  \item[2.](Large gradient) $\mathcal{R}_{2} := \{\mathbf{z}\in\Sn: \| |{\bf z}|^2-{\mathds 1}/n \|_\infty \geq 1/2n,\\\min_{k\in[n]} z_{k}^{2}\geq(n-1)/4n^{2}\}$,
  \item[3.](Negative curvature) $\mathcal{R}_{3} :=\{\mathbf{z}\in\Sn:\min_{k\in[n]}z_{k}^{2}\leq(n-1)/4n^{2}\}$.
\end{itemize}
Obviously, these three regions cover the sphere $\mathbb{S}^{n-1}$. An illustration of the regions $\mathcal R_1$--$\mathcal R_3$ is given later in Figure \ref{figure:areas}. 

We first present a preparatory result that is used later to estimate of the spectrum of the Riemannian Hessian. 
\begin{lemma} The following estimate holds for all $\mathbf{z}\in \Sn$:
\begin{equation} \label{eq:lem-05}
 z_{0}^2 \leq \min_{{\bf v} \in \mathcal T_{\bf z}\mathcal M \cap \Sn} \sum_{k \in [n]} v_k^2z_k^2 \leq  \frac{n}{n-1}(1-z_{0}^2)z_{0}^2,
\end{equation}
where $ z_{0}^2 := \min_{k \in [n]} z_k^2$. 
\label{lem:large-0}
\end{lemma}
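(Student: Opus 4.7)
The plan is to treat the two inequalities separately. The lower bound is immediate: for any unit vector $\mathbf v \in \mathcal T_{\bf z}\mathcal M \cap \Sn$, the pointwise estimate $v_k^2 z_k^2 \geq v_k^2 z_0^2$ summed over $k$ gives $\sum_k v_k^2 z_k^2 \geq z_0^2\|\mathbf v\|^2 = z_0^2$. So the real work is in the upper bound, and the strategy is to exhibit a single explicit unit tangent direction that already achieves it.

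The motivation for the candidate comes from the substitution $u_k = v_k z_k$ (valid when all $z_k \neq 0$), which converts the minimization of $\sum_k v_k^2 z_k^2$ over the unit tangent sphere into a Rayleigh-quotient problem for $D^{-2} = \diag(1/z_k^2)$ on the subspace $\{\mathds{1}\}^\perp \subset \R^n$. A natural trial vector is then $\mathbf u = \mathbf e_{k_0} - \mathds{1}/n$, where $k_0$ is any index with $z_{k_0}^2 = z_0^2$. Accordingly, in the main case $z_0 > 0$ I would set
\[
\tilde v_k := \frac{1}{z_k}\Big(\delta_{k,k_0} - \frac{1}{n}\Big), \qquad \mathbf v := \frac{\tilde{\mathbf v}}{\|\tilde{\mathbf v}\|}.
\]
Orthogonality $\mathbf v^T \mathbf z = 0$ is automatic from $\sum_k(\delta_{k,k_0} - 1/n) = 0$. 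A short computation yields $\sum_k \tilde v_k^2 z_k^2 = (n-1)/n$ and $\|\tilde{\mathbf v}\|^2 = (n-1)^2/(n^2 z_0^2) + n^{-2}\sum_{k \neq k_0} 1/z_k^2$, so the claimed upper bound on $\sum_k v_k^2 z_k^2 = (n-1)/(n\|\tilde{\mathbf v}\|^2)$ reduces to the scalar inequality $\|\tilde{\mathbf v}\|^2 \geq (n-1)^2/[n^2 z_0^2(1-z_0^2)]$.

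This last inequality is the main step, and I would deduce it from Cauchy-Schwarz applied to the pair $(1/z_k)_{k \neq k_0}$ and $(z_k)_{k \neq k_0}$, which yields $(n-1)^2 \leq (1-z_0^2)\sum_{k \neq k_0} 1/z_k^2$, combined with the algebraic identity $1/z_0^2 + 1/(1-z_0^2) = 1/[z_0^2(1-z_0^2)]$. The degenerate case $z_0 = 0$ (where $\tilde{\mathbf v}$ is undefined) is handled separately by the trivial choice $\mathbf v = \mathbf e_{k_0}$, which lies in $\mathcal T_{\bf z}\mathcal M \cap \Sn$ and produces $\sum_k v_k^2 z_k^2 = 0$, matching the vanishing right-hand side. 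The only real obstacle is guessing the right candidate $\tilde{\mathbf v}$; once the substitution and the choice $\mathbf u = \mathbf e_{k_0} - \mathds{1}/n$ are in hand, the rest is a short algebraic computation and one invocation of Cauchy-Schwarz.
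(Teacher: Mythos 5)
Your proposal is correct and is essentially the paper's own argument: the trial direction $\tilde v_k = z_k^{-1}(\delta_{k,k_0} - \tfrac1n)$ is, up to sign and scaling, exactly the paper's candidate $v_i = -(n-1)c/z_i$, $v_k = c/z_k$ ($k\neq i$), and the pivotal estimate $(n-1)^2 \leq (1-z_0^2)\sum_{k\neq k_0} z_k^{-2}$ is the same Cauchy--Schwarz (reverse H\"older) inequality the paper invokes, with the degenerate case $z_0=0$ handled identically via $\mathbf{v}=\mathbf{e}_{k_0}$. The Rayleigh-quotient reformulation you use to motivate the choice is a nice heuristic the paper omits, but it changes only the bookkeeping, not the substance of the proof.
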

\begin{proof}
If there exists $k \in [n]$ with $z_k = 0$, then the optimal value of the latter optimization problem is $0$ and it is attained for ${\bf v} = e_k$. Next, let us assume $z_0^2 > 0$ and let $i \in [n]$ be given with $z_i^2 = z_0^2$. We define $\mathbf{v}\in\mathcal{T}_{\mathbf{z}}\mathcal{M}\cap\mathbb{S}^{n-1}$ via
\begin{align*}
v_{i}=-\frac{(n-1)c}{z_{1}},\quad v_{k}=\frac{c}{z_{k}}, \quad c=\left[\frac{(n-1)^{2}}{z_{i}^{2}}+{\sum}_{k\neq i}\frac{1}{z_{k}^{2}}\right]^{-1/2}, \quad k \neq i.
\end{align*}
Using the reverse H\"older inequality, we have $\sum_{k\neq i} z_k^{-2} \geq (n-1)^2 (\sum_{k \neq i} z_k^2)^{-1}$ and thus, we obtain
\begin{align*}
\sum_{k\in[n]}z_{k}^{2}v_{k}^{2}=n(n-1)c^{2} \leq \frac{n(n-1)}{\frac{(n-1)^{2}}{z_{0}^{2}}+\frac{(n-1)^2}{{1-z_{0}^{2}}}} \leq \frac{n}{n-1}(1-z_{0}^{2})z_{0}^{2}.
\end{align*}
This establishes the upper bound in \eqref{eq:lem-05}. The lower bound follows from $\|{\bf v}\| = 1$ and $z_k^2 \geq z_0^2$ for all $k \in [n]$.

\end{proof}


In the following lemma, we verify that the objective function has the strong convexity property on the region $\mathcal{R}_{1}$.
\begin{lemma}
Let $\gamma > 0$ be given and suppose that $\beta \geq 2(1+\gamma)\rho n$. Then, for all ${\bf z} \in \mathcal R_1$ and all $\mathbf{v}\in\mathcal{T}_{\mathbf{z}}\mathcal{M}\cap\mathbb{S}^{n-1}$ it holds that $H_{f}(\mathbf{z})[\mathbf{v}] \geq \gamma \rho$.
\label{lem:large-1}
\end{lemma}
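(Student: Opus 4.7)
The plan is to write $H_f(\mathbf{z})[\mathbf{v}]$ as the sum of a quadratic contribution controlled by the spectrum of $A$ and a $\beta$-contribution, and then to exploit the fact that on $\mathcal R_1$ the squared modulus $z_k^2$ is close to $1/n$. Recall that in the real setting
\[
H_f(\mathbf{z})[\mathbf{v}] = \mathbf{v}^T A \mathbf{v} + 6\beta \sum_{k\in[n]} z_k^2 v_k^2 - 2\lambda\|\mathbf{v}\|^2,
\qquad
2\lambda = \mathbf{z}^T A \mathbf{z} + 2\beta \|\mathbf{z}\|_4^4.
\]
Using $\|\mathbf{v}\|=1$ and substituting the explicit form of $\lambda$, we get the clean decomposition
\[
H_f(\mathbf{z})[\mathbf{v}] = \bigl[\mathbf{v}^T A \mathbf{v} - \mathbf{z}^T A \mathbf{z}\bigr] + 2\beta\bigl[3 {\sum}_{k\in[n]} z_k^2 v_k^2 - \|\mathbf{z}\|_4^4\bigr].
\]
The first bracket is bounded below by $\lambda_n - \lambda_1 = -\rho$, so the entire burden is to show that the second bracket is at least $\frac{1}{4n}$ uniformly on $\mathcal R_1$.

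The main step is to parametrize $\mathbf{z}\in\mathcal R_1$ by the deviations $\epsilon_k := z_k^2 - \tfrac{1}{n}$, which satisfy $|\epsilon_k|\leq \tfrac{1}{2n}$ and $\sum_{k\in[n]}\epsilon_k = 0$. Expanding gives
\[
{\sum}_{k\in[n]} z_k^2 v_k^2 = \frac{1}{n} + {\sum}_{k\in[n]} \epsilon_k v_k^2,
\qquad
\|\mathbf{z}\|_4^4 = \frac{1}{n} + {\sum}_{k\in[n]} \epsilon_k^2,
\]
so that $3\sum_{k} z_k^2 v_k^2 - \|\mathbf{z}\|_4^4 = \tfrac{2}{n} + 3\sum_{k} \epsilon_k v_k^2 - \sum_{k}\epsilon_k^2$. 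The two remainder terms are then controlled by $|\sum_k \epsilon_k v_k^2| \leq \|\epsilon\|_\infty \|\mathbf{v}\|^2 \leq \tfrac{1}{2n}$ and $\sum_k \epsilon_k^2 \leq n\|\epsilon\|_\infty^2 \leq \tfrac{1}{4n}$, which together yield the lower bound $\tfrac{2}{n} - \tfrac{3}{2n} - \tfrac{1}{4n} = \tfrac{1}{4n}$.

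Combining the two pieces produces
\[
H_f(\mathbf{z})[\mathbf{v}] \geq -\rho + \frac{\beta}{2n},
\]
and the hypothesis $\beta \geq 2(1+\gamma)\rho n$ then forces $H_f(\mathbf{z})[\mathbf{v}] \geq \gamma\rho$, as desired. I expect the arithmetic in the second bracket to be the only delicate point: cruder bounds such as $\sum_k z_k^2 v_k^2 \geq \min_k z_k^2$ (even combined with $\|\mathbf{z}\|_4^4 \leq \tfrac{9}{4n}$) give only a negative lower bound, so the cancellation enabled by the centering $\sum_k \epsilon_k = 0$ is essential. No use of the tangent condition $\mathbf{v}^T\mathbf{z}=0$ is needed beyond $\|\mathbf{v}\|=1$, which slightly simplifies the argument.
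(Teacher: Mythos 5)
Your proof is correct and takes essentially the same approach as the paper: both decompose $H_f(\mathbf{z})[\mathbf{v}] = [\mathbf{v}^TA\mathbf{v}-\mathbf{z}^TA\mathbf{z}] + 2\beta[3\sum_k z_k^2 v_k^2 - \|\mathbf{z}\|_4^4]$, bound the first bracket below by $-\rho$, and use centering of $w_k := z_k^2 - 1/n$ together with $\|w\|_\infty \leq 1/(2n)$ to get $\|\mathbf{z}\|_4^4 \leq 5/(4n)$ and $\sum_k z_k^2 v_k^2 \geq 1/(2n)$, yielding the same $\beta/(2n)$ lower bound for the second bracket. (One small remark: your claim that the bound $\sum_k z_k^2 v_k^2 \geq \min_k z_k^2$ would "give only a negative lower bound" slightly mischaracterizes the issue — the paper in fact uses exactly that bound via Lemma~\ref{lem:large-0}; what makes the arithmetic work is the sharper bound $\|\mathbf{z}\|_4^4 \leq 5/(4n)$ from centering, which you also use, rather than the crude $\|\mathbf{z}\|_4^4 \leq 9/(4n)$.)
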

\begin{proof}
Let us define $w_{k} := z_{k}^{2}-\frac{1}{n}$. Hence, due to $\sum_{k\in[n]} w_{k}=0$, we obtain:
\begin{align*}
\sum_{k\in[n]}z_{k}^{4}=\sum_{k\in[n]}\left[w_{k}+\frac{1}{n}\right]^{2}=\frac{1}{n}+\sum_{k\in[n]} w_{k}^{2} \leq \frac{5}{4n}.
\end{align*}
Next, by Lemma \ref{lem:large-0} and ${\bf z} \in \mathcal R_1$, we have
\begin{align*}
\min_{\mathbf{v}\in\mathcal{T}_{\mathbf{z}}\mathcal{M}\cap\mathbb{S}^{n-1}}\sum_{k\in[n]}z_{k}^{2}v_{k}^{2} \geq \min_{k \in [n]} z_k^2 \geq \frac{1}{2n}.
\end{align*}
Combining the last results and using $\mathbf{v}^{T}A\mathbf{v}-\mathbf{z}^{T}A\mathbf{z}\geq-\rho$, it follows
\begin{align*}
\min_{\mathbf{v}\in\mathcal{T}_{\mathbf{z}}\mathcal{M}\cap\mathbb{S}^{n-1}}H_{f}(\mathbf{z})[\mathbf{v}]&\geq-\rho+\frac{2\beta}{n} \left [ \frac32 - \frac54 \right] \geq \gamma \rho  
\end{align*}
for all $\mathbf{z}\in\mathcal{R}_{1}$.
\end{proof}

The next lemma shows that the norm of the Riemannian gradient is strictly larger than zero -- uniformly -- on the region $\mathcal{R}_{2}$.
\begin{lemma}
Let $\beta \geq 8(1+\frac{1}{n-1}) (1+\gamma) \rho n^{3/2}$ be given for some $\gamma > 0$. Then, for all $\mathbf{z}\in\mathcal{R}_{2}$, it holds that $\lVert\grad{f(\mathbf{z})}\rVert \geq\frac{\gamma}{\sqrt{2}}\rho$.
\label{lem:large-2}
\end{lemma}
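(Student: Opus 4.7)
The plan is to decompose the Riemannian gradient into its quadratic and quartic contributions,
\[
\grad f(\mathbf{z}) = \mathbf{r} + \mathbf{q}, \quad \mathbf{r} := A\mathbf{z} - (\mathbf{z}^T A\mathbf{z})\mathbf{z}, \quad q_k := 2\beta z_k\bigl(z_k^2 - \|\mathbf{z}\|_4^4\bigr),
\]
and to show that on $\mathcal{R}_2$ the quartic piece $\mathbf{q}$ dominates and scales like $\beta$. For the quadratic piece, $\mathbf{r}$ is the orthogonal projection of $A\mathbf{z}$ onto $\mathbf{z}^\perp$, so for any scalar $c$ one has $\|\mathbf{r}\| \leq \|(A-cI)\mathbf{z}\| \leq \|A-cI\|_2$. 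Choosing $c = (\lambda_1+\lambda_n)/2$ gives the clean bound $\|\mathbf{r}\| \leq \rho/2$.

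For the lower bound on $\|\mathbf{q}\|$, I would introduce the shifted coordinates $w_k := z_k^2 - 1/n$ and $s := \|\mathbf{w}\|^2 = \|\mathbf{z}\|_4^4 - 1/n$, so that $z_k^2 - \|\mathbf{z}\|_4^4 = w_k - s$. The condition $\min_k z_k^2 \geq (n-1)/4n^2$ in the definition of $\mathcal{R}_2$, together with $\sum_k w_k = 0$, yields
\[
\|\mathbf{q}\|^2 = 4\beta^2 \sum_{k\in[n]} z_k^2 (w_k-s)^2 \geq \frac{\beta^2(n-1)}{n^2} \sum_{k\in[n]} (w_k-s)^2 = \frac{\beta^2(n-1)}{n^2}\bigl(s + ns^2\bigr).
\]
The remaining condition $\||\mathbf{z}|^2 - \mathds{1}/n\|_\infty \geq 1/(2n)$ implies $s \geq \|\mathbf{w}\|_\infty^2 \geq 1/(4n^2)$, so $\|\mathbf{q}\|^2 \geq \beta^2(n-1)/(4n^4)$.

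Substituting the hypothesis $\beta \geq 8(1+\tfrac{1}{n-1})(1+\gamma)\rho n^{3/2}$ into this estimate produces $\|\mathbf{q}\|^2 \geq 16(1+\gamma)^2\rho^2 \cdot \tfrac{n}{n-1} \geq 16(1+\gamma)^2\rho^2$ for $n\geq 2$, hence $\|\mathbf{q}\| \geq 4(1+\gamma)\rho$. Combined with the bound on $\|\mathbf{r}\|$ via the reverse triangle inequality this yields
\[
\|\grad f(\mathbf{z})\| \geq \|\mathbf{q}\| - \|\mathbf{r}\| \geq 4(1+\gamma)\rho - \rho/2 \geq \frac{\gamma}{\sqrt{2}}\rho,
\]
the claimed bound (with considerable slack in the constants). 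The main obstacle is obtaining the lower bound on $\|\mathbf{q}\|$: both defining inequalities of $\mathcal{R}_2$ are essential, as the uniform lower bound on $z_k^2$ is what converts the $\ell^2$-mass of $\mathbf{w} - s\mathds{1}$ into the weighted sum appearing in $\|\mathbf{q}\|^2$, while $\||\mathbf{z}|^2-\mathds{1}/n\|_\infty \geq 1/(2n)$ is what guarantees that $s$, and thus $\sum_k(w_k-s)^2 = s + ns^2$, is bounded away from zero in the first place.
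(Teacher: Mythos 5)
Your proof is correct, and it takes a genuinely different route from the paper's for the harder half of the estimate. Both arguments start from the same decomposition $\grad f(\mathbf{z}) = \mathbf{r} + \mathbf{q}$ and the reverse triangle inequality, and both bound $\|\mathbf{r}\|$ by an $O(\rho)$ quantity (the paper uses the eigen-decomposition to get $\|\mathbf{r}\| \leq \rho/\sqrt{2}$; your shift-and-project trick gives the slightly sharper $\rho/2$). The real divergence is in the lower bound on $\|\mathbf{q}\|$. The paper writes $\|\mathbf{q}\|^2 = 4\beta^2(\|\mathbf{z}\|_6^6 - \|\mathbf{z}\|_4^8)$, expands this as $2\beta^2\sum_{k,\ell} z_k^2 z_\ell^2 (z_k^2 - z_\ell^2)^2$, sorts the components $z_1^2 \leq \dots \leq z_n^2$, introduces a splitting index $t$ near the median $(z_1^2+z_n^2)/2$, lower-bounds $t$, and chains several inequalities to extract a factor $(z_n^2 - z_1^2)^2 z_1^4$. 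Your argument sidesteps all of this: you factor out the uniform lower bound $z_k^2 \geq (n-1)/4n^2$ and then evaluate the unweighted sum $\sum_k (w_k - s)^2$ exactly via the centering identity $\sum_k w_k = 0$, $\sum_k w_k^2 = s$, giving $\sum_k(w_k-s)^2 = s + ns^2$. This is considerably more elementary and transparent, and it in fact yields a stronger constant (your $\|\mathbf{q}\|^2 \geq \beta^2(n-1)/(4n^4)$ beats the paper's $\beta^2(n-1)^2/(128 n^5)$ by roughly a factor of $32n/(n-1)$), which is why the final inequality closes with so much slack. What the paper's more laborious sorting argument buys, if anything, is a bound phrased directly in terms of $z_n^2 - z_1^2$, which they reuse verbatim in the proof of Corollary 5.8 for the refined region $\bar{\mathcal{R}}_2$; your centering identity would serve that purpose equally well and more cleanly.
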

\begin{proof}
First, we split the norm of the Riemannian gradient $\grad{f(\mathbf{z})}$ as follows
\begin{align*}
\lVert\grad{f(\mathbf{z})}\rVert & \geq2\beta\lVert\diag(|{\bf z}|^2){\bf z}- \|{\bf z}\|^4_4 \cdot \mathbf{z} \rVert - \lVert A{\bf z}-(\mathbf{z}^{T}A\mathbf{z})\cdot \mathbf{z}\rVert  \\  & = 2\beta \sqrt{\|{\bf z}\|_6^6 - \|{\bf z}\|_4^8} - \lVert A{\bf z}-(\mathbf{z}^TA\mathbf{z})\cdot \mathbf{z}\rVert .
\end{align*}
We now estimate the minuend and subtrahend in the latter expression separately. Let us set $\mathbf{z}=\sum_{k} \alpha_{k}\mathbf{p}_{k}$, where $\{\mathbf{p}_{k}\}_k$ is the set of orthogonal eigenvectors of the matrix $A$ with corresponding eigenvalues $\lambda_{1},...,\lambda_{n}$. Then, it holds that $\sum_{k}\alpha_{k}^{2}=1$ and
\begin{align*}
\lVert A{\bf z}-(\mathbf{z}^{T}A\mathbf{z})\cdot {\bf z}\rVert^{2}&=\mathbf{z}^{T}A^{2}\mathbf{z}-(\mathbf{z}^{T}A\mathbf{z})^{2}=\sum_{k\in[n]}\alpha_{k}^{2}\lambda_{k}^{2}-\left[{\sum}_{k\in[n]}\alpha_{k}^{2}\lambda_{k}\right]^{2} \\
&=\sum_{k\in[n]}\alpha_{k}^{2}\lambda_{k}^{2} -\sum_{k,\ell \in[n]}\alpha_{k}^{2} \alpha_\ell^2 \lambda_{k} \lambda_\ell \\
& =\frac{1}{2}\sum_{k, \ell \in[n]}\alpha_{k}^{2} \alpha_{\ell}^{2}(\lambda_{k}-\lambda_{\ell})^{2} \leq \frac{1}{2} \rho^2 \sum_{k, \ell \in [n]} \alpha_{k}^{2} \alpha_{\ell}^{2} = \frac{1}{2}\rho^{2}.
\end{align*}
We continue with bounding the term $\lVert\mathbf{z}\rVert_{6}^{6}-\lVert\mathbf{z}\rVert_{4}^{8}$. As before using the spherical constraint $\sum_k z_k^2 = 1$, we obtain:
\begin{align*}
\lVert\mathbf{z}\rVert_{6}^{6}-\lVert\mathbf{z}\rVert_{4}^{8} =  \sum_{k\in[n]} z_{k}^{6}-\left[ {\sum}_{k\in[n]}z_{k}^{4}\right]^{2} = \frac12 \sum_{k,\ell\in[n]} z_{k}^{2}z_{\ell}^{2}(z_{k}^{2}-z_{\ell}^{2})^{2}.
\end{align*}
In the following and without loss of generality, we assume that the components of ${\bf z}$ are ordered and satisfy $z_1^2 \leq z_2^2 \leq ... \leq z_n^2$. Notice that $\mathbf{z}\in\mathcal{R}_{2}$ implies $z_1^2 \neq z_n^2$ and let us choose $t \in [n-1]$ such that $z_t^2 \leq \frac12(z_1^2 + z_n^2) \leq z_{t+1}^2$.
%
Then, we have
\begin{align*}
1=\sum_{k=1}^{t}z_{k}^{2}+\sum_{k=t+1}^{n}z_{k}^{2}\geq tz_{1}^{2}+\frac{n-t-1}{2}(z_{1}^{2}+z_{n}^{2}) + z_n^2,
\end{align*}
which yields
\begin{align*}
t\geq\frac{(n-1)(z_{n}^{2}+z_{1}^{2}) + 2z_n^2 -2}{z_{n}^{2}-z_{1}^{2}} \geq \frac{(n-1)z_1^2 + z_n^2 - 1}{z_n^2 - z_1^2}.
\end{align*}
Consequently, we get
\begin{align*}
\sum_{k,\ell\in[n]}z_{k}^{2}z_{\ell}^{2}(z_{k}^{2}-z_{\ell}^{2})^{2} & \geq z_1^2 \sum_{k \in [n]} z_k^2 (z_k^2 - z_1^k)^2 + z_n^2 \sum_{k \in [n]} z_k^2 (z_k^2 - z_n^2)^2 \\  
&\geq \frac{1}{4}(z_{n}^{2}-z_{1}^{2})^{2} \left[ z_{1}^{2} \sum_{k=t+1}^{n} z_{k}^{2}+z_{n}^{2}\sum_{k=1}^{t}z_{k}^{2} \right] \\
&=\frac{1}{4}(z_{n}^{2}-z_{1}^{2})^{2}\left [z_{1}^{2}+(z_{n}^{2}-z_{1}^{2})\sum_{k=1}^{t}z_{k}^{2}\right]\\
&\geq\frac{1}{4}(z_{n}^{2}-z_{1}^{2})^{2}[z_{1}^{2}+(z_{n}^{2}-z_{1}^{2})t z_{1}^{2}] \geq \frac{n}{4} (z_n^2-z_1^2)^2 z_1^4.
\end{align*}
Using $z_{1}^{2}\geq\frac{n-1}{4n^{2}}$ and $\max_{k}\lvert z_{k}^{2}-\frac{1}{n}\rvert = \max\{|z_1^2 - \frac1n|,|z_n^2 - \frac1n|\}\geq\frac{1}{2n}$, it follows
\begin{align*}
2[\|{\bf z}\|_6^6 - \|{\bf z}\|_4^8] \geq \frac{n}{4}(z_{n}^{2}-z_{1}^{2})^{2}z_{1}^{4}\geq\frac{n}{4}\left(\frac{1}{2n}\right)^{2}\left(\frac{n-1}{4n^{2}}\right)^{2}=\left(\frac{n-1}{16n^{5/2}}\right)^{2}
\end{align*}
and finally, combining the last results, we obtain
\[ \lVert\grad{f(\mathbf{z})}\rVert 
\geq\frac{1}{\sqrt{2}}\left[ 2\beta\cdot\frac{n-1}{16n^{5/2}}-\rho\right ] \geq \frac{\gamma}{\sqrt{2}} \rho, \]
as desired.
\end{proof}

Finally, we show that we can always find a negative curvature direction if ${\bf z}$ belongs to the set $\mathcal{R}_{3}$.
\begin{lemma}
Let $\gamma > 0$ be arbitrary and let $\beta \geq 2(1+\gamma)\rho n$ be given. Then, for all $\mathbf{z}\in\mathcal{R}_{3}$, there exists $\mathbf{v}\in\mathcal{T}_{\mathbf{z}}\mathcal{M}\cap\mathbb{S}^{n-1}$ such that $H_{f}(\mathbf{z})[\mathbf{v}]\leq-\gamma\rho$.
\label{lem:large-3}
\end{lemma}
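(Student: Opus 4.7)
The plan is to pick a tangent direction that suppresses the quartic curvature term $6\beta\sum_k v_k^2 z_k^2$ by concentrating mass on the coordinates where $|z_k|$ is small; the quadratic part $\mathbf{v}^TA\mathbf{v}-2\lambda$ then loses to the dominant $-2\beta\|\mathbf{z}\|_4^4$ contribution hidden in $2\lambda$. The obvious choice $\mathbf{v}=e_{k_0}$ with $z_{k_0}^2=\min_k z_k^2=:z_0^2$ is too crude for $z_0>0$, so instead I will invoke Lemma~\ref{lem:large-0} to produce a $\mathbf{v}\in\mathcal{T}_{\mathbf{z}}\mathcal{M}\cap\mathbb{S}^{n-1}$ achieving the upper bound
\[
\sum_{k\in[n]} v_k^2 z_k^2 \;\leq\; \frac{n}{n-1}(1-z_0^2)\,z_0^2\;\leq\;\frac{n}{n-1}\,z_0^2,
\]
which is the smallest value available on the sphere intersected with the tangent space.

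Starting from $H_f(\mathbf{z})[\mathbf{v}]=\mathbf{v}^T A\mathbf{v}+6\beta\sum_{k}v_k^2 z_k^2-2\lambda$, I substitute $2\lambda=\mathbf{z}^TA\mathbf{z}+2\beta\|\mathbf{z}\|_4^4$ and use the two elementary estimates $\mathbf{v}^T A\mathbf{v}-\mathbf{z}^T A\mathbf{z}\leq \lambda_1-\lambda_n=\rho$ and $\|\mathbf{z}\|_4^4\geq 1/n$ (the latter from the power-mean inequality, as in the proof of Lemma~\ref{lem:large-1}). Combined with the bound on $\sum v_k^2 z_k^2$ from Lemma~\ref{lem:large-0}, this yields
\[
H_f(\mathbf{z})[\mathbf{v}]\;\leq\;\rho\;+\;\frac{6\beta n}{n-1}\,z_0^2\;-\;\frac{2\beta}{n}.
\]

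Now I use the defining property of $\mathcal{R}_3$, namely $z_0^2\leq (n-1)/(4n^2)$, which makes the middle term at most $3\beta/(2n)$, so
\[
H_f(\mathbf{z})[\mathbf{v}]\;\leq\;\rho\;+\;\frac{3\beta}{2n}\;-\;\frac{2\beta}{n}\;=\;\rho\;-\;\frac{\beta}{2n}.
\]
Finally, the hypothesis $\beta\geq 2(1+\gamma)\rho n$ gives $\beta/(2n)\geq(1+\gamma)\rho$ and therefore $H_f(\mathbf{z})[\mathbf{v}]\leq -\gamma\rho$, which is what we want.

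The only non-routine step is the first one, where the right geometric choice of $\mathbf{v}$ must come from Lemma~\ref{lem:large-0} rather than from a naive coordinate direction, since taking $\mathbf{v}=e_{k_0}$ produces a term $6\beta z_0^2$ that is three times too large and does not beat $2\beta/n$ once $z_0^2$ is close to the threshold $(n-1)/(4n^2)$. Everything after that is arithmetic bookkeeping in the constants $\rho$, $\beta$, and $n$, mirroring the style used in Lemmas~\ref{lem:large-1} and~\ref{lem:large-2}.
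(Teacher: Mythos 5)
Your main derivation is correct and is exactly the paper's argument: pass through Lemma~\ref{lem:large-0}, use $\mathbf{v}^TA\mathbf{v}-\mathbf{z}^TA\mathbf{z}\leq\rho$ and $\|\mathbf{z}\|_4^4\geq 1/n$, then plug in $z_0^2\leq(n-1)/(4n^2)$ to reach $\rho-\beta/(2n)\leq-\gamma\rho$. The closing remark is off, though: $\mathbf{v}=e_{k_0}$ is not admissible unless $z_{k_0}=0$ (it fails the tangent-space condition $\mathbf{z}^T\mathbf{v}=0$), and even if it were, at the threshold $z_0^2=(n-1)/(4n^2)$ one has $6\beta z_0^2\approx\tfrac{3}{2}\beta/n<2\beta/n$, so that direction would actually be \emph{smaller} than the value Lemma~\ref{lem:large-0} guarantees, not larger -- the role of the lemma is to produce a feasible $\mathbf{v}$ whose quartic curvature contribution is still small, at the cost of the factor $\tfrac{n}{n-1}(1-z_0^2)$.
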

\begin{proof}
Using the bound $\mathbf{v}^{T}A\mathbf{v}-\mathbf{z}^{T}A\mathbf{z}\leq\rho$ for $\mathbf{z},\mathbf{v}\in\mathbb{S}^{n-1}$ and Lemma \ref{lem:large-0}, it follows
\begin{align*}
\min_{\mathbf{v}\in\mathcal{T}_{\mathbf{z}}\mathcal{M}\cap\mathbb{S}^{n-1}}H_{f}(\mathbf{z})[\mathbf{v}]&\leq\rho+2\beta\left[\frac{3n}{n-1}(1-z_0^2) z_{0}^{2}- \|{\bf z}\|_4^4\right],
\end{align*}
where $z_0^2 := \min_{k \in [n]} z_k^2$. Moreover, due to $\lVert\mathbf{z}\rVert_{4}^{4}\geq\frac{1}{n}\lVert\mathbf{z}\rVert^{4}=\frac{1}{n}$ and $\mathbf{z}\in\mathcal{R}_{3}$, we can infer
\[ \min_{\mathbf{v}\in\mathcal{T}_{\mathbf{z}}\mathcal{M}\cap\mathbb{S}^{n-1}}H_{f}(\mathbf{z})[\mathbf{v}] \leq \rho + 2\beta \left[ \frac{3n}{n-1}z_0^2 - \frac{1}{n} \right] \leq \rho - \frac{\beta}{2n} \leq - \gamma\rho, \]
which finishes the proof of Lemma \ref{lem:large-3}.
\end{proof}

Combining the last lemmata, we can derive the following strict-saddle property.
\begin{theorem}
Suppose that the coefficient $\beta$ satisfies $\beta \geq \frac{8n}{n-1}(1+\gamma)\rho n^{3/2}$ for some given $\gamma > 0$. Then, the function $f$ has the $(C_\gamma \rho,\frac{\gamma}{\sqrt{2}}\rho,C_\gamma \rho)$-strict-saddle property with $C_\gamma := \frac{4}{n-1}(1+\gamma)n^{3/2} - 1 $.
\label{theo:large}
\end{theorem}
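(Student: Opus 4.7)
The plan is very direct: the three regions $\mathcal{R}_1,\mathcal{R}_2,\mathcal{R}_3$ were designed to cover $\Sn$, and Lemmas \ref{lem:large-1}, \ref{lem:large-2}, \ref{lem:large-3} establish exactly the three alternatives in Definition \ref{def:ssp} on these regions respectively. So the theorem should follow by a case distinction together with a careful matching of constants.

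First, I would verify that $\Sn = \mathcal{R}_1 \cup \mathcal{R}_2 \cup \mathcal{R}_3$, which is immediate from the definitions: either $\min_k z_k^2 \leq (n-1)/4n^2$ (giving $\mathcal{R}_3$), or $\min_k z_k^2 > (n-1)/4n^2$ in which case the condition $\||\mathbf{z}|^2 - \mathds{1}/n\|_\infty \leq 1/2n$ separates $\mathcal{R}_1$ from $\mathcal{R}_2$.

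Next, I would match the constants in the hypothesis $\beta \geq \frac{8n}{n-1}(1+\gamma)\rho n^{3/2}$ against the requirements of the three lemmas. Noting that $\frac{8n}{n-1} = 8(1+\frac{1}{n-1})$, the hypothesis is exactly the threshold required by Lemma \ref{lem:large-2}, so that lemma directly yields $\|\grad f(\mathbf{z})\| \geq \frac{\gamma}{\sqrt{2}}\rho$ for $\mathbf{z} \in \mathcal{R}_2$, which is condition 2 of Definition \ref{def:ssp}. For Lemmas \ref{lem:large-1} and \ref{lem:large-3}, whose requirements are of the weaker form $\beta \geq 2(1+\gamma')\rho n$, I would solve for the largest $\gamma'$ that the hypothesis permits, namely $\gamma' \leq \frac{\beta}{2\rho n} - 1 \geq \frac{4(1+\gamma)n^{3/2}}{n-1} - 1 = C_\gamma$. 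Applying Lemma \ref{lem:large-1} with this $\gamma'$ gives $H_f(\mathbf{z})[\mathbf{v}] \geq C_\gamma \rho$ for all $\mathbf{v}\in\mathcal{T}_\mathbf{z}\mathcal{M}\cap\Sn$ whenever $\mathbf{z}\in\mathcal{R}_1$ (condition 1); applying Lemma \ref{lem:large-3} analogously produces, for every $\mathbf{z}\in\mathcal{R}_3$, some $\mathbf{v}\in\mathcal{T}_\mathbf{z}\mathcal{M}\cap\Sn$ with $H_f(\mathbf{z})[\mathbf{v}] \leq -C_\gamma\rho$ (condition 3).

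Combining the three cases and inserting the parameters $(\xi,\epsilon,\zeta) = (C_\gamma\rho, \frac{\gamma}{\sqrt{2}}\rho, C_\gamma\rho)$ into Definition \ref{def:ssp} closes the argument. I do not foresee any real obstacle here since all the technical work has been absorbed into the three preparatory lemmas; the only thing to be slightly careful about is (i) checking that $C_\gamma > 0$ (which holds for $n \geq 2$ and $\gamma > 0$, so that the strong convexity and negative curvature constants are meaningful) and (ii) using the right $\gamma'$ in Lemmas \ref{lem:large-1} and \ref{lem:large-3}, which is tempting to set equal to $\gamma$ but in fact can be taken as large as $C_\gamma$.
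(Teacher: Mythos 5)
Your proposal is correct and matches the paper's (implicit) argument exactly: the paper states Theorem \ref{theo:large} immediately after the sentence ``Combining the last lemmata, we can derive the following strict-saddle property'' and omits a formal proof, since the statement is indeed the direct consequence of Lemmas \ref{lem:large-1}--\ref{lem:large-3} applied on the covering $\mathcal{R}_1\cup\mathcal{R}_2\cup\mathcal{R}_3=\Sn$, with the constant-matching you carry out. Your observation that Lemmas \ref{lem:large-1} and \ref{lem:large-3} should be invoked with $\gamma'=C_\gamma$ (the largest value compatible with $\beta\geq 2(1+\gamma')\rho n$) rather than with $\gamma$ itself is precisely the point that makes the stated constants come out right, and the check $C_\gamma>0$ for $n\geq 2$, $\gamma>0$ is the only remaining detail.
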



As a consequence of the strict-saddle property, we can prove that each component of $\mathcal{R}_{1}$ contains exactly one local minimizer when $\beta$ is sufficiently large. In the next lemma, we first discuss uniqueness of local minimizer if the Riemannian Hessian is positive definite on a certain subset of the sphere. 
\begin{lemma} Let $\nu \in (0,1]$ and ${\bf z}_0 \in \Sn$ be given and let us define $\mathcal R_\nu := \{\mathbf{z}\in\mathbb{S}^{n-1}:\lVert\mathbf{z}-\mathbf{z}_{0}\rVert^{2}\leq \nu\}$. If the Riemannian Hessian $\Hess{f}$ of $f$ is positive definite on $\mathcal R_\nu$, i.e., if we have
\[ {\bf v}^T \Hess{f({\bf z})} {\bf v} = H_f({\bf z})[{\bf v}] > 0, \quad \forall~{\bf v} \in \mathcal T_{{\bf z}}\mathcal M \backslash \{0\}, \quad \forall~{\bf z} \in \mathcal R_\nu, \]
then the problem $\min_{{\bf z} \in \mathcal R_\nu} f({\bf z})$ has at most one local minimizer.
\label{lem:convex}
\end{lemma}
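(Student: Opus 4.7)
My plan is to prove the statement by a geodesic convexity argument on the sphere. The first step will be to verify that $\mathcal{R}_\nu$ is a geodesically convex subset of $\Sn$. Since $\|{\bf z}-{\bf z}_0\|^2 \leq \nu$ is equivalent to ${\bf z}^T{\bf z}_0 \geq 1 - \nu/2 \geq 1/2$, the set $\mathcal{R}_\nu$ is a closed spherical cap contained in the open hemisphere centered at ${\bf z}_0$. Any two points ${\bf y}_1, {\bf y}_2 \in \mathcal{R}_\nu$ can be joined by the unique minimizing great-circle arc $\gamma(t) = [\sin((1-t)\alpha){\bf y}_1 + \sin(t\alpha){\bf y}_2]/\sin\alpha$ with $\alpha = \arccos({\bf y}_1^T{\bf y}_2) \in [0,\pi)$, and the elementary identity $\sin((1-t)\alpha) + \sin(t\alpha) = 2\sin(\alpha/2)\cos((t-\tfrac12)\alpha) \geq \sin\alpha$ on $[0,1]$ yields $\gamma(t)^T{\bf z}_0 \geq \min\{{\bf y}_1^T{\bf z}_0,{\bf y}_2^T{\bf z}_0\} \geq 1 - \nu/2$, so $\gamma([0,1]) \subset \mathcal{R}_\nu$.

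Next, suppose for contradiction that ${\bf z}_1 \neq {\bf z}_2$ are two local minimizers in $\mathcal{R}_\nu$ and let $\gamma:[0,1]\to \mathcal{R}_\nu$ denote the geodesic constructed above. I would define $\varphi(t) := f(\gamma(t))$ and use the classical identity $\ddot\varphi(t) = \Hess{f(\gamma(t))}[\dot\gamma(t),\dot\gamma(t)] = H_f(\gamma(t))[\dot\gamma(t)]$, which holds because $\gamma$ is a geodesic on $\mathcal{M}$. Since $\mathcal{R}_\nu$ is compact and the minimum eigenvalue of $\Hess{f({\bf z})}$ restricted to $\mathcal{T}_{\bf z}\mathcal{M}$ is a continuous and strictly positive function of ${\bf z} \in \mathcal{R}_\nu$, the pointwise positivity assumption upgrades to a uniform bound, say $H_f({\bf z})[{\bf v}] \geq \kappa\|{\bf v}\|^2$ for some $\kappa > 0$ and all admissible $({\bf z},{\bf v})$. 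Together with $\|\dot\gamma(t)\| \equiv \alpha > 0$, this gives $\ddot\varphi(t) \geq \kappa\alpha^2 > 0$ for all $t \in [0,1]$.

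Finally I would exploit the local-minimality assumption. Because ${\bf z}_1$ is a local minimizer of $f$ on $\mathcal{R}_\nu$ and $\gamma(t) \in \mathcal{R}_\nu$ for small $t > 0$, we have $\varphi(t) \geq \varphi(0)$ for small $t > 0$, hence $\dot\varphi(0) \geq 0$; the symmetric argument at $t = 1$ yields $\dot\varphi(1) \leq 0$. However, strict positivity of $\ddot\varphi$ implies $\dot\varphi$ is strictly increasing on $[0,1]$, whence $\dot\varphi(1) > \dot\varphi(0) \geq 0$, contradicting $\dot\varphi(1) \leq 0$. Consequently at most one local minimizer can exist in $\mathcal{R}_\nu$.

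The main obstacle, in my view, is really the clean verification of the two supporting facts -- the geodesic convexity of the cap $\mathcal{R}_\nu$ under the restriction $\nu \leq 1$, and the passage from pointwise to uniform positive definiteness of $\Hess{f}$; once these are in place the contradiction is immediate. One subtlety to watch is whether the local minimizers lie on the boundary $\partial\mathcal{R}_\nu$: the argument using only the one-sided inequalities $\dot\varphi(0) \geq 0$ and $\dot\varphi(1) \leq 0$ handles both the interior and boundary cases uniformly, so no separate treatment is required.
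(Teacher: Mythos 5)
Your proposal is correct and follows essentially the same strategy as the paper: show that the great-circle arc between two putative local minimizers remains inside the spherical cap $\mathcal R_\nu$, show that the positive definite Riemannian Hessian makes $f$ strictly convex along that geodesic, and derive a contradiction. The only cosmetic differences are that you parametrize the geodesic via slerp and use a sum-to-product identity to verify geodesic convexity of the cap, whereas the paper uses the arc-length parametrization $\ell(t)=\cos(t)\mathbf z_1+\sin(t)\mathbf v$ and an analysis of $2-2M\cos(t+\theta)$; your compactness-based upgrade to uniform positive definiteness is harmless but unnecessary, since pointwise $\ddot\varphi(t)>0$ already forces $\dot\varphi(1)>\dot\varphi(0)$.
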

\begin{proof}
Suppose that there exist two different local minima $\mathbf{z}_{1},\mathbf{z}_{2}$ of $f$ in the set $\mathcal{R}_\nu$. Let us consider the geodesic curve $\ell : \R \to \Sn$ on the sphere connecting $\mathbf{z}_{1}$ and $\mathbf{z}_{2}$. As shown in \cite[Example 5.4.1]{Absil2009Optimization}, the curve $\ell$ can be represented as follows
\[ \ell(t) := {\bf z}_1 \cos(t)+{\bf v}\sin(t), \quad {\bf v}:=\frac{{\bf z}_{2}-({\bf z}_{2}^{T}{\bf z}_{1}){\bf z}_{1}}{\|{\bf z}_{2}-({\bf z}_{2}^{T}{\bf z}_{1}){\bf z}_{1}\|} \in \mathcal T_{{\bf z}_1}\mathcal M \cap \Sn, \]
where $T\in(0,2\pi)$ is chosen such that $\ell(T) = {\bf z}_2$. Multiplying $\ell(T)={\bf z}_{2}$ with ${\bf v}^{T}$ from the left yields $\sin(T)={\bf v}^{T}{\bf z}_{2}\geq0$ and hence, we have $T\in(0,\pi]$. Then, for all $t \in (0,T)$, it follows $\ell(t)^T \ell^\prime(t) = 0$, $\|\ell^\prime(t)\| = 1$, and 
%
%
\begin{align*}
\lVert\ell(t)-\mathbf{z}_{0}\rVert^{2}=2-2\left(\mathbf{z}_{0}^{T}{\bf z}_1\cdot\cos(t)+\mathbf{z}_{0}^{T}{\bf v}\cdot\sin(t)\right)=2-2M\cos(t+\theta),
\end{align*}
where $M=\sqrt{(\mathbf{z}_{0}^{T}{\bf z}_{1})^{2}+(\mathbf{z}_{0}^{T}{\bf v})^{2}}$ and $\theta=\arccos(\mathbf{z}_{0}^{T}{\bf z}_{1})$. Since $\ell(0)$ and $\ell(T)$ are both elements of $\mathcal{R}_\nu$, we know that
\begin{align*}
\lVert\mathbf{z}_{0}-\ell(0)\rVert^{2}\leq \nu\leq1,\quad\lVert\mathbf{z}_{0}-\ell(T)\rVert^{2}\leq \nu\leq1,
\end{align*}
which lead to $\cos(\theta),\cos(\theta+T)>0$. Due to $T\in(0,\pi]$, we have $\theta,\theta+T\in\left(-\frac{\pi}{2}+2k\pi,\frac{\pi}{2}+2k\pi\right)$ for some $k\in\mathbb{Z}$. The landscape of $\cos(t)$ in this range shows that the maximal value of $\lVert\mathbf{z}_{0}-\ell(t)\rVert^{2}$ is achieved by the endpoints $t=0$ or $t=T$. Hence, it holds that $\lVert\mathbf{z}_{0}-\ell(t)\rVert^{2}\leq\nu$ and $\ell(t)\in\mathcal{R}_\nu$ for all $t\in[0,T]$.

The special form of the curve $\ell$ yields $\ell^{\prime\prime}(t)=-\ell(t)$ for all $t \in \R$ and thus, the second-order derivative of the continuous function $g(t):=f(\ell(t))$ is given by
\begin{align*}
g^{\prime\prime}(t)&=\ell^{\prime\prime}(t)^{T}\nabla f(\ell(t))+\ell^{\prime}(t)^{T}\nabla^{2}f(\ell(t))\ell^\prime(t)\\
&=\ell^\prime(t)^{T}\left[\nabla^{2}f(\ell(t))-\ell(t)^{T}\nabla f(\ell(t))\cdot I\right]\ell^\prime(t)\\
&=\ell^\prime(t)^{T}\Hess{f(\ell(t))}\ell^\prime(t)>0
\end{align*}
for all $t\in(0,T)$, which implies that $g$ is strictly convex on $[0,T]$. Per assumption the points $\mathbf{z}_{1}$ and $\mathbf{z}_{2}$ are local minima of the problem $\min_{\mathbf{z}\in\mathcal{R}_\nu} f(\mathbf{z})$ and thus, also of the problem $\min_{t\in[0,T]}g(t)$. However, this contradicts the strict convexity of $g$. 
\end{proof}

\begin{corollary}
If $\beta > \rho n^{2}$, the problem \eqref{eqn:obj} has at least $2^{n}$ local minima. Furthermore, if $\beta>\frac{18n^{3}}{n-1}\rho$, then the problem \eqref{eqn:obj} has exactly $2^{n}$ local minima.
\label{cor:large}
\end{corollary}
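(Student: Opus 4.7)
My plan is to handle the two assertions separately, leveraging Theorem \ref{theo:large} and Lemma \ref{lem:convex}.

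\textbf{Existence of at least $2^n$ local minima.} For each sign pattern $\sigma \in \{-1,+1\}^n$ I will consider the closed spherical sign orthant $\bar{\mathcal{D}}_\sigma := \{{\bf z} \in \Sn : \sigma_k z_k \geq 0,~k \in [n]\}$. By compactness, $f$ attains its minimum at some ${\bf z}^\sigma \in \bar{\mathcal{D}}_\sigma$, and my plan is to show ${\bf z}^\sigma$ lies in the relative interior of $\bar{\mathcal{D}}_\sigma$, which immediately promotes it to a local minimum of $f$ on $\Sn$. To this end, I will compare the central value $f(\sigma/\sqrt n) \leq \lambda_1/2 + \beta/(2n)$ with the boundary infimum: on $\partial\bar{\mathcal{D}}_\sigma$ some coordinate vanishes, so the power-mean inequality forces $\|{\bf z}\|_4^4 \geq 1/(n-1)$ and hence $f({\bf z}) \geq \lambda_n/2 + \beta/[2(n-1)]$. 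The gap $\beta/[2n(n-1)] - \rho/2$ is strictly positive once $\beta > \rho n(n-1)$, which is implied by $\beta > \rho n^2$ for $n \geq 2$; the $2^n$ sign patterns then give $2^n$ distinct local minima.

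\textbf{Strict-saddle reduction for the upper bound.} The first step in the uniqueness direction is to verify that $\beta > 18 n^3 \rho/(n-1)$ exceeds the strict-saddle threshold of Theorem \ref{theo:large}: the inequality $8(1+\gamma)n^{5/2}/(n-1) \leq 18 n^3/(n-1)$ reduces to $1+\gamma \leq 9\sqrt n/4$, which admits some $\gamma > 0$ for every $n \geq 2$. Consequently every local minimum lies in $\mathcal{R}_1$, and since $z_k^2 \geq 1/(2n) > 0$ on $\mathcal{R}_1$, each coordinate has a locked sign and $\mathcal{R}_1$ decomposes into exactly $2^n$ connected components $\mathcal{C}_\sigma$. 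It only remains to exhibit at most one local minimum in each $\mathcal{C}_\sigma$.

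\textbf{Uniqueness within a component via Lemma \ref{lem:convex}.} Fix $\sigma$ and a local minimum ${\bf z} \in \mathcal{C}_\sigma$. After shifting $A \mapsto A - \lambda_n I$ (which alters $f$ only by the constant $-\lambda_n/2$ and therefore preserves every local minimum while enforcing $\lambda_1 = \rho$), the first-order condition $(2\beta z_k^2 - 2\lambda) z_k = -(A{\bf z})_k$ combined with $|z_k| \geq 1/\sqrt{2n}$ and $|(A{\bf z})_k| \leq \|A{\bf z}\|_2 \leq \rho$ yields $|z_k^2 - \lambda/\beta| \leq \rho \sqrt{2n}/(2\beta)$; summing over $k$ and using $\sum z_k^2 = 1$ delivers $|z_k^2 - 1/n| \leq \rho\sqrt{2n}/\beta$, and $|z_k + \sigma_k/\sqrt n| \geq 1/\sqrt n$ then gives $\|{\bf z} - \sigma/\sqrt n\|^2 \leq 2\rho^2 n^3/\beta^2 \leq 1/(162 n)$ for the stated threshold. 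Thus any two local minima in $\mathcal{C}_\sigma$ both lie in the spherical ball $\mathcal{R}_\nu := \{{\bf z}\in\Sn : \|{\bf z} - \sigma/\sqrt n\|^2 \leq \nu\}$ with $\nu = 4/(162 n) < 1$. On $\mathcal{R}_\nu$ each coordinate is pinched in $|z_k| \in [(1 - \sqrt 2/9)/\sqrt n,\,(1 + \sqrt 2/9)/\sqrt n]$, so $\sum v_k^2 z_k^2 \geq (1-\sqrt 2/9)^2/n$ and $\|{\bf z}\|_4^4 \leq (1+\sqrt 2/9)^2/n$, whence a direct evaluation of $H_f({\bf z})[{\bf v}] = {\bf v}^T[A + 6\beta\diag(|{\bf z}|^2) - 2\lambda I]{\bf v}$ gives $H_f \geq -\rho + 1.59\,\beta/n > 0$ uniformly. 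Lemma \ref{lem:convex} then forces the two minima to coincide. The main obstacle I anticipate is calibrating these two competing constraints on $\mathcal{R}_\nu$: it must be large enough to enclose every candidate local minimum (controlled by the first-order bound) yet small enough that the Riemannian Hessian remains positive definite throughout (required by Lemma \ref{lem:convex}), and the sharp lower bound $|z_k| \geq 1/\sqrt{2n}$ inherited from $\mathcal{R}_1$ is precisely what makes both requirements compatible at the threshold $\beta > 18 n^3 \rho/(n-1)$.
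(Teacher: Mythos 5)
Your proof is correct, and it departs from the paper's argument in both halves, most substantively in the uniqueness step. For the lower bound, the paper works with the sets $\mathcal{B}\cap\mathcal{P}_\sigma$, where $\mathcal{B}$ is defined by $\lVert\mathbf{z}\rVert_4^4\leq 1/n+1/n^2$, shows the boundary of $\mathcal{B}$ contains no minimizer of $f$, and then invokes positive definiteness of the Riemannian Hessian on $\mathcal{R}_1$ together with Lemma~\ref{lemma:son-sos} to certify an isolated local minimum. Your version is more elementary: you minimize over the full closed sign orthant, observe via the power-mean inequality that $\lVert\mathbf{z}\rVert_4^4\geq 1/(n-1)$ on its boundary, and the comparison $f(\sigma/\sqrt{n})\leq\lambda_1/2+\beta/(2n)$ pushes the minimizer into the relative interior without any second-order information; you even get the slightly sharper threshold $\beta>\rho n(n-1)$. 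For the upper bound, the paper re-runs the gradient lower-bound calculation from Lemma~\ref{lem:large-2} on a refined region $\bar{\mathcal{R}}_2$ to show all local minima land in the $2^n$ small caps $\bar{\mathcal{R}}_1$ and then applies Lemma~\ref{lem:convex}. You instead go directly through the first-order conditions: using $\lvert z_k\rvert\geq 1/\sqrt{2n}$ on $\mathcal{R}_1$ and $\lVert A\mathbf{z}\rVert\leq\rho$ (after the eigenvalue shift), you derive the explicit bound $\lVert\mathbf{z}-\sigma/\sqrt{n}\rVert^2\leq 2\rho^2 n^3/\beta^2$ on any critical point in $\mathcal{C}_\sigma$, which at the stated threshold places all candidates inside a ball of radius squared $1/(162n)$, then verify positive definiteness of the Riemannian Hessian on the larger ball $\nu=4/(162n)$ before invoking Lemma~\ref{lem:convex}. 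Both routes succeed; the paper's fits more naturally into the strict-saddle machinery already developed, while yours gives a more quantitative localization of the critical points and avoids redoing the sixth-moment estimates of Lemma~\ref{lem:large-2}. Your numerical constants check out: $6(1-\sqrt2/9)^2-2(1+\sqrt2/9)^2\approx 1.585$, $\sqrt{162}=9\sqrt2$, and $(n-1)^2/n^3\leq 1/n$ all go through, and the ball $\mathcal{R}_\nu$ does stay inside the orthant since $2/\sqrt{162n}<1/\sqrt{n}$.
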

\begin{proof}
Without loss of generality, we can assume $\lambda_n(A) = \lambda_n = 0$. We first prove that there exists at least one local minimizer in each component of the region $\mathcal{R}_{1}$ if $\beta\geq\rho n^{2}$. 
Notice that we have ${\bf z} \in \mathcal R_1$ if and only if $z_k^2 \in [\frac{1}{2n},\frac{3}{2n}]$ for all $k \in [n]$. 
Let $\sigma \in \{-1,+1\}^n$ be a given binary vector and let us define the sets $\mathcal B := \{\mathbf{z}\in\mathbb{S}^{n-1}: \| |{\bf z}|^2 - \frac{1}{n} {\mathds 1}\| \leq \frac1n \}$ and $\mathcal P_\sigma := \prod_{k \in [n]} \sigma_k \R_+$. We now show that for each possible choice of $\sigma$ the set $\mathcal B \cap \mathcal P_\sigma$ contains a local minimizer of $f$. Let us note that, for all ${\bf z} \in \Sn$, the condition $\||{\bf z}|^2 - \frac{1}{n}{\mathds 1}\| \leq \frac{1}{n}$ is equivalent to $\|{\bf z}\|_4^4 \leq \frac1n + \frac{1}{n^2}$. This observation can be used to establish $z_k \neq 0$ for all ${\bf z} \in \mathcal B \cap \mathcal P_\sigma$ and consequently, the sets $\mathcal B \cap \mathcal P_\sigma$ and $\mathcal B \cap \mathcal P_\nu$ do not intersect for different binary vectors $\sigma \neq \nu$. 
Now, for all ${\bf z} \in \mathcal D := \{{\bf z} \in \Sn: \|{\bf z}\|_4^4 = \frac1n + \frac{1}{n^2}\}$, we have $f({\bf z}) \geq \frac{\beta}{2}(n^{-1} + n^{-2})$ and 
%
setting $\mathbf{z}_{\sigma}=\sigma/\sqrt{n} \in \mathcal B \cap \mathcal P_\sigma$, we obtain
\begin{align}
\label{eq:loc-bound-sig}
f(\mathbf{z}_{\sigma})=\frac{1}{2}\mathbf{z}_{\sigma}^{T}A\mathbf{z}_{\sigma}+\frac{\beta}{2n}\leq\frac{\rho}{2}+\frac{\beta}{2n} < \frac{\beta}{2}\left[\frac{1}{n}+\frac{1}{n^{2}}\right].
\end{align}
Hence, the global minimizer ${\bf y}$ of the problem $\min_{{\bf z} \in \mathcal B \cap \mathcal P_\sigma} f({\bf z})$ satisfies $\|{\bf y}\|_4^4 < \frac1n + \frac{1}{n^2}$. This implies that the Lagrange multiplier associated with the constraint $\|{\bf z}\|_4^4 \leq \frac1n + \frac{1}{n^2}$ is zero, and the KKT conditions for ${\bf y}$ reduce to $\grad{f({\bf y})} = 0$. Due to $\mathcal B \cap \mathcal P_\sigma \subset \mathcal R_1 \cap \mathcal P_\sigma$, the Riemannian Hessian $\Hess{f({\bf y})}$ is positive definite on the tangent space $\mathcal T_{{\bf y}}\mathcal M \backslash \{0\}$ and thus, by Lemma \ref{lemma:son-sos}, the points {\bf y} is one of at least $2^n$ isolated local minimum of problem \eqref{eqn:obj}.
%

Next, we consider the case when $\beta>\frac{18n^{3}}{n-1}\rho$. We introduce the following refined versions of the $\mathcal{R}_{1}$ and $\mathcal{R}_{2}$:
\begin{align*}
&\bar{\mathcal{R}}_{1}=\bigcup_{\sigma \in \{\pm 1\}^n} \left\{\mathbf{z}\in\mathbb{S}^{n-1}: \| {\bf z} - {\textstyle \frac{1}{\sqrt{n}}}\sigma\| \leq {\textstyle \frac{2}{9\sqrt{n}}} \right \} ,\\
&\bar{\mathcal{R}}_{2}=\bigcap_{\sigma \in \{\pm 1\}^n} \left \{\mathbf{z}\in\mathbb{S}^{n-1}: \| {\bf z} - {\textstyle \frac{1}{\sqrt{n}}}\sigma\| \geq {\textstyle \frac{2}{9\sqrt{n}}},{\min}_{k\in[n]}z_{k}^{2}\geq {\textstyle\frac{n-1}{4n^{2}}} \right\}.
\end{align*}
It can be easily seen that the set $\bar{\mathcal{R}}_{1}$ consists of $2^{n}$ non-intersecting components and that the three regions $\bar{\mathcal{R}}_{1}$, $\bar{\mathcal{R}}_{2}$, and $\mathcal R_3$ cover the sphere $\mathbb{S}^{n-1}$. 
Now, let $\mathbf{z}\in \bar{\mathcal{R}}_{1}$ be arbitrary. Then, there exists $\sigma \in \{\pm 1\}^n$ such that
\begin{align*}
\left\lvert z_{k}-\frac{\sigma_{k}}{\sqrt{n}}\right\rvert\leq\frac{2}{9\sqrt{n}} \quad \text{and} \quad \left| z_k + \frac{\sigma_k}{\sqrt{n}}\right| \leq \frac{20}{9\sqrt{n}}, \quad\forall~k\in[n].
\end{align*}
Hence, it follows $\||{\bf z}|^2 - \frac{1}{n} {\mathds 1}\|_\infty \leq \frac{40}{81n} < \frac{1}{2n}$, 
which implies $\bar{\mathcal{R}}_{1} \subset \mathcal{R}_{1}$. Thus, the strong convexity property also holds on $\bar{\mathcal{R}}_{1}$. We now prove the Riemannian gradient is lower bounded on $\bar{\mathcal{R}}_{2}$. For every $\mathbf{z}\in\bar{\mathcal{R}_{2}}$, there exists $\sigma \in \{\pm1\}^n$ such that $\sigma_k z_k = |z_k|$ for all $k \in [n]$. Consequently, we obtain
\begin{align*}
\||{\bf z}|^2 - {n}^{-1}{\mathds 1}\|_\infty = \max_{k \in [n]} \left| z_k - \frac{\sigma_k}{\sqrt{n}} \right| \left| |z_k| + \frac{1}{\sqrt{n}} \right| \geq \frac{\|{\bf z} - \frac{\sigma}{\sqrt{n}} \|_\infty}{\sqrt{n}} \geq \frac{2}{9n\sqrt{n}} \end{align*}
and by mimicking the steps and estimates in the proof Lemma \ref{lem:large-2}, we get 
\begin{align*}
2[\|{\bf z}\|_6^6 - \|{\bf z}\|_4^8]  \geq\frac{n}{4}\left(\frac{2}{9n\sqrt{n}}\right)^{2}\left(\frac{n-1}{4n^{2}}\right)^{2}=\left(\frac{n-1}{36n^{3}}\right)^{2}.
\end{align*}
Thus, the norm of the Riemannian gradient is lower bounded by
\begin{align*}
\lVert\grad{f(\mathbf{z})}\rVert \geq\frac{1}{\sqrt{2}}\left[2\beta\sqrt{2[\|{\bf z}\|_6^6 - \|{\bf z}\|_4^8]}-\rho\right] \geq\frac{1}{\sqrt{2}}\left(\beta\cdot\frac{n-1}{18n^{3}}-\rho\right)>0
\end{align*}
and all local minima should need to be located in the set $\bar{\mathcal{R}}_{1}$. Applying Lemma \ref{lem:convex}, each connected component of $\bar{\mathcal{R}}_{1}$ contains at most one local minimizer and hence, problem \eqref{eqn:obj} has exactly $2^{n}$ local minima.
\end{proof}

Although the local minima of problem \eqref{eqn:obj} are not unique, all the local minima have similar objective function values. 
\begin{theorem}
Suppose that $\beta > \frac{18n^3}{n-1}\rho$. Then, it follows 
\begin{equation} \label{eq:loc-glob-diff} f(\mathbf{y})- \min_{{\bf z} \in \Sn} f(\mathbf{z}) \leq \frac{1}{18{n}} \cdot \min_{{\bf z} \in \Sn}f(\mathbf{z}),
\end{equation}
for all local minimizer ${\bf y} \in \Sn$ of problem \eqref{eqn:obj}.
\end{theorem}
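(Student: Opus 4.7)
The plan is to leverage the structural result established in Corollary \ref{cor:large}: under the hypothesis $\beta > 18 n^3 \rho / (n-1)$, problem \eqref{eqn:obj} has exactly $2^n$ local minimizers, and every local minimizer $\mathbf{y}$ lies in a unique connected component of $\bar{\mathcal{R}}_1$, namely the one containing $\mathbf{z}_\sigma = \sigma/\sqrt{n}$ for some $\sigma \in \{\pm 1\}^n$. My first step is to promote this localization into an objective-value comparison. Since $\bar{\mathcal{R}}_1 \subset \mathcal{R}_1$, Lemma \ref{lem:large-1} yields uniform positive-definiteness of $\Hess f$ on the component, and the geodesic-convexity argument inside the proof of Lemma \ref{lem:convex} shows the component is geodesically convex (the bound $\|\ell(t)-\mathbf{z}_\sigma\|^2 \le \max\{\|\ell(0)-\mathbf{z}_\sigma\|^2, \|\ell(T)-\mathbf{z}_\sigma\|^2\}$ there ensures the connecting geodesic stays inside). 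Strict geodesic convexity of $f$ on this component, combined with uniqueness of the critical point, forces $\mathbf{y}$ to be the minimizer of $f$ on the $\sigma$-component, so in particular $f(\mathbf{y}) \leq f(\mathbf{z}_\sigma)$.

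The next step plugs in the explicit bound \eqref{eq:loc-bound-sig}, which (under the WLOG shift $\lambda_n = 0$ already used in Corollary \ref{cor:large}) gives
\[
  f(\mathbf{y}) \;\leq\; f(\mathbf{z}_\sigma) \;\leq\; \frac{\rho}{2} + \frac{\beta}{2n}.
\]
The matching lower bound on $\min f$ is elementary: for every $\mathbf{z} \in \Sn$, positive semidefiniteness of $A$ and the power-mean inequality $\|\mathbf{z}\|_4^4 \geq \|\mathbf{z}\|^4/n = 1/n$ together yield $f(\mathbf{z}) \geq \beta/(2n)$, hence $\min_{\mathbf{z} \in \Sn} f(\mathbf{z}) \geq \beta/(2n)$. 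Subtracting the two estimates gives $f(\mathbf{y}) - \min f \leq \rho/2$, so
\[
  \frac{f(\mathbf{y}) - \min f}{\min f} \;\leq\; \frac{\rho/2}{\beta/(2n)} \;=\; \frac{n\rho}{\beta} \;<\; \frac{n-1}{18 n^2} \;\leq\; \frac{1}{18 n},
\]
where the penultimate step uses the hypothesis $\beta > 18 n^3 \rho/(n-1)$ and the last step uses $(n-1)/n \leq 1$.

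I do not anticipate a real obstacle here; all the heavy lifting has already been done by Lemmas \ref{lem:large-1}--\ref{lem:large-3} and Corollary \ref{cor:large}. The only genuinely nontrivial point is the first step, i.e., verifying that $f(\mathbf{y}) \leq f(\mathbf{z}_\sigma)$. This amounts to checking that strict geodesic convexity of $f$ on the $\sigma$-component (which holds because the component is contained in $\mathcal{R}_1$ and is geodesically convex by the proof of Lemma \ref{lem:convex}) upgrades the unique stationary point in the component to the component-wide minimizer; beyond that, the argument reduces to the two one-line bounds $f(\mathbf{z}_\sigma) \leq \rho/2 + \beta/(2n)$ and $\min f \geq \beta/(2n)$, together with the quantitative lower bound on $\beta$.
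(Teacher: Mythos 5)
Your proof is correct and follows essentially the same route as the paper: reduce to $f(\mathbf{y}) \leq f(\mathbf{z}_\sigma)$ via the localization from Corollary \ref{cor:large}, then combine the bound \eqref{eq:loc-bound-sig} with the elementary lower bound $\min_{\mathbf{z}\in\Sn} f(\mathbf{z}) \geq \beta/(2n)$ and the hypothesis on $\beta$. The only cosmetic difference is that you justify $f(\mathbf{y}) \leq f(\mathbf{z}_\sigma)$ by re-running the geodesic-convexity argument of Lemma \ref{lem:convex} on the $\bar{\mathcal{R}}_1$ component, whereas the paper cites the equivalent fact, already established inside the proof of Corollary \ref{cor:large}, that $\mathbf{y}$ globally solves the restricted problem $\min_{\mathbf{z}\in\mathcal{B}\cap\mathcal{P}_\sigma} f(\mathbf{z})$ whose feasible set contains $\mathbf{z}_\sigma$.
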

\begin{proof} Without loss of generality, we can assume that the smallest eigenvalue of the matrix $A$ is zero. Then, for all ${\bf z} \in \Sn$, we have $f({\bf z}) \geq \frac{\beta}{2} \|{\bf z}\|_4^4 \geq \frac{\beta}{2n}$.
According to the analysis in Corollary \ref{cor:large}, each component ${\mathcal B} \cap \mathcal P_\sigma$, $\sigma \in \{\pm1\}^n$, contains exactly one local minimizer ${\bf y}$ of problem \eqref{eqn:obj} which is also the unique global minimizer of the restricted problem $\min_{{\bf z} \in \mathcal B \cap \mathcal P_\sigma} f({\bf z})$. Together with \eqref{eq:loc-bound-sig}, this yields
\[ f({\bf y}) \leq f({\bf z}_\sigma) \leq \frac{\rho}{2} + \frac{\beta}{2n} \leq \left[ 1 + \frac{n\rho}{\beta} \right] f({\bf z}), \quad \forall~{\bf z} \in \Sn. \]
%
Finally, the estimate \eqref{eq:loc-glob-diff} can be established via minimizing the latter expression with respect to ${\bf z}$ and using the bound on $\beta$.
\end{proof}

In the remainder of this section, we present an example demonstrating that the bound $\beta \approx C \rho n^{3/2}$ and the dependence on $n^{3/2}$ can not be further improved and that the strict-saddle property is violated whenever a smaller coefficient is chosen.

\begin{example} Let $C>0$ and $\epsilon > 0$ be given constants and suppose $\beta = C n^{3/2-\epsilon}$. In the following, we construct a specific matrix $A \in \R^{n \times n}$ and a point ${\bf z} \in \mathbb{S}^{n-1}$ such that the three conditions of the strict-saddle property do not hold at ${\bf z}$. We set
\[ z_1 = \left[\frac{1}{3 n- 2}\right]^\frac12, \quad z_k = \left[\frac{3}{3 n - 2}\right]^\frac12, \quad k \geq 2, \] 
${\bf u} = 2\beta \|{\bf z}\|_4^4 \cdot {\bf z} - 2\beta \diag(|{\bf z}|^2) {\bf z}$, and $A = \alpha {\bf w}{\bf w}^T + {\bf z}{\bf u}^T + {\bf u}{\bf z}^T$, where
\[ w_1 = - \left[ \frac{3n-3}{3n-2} \right]^\frac12, \quad w_k = \left[ \frac{1}{(3n-2)(n-1)} \right]^\frac12, \, k \geq 2, \quad \alpha =  - \frac{16\beta}{(3n-2)^2}. \]
Then, we have $\|{\bf z}\| = \|{\bf w}\| = 1$, $\|{\bf z}\|_4^4 = (9n-8) / (3n-2)^2$, and
\begin{align*} \|{\bf u}\|^2  & = 4\beta^2 [\|{\bf z}\|^6_6 - \|{\bf z}\|_4^8] = 4\beta^2 \left[ \frac{1+27(n-1)}{(3 n-2)^{3}}-\|{\bf z}\|_4^8 \right]  = \frac{48(n-1)}{(3 n-2)^{4}} \cdot \beta^2. \end{align*}
Furthermore, it holds that ${\bf u}^T {\bf z} = {\bf w}^T {\bf z}= 0$
%
%
which implies $A{\bf z} = {\bf u}$ and $\grad{f({\bf z})} = 0$. The eigenvalues of the matrices $\alpha {\bf w}{\bf w}^T$ and ${\bf z}{\bf u}^T + {\bf u}{\bf z}^T$ are $0$, ..., $0$, $\alpha$ and $\|{\bf u}\|$, $0$, ..., $0$, $- \|{\bf u}\|$, respectively. Thus, by Weyl's inequality, it follows
\[ \rho = \lambda_1(A) - \lambda_n(A) \leq 2 \|{\bf u}\| - \alpha = 8\beta \cdot \frac{2 + \sqrt{3n-3}}{(3n-2)^2} \leq \bar C n^{-\epsilon} = O(n^{-\epsilon}),  \]
for some constant $\bar C$. Let us now consider an arbitrary vector ${\bf v} \in \mathcal T_{\bf z}\mathcal M \cap \Sn$.  We have $\sum_{k=2}^n v_k^2 = 1 - v_1^2$, $v_1 + \sqrt{3} \sum_{k=2}^n v_k = 0$, and
\[  {\bf v}^T {\bf w} = v_1 w_1 - \frac{1}{\sqrt{3}} \left[ \frac{1}{(3n-2)(n-1)} \right]^\frac12 v_1 = -\left[ \frac{3n-2}{3n-3} \right]^{\frac12}  \cdot v_1.  \]
Hence, we obtain
\begin{align*} H_f({\bf z})[{\bf v}] & = \alpha ({\bf v}^T{\bf w})^2 + 6\beta {\bf v}^T \diag(|{\bf z}|^2) {\bf v} - 2\beta \|{\bf z}\|_4^4 \\ & = \frac{3n-2}{3n-3} \alpha v_1^2 - \frac{12\beta}{3n-2} v_1^2 + \frac{18\beta}{3n-2} - 2\beta \|{\bf z}\|_4^4. \end{align*}
Since $H_f({\bf z})[{\bf v}]$ is linear in $v_1^2$, its maximum and minimum value are attained at the boundary of the range interval $[\underline{\nu},\overline{\nu}]$ of $v_1^2$. Notice that we have $\underline{\nu} = 0$ and $\overline{\nu}$ can be found by discussing the optimization problem
\[ \min_{\bf v}~-v_1 \quad \st \quad v_1 + \sqrt{3} \sum_{k=2}^n v_k = 0, \quad \|{\bf v}\| = 1 \]
and its associated KKT conditions. In particular, it can be shown that $\overline{\nu} = (3n-3)/(3n-2)$. In the case $v_1^2 = 0$, we obtain $H_f({\bf z})[{\bf v}] > 0$ and for $v_1^2 = \overline{\nu}$, we have
\[ H_f({\bf z})[{\bf v}] = \alpha - \frac{36\beta (n-1)}{(3n-2)^2} + \frac{18\beta}{3n-2} - \frac{2\beta(9n-8)}{(3n-2)^2} = \alpha + \frac{16\beta}{(3n-2)^2} = 0. \]
Consequently, we can infer $\min_{{\bf v} \in \mathcal T_{\bf z}\mathcal M \cap \Sn} H_f({\bf z})[{\bf v}] = 0$ and $H_f({\bf z})[{\bf v}] \geq 0$ for all ${\bf v} \in \mathcal T_{\bf z}\mathcal M \cap \Sn$, which shows that none of the conditions of the strict-saddle property hold at ${\bf z}$. Thus, the order $n^{3/2}$ can not be improved in the deterministic case.
\end{example}

\begin{figure}[t]
\centering
\setlength{\belowcaptionskip}{-6pt}
\begin{tabular}{cc}
\subfloat[$\beta = 0.2$]{\includegraphics[trim={0 1.5cm 0 2cm},clip,width=5.5cm]{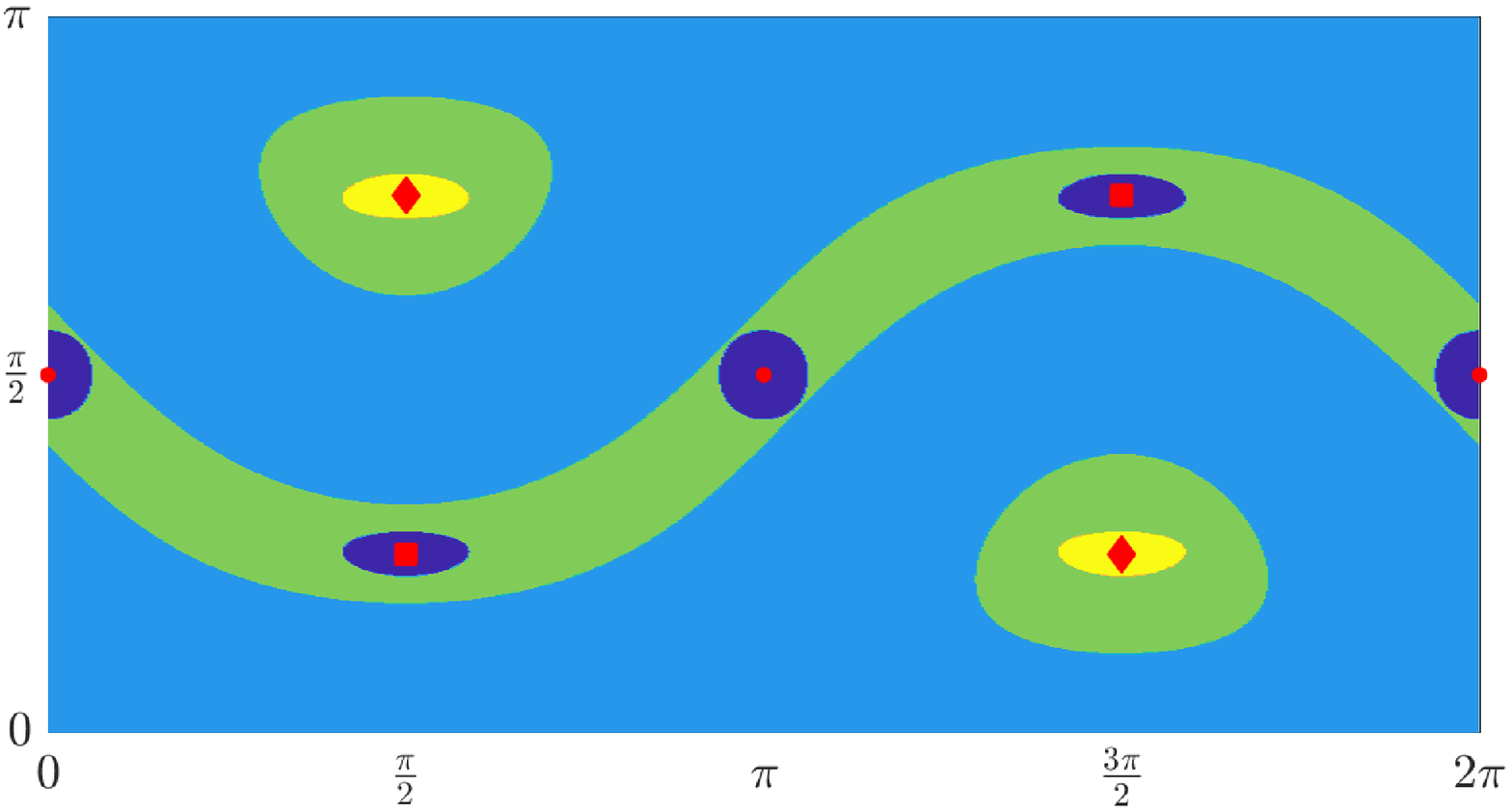}} &
\subfloat[$\beta = 3.75$]{\includegraphics[trim={0 1.5cm 0 2cm},clip,width=5.5cm]{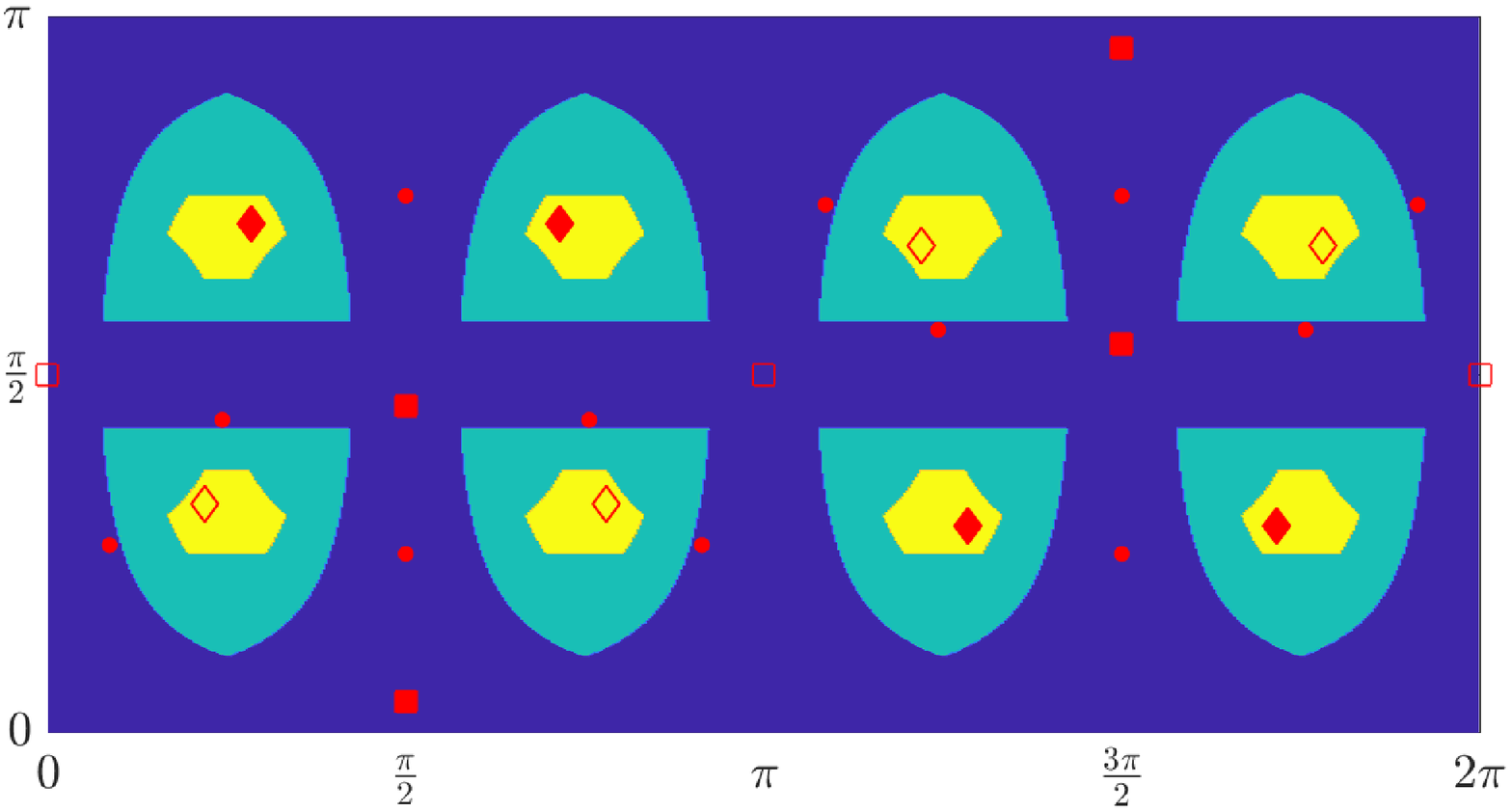}} 
\end{tabular}
\caption{Plot of the sets $\mathcal R_1$--$\mathcal R_3$ for the problem \eqref{eq:example-A}. Figure (a) depicts the regions introduced in section \ref{sec:small} for a small interaction coefficient $\beta = 0.2$. The overlap of the sets $\mathcal R_1$--$\mathcal R_2$ and $\mathcal R_2$--$\mathcal R_3$ is shown in green. The set $\mathcal R_1$ is the union of the yellow and the two surrounding green areas, while $\mathcal R_2$ is the union of all green and light blue areas. The region $\mathcal R_3$ coincides with the union of the dark blue sets and the enclosing green area. Figure (b) shows the regions introduced in section \ref{sec:large} for large $\beta$. Here, the (disjoint) yellow, turquoise, and dark blue areas directly correspond to the sets $\mathcal R_1$, $\mathcal R_2$, and $\mathcal R_3$, respectively. The red point marker again depicts the location of saddle points. Non-filled and filled diamond markers are used for local and global minima. Local and global maxima are marked by non-filled and filled squares.}
\label{figure:areas}
\end{figure}

\subsection{Small interaction coefficient}
\label{sec:small}

In the following, we discuss the geometric landscape of problem (\ref{eqn:obj}) for small interaction coefficients. We additionally assume that there is a positive spectral gap $\delta=\lambda_{n-1}-\lambda_{n}>0$ between the two smallest eigenvalues of the matrix $A$. As shown by Marvcenko \cite{marvcenko1967distribution}, this condition holds with probability $1$ when $A$ is a Gaussian random matrix.

Let us recall that the eigenvalue decomposition of $A$ is given by $A = P \Lambda P^T$, where $\Lambda = \diag(\lambda_1,\lambda_2,...,\lambda_n)$ and $P = (\mathbf{p}_{1},\mathbf{p}_{2},...,\mathbf{p}_{n})$ is an orthogonal matrix. Similar to the method used in section \ref{sec:large}, we now divide $\mathbb{S}^{n-1}$ into three sub-regions:
\begin{itemize}
  \item[1.](Strong convexity) $\mathcal{R}_{1}: =\{\mathbf{z} \in \mathbb{S}^{n-1}: {\bf z} =\sum_{k=1}^n \alpha_{k}\mathbf{p}_{k}, \, \alpha_{n}^{2}\geq\frac{(2+\gamma)\beta+\rho}{\delta+\rho}\}$.
  \item[2.](Large gradient) $\mathcal{R}_{2} :=\{\mathbf{z} \in\mathbb{S}^{n-1}: \|A{\bf z}\|^2 - ({\bf z}^T A{\bf z})^2 \geq (\frac23+\gamma)^2\beta^{2}\}$.
  \item[3.](Negative curvature) $\mathcal{R}_{3} := \{\mathbf{z} \in\mathbb{S}^{n-1}: {\bf z}=\sum_{k = 1}^n \alpha_{k}\mathbf{p}_{k}, \, \alpha_{n}^{2}\leq\frac{\delta-(4+\gamma)\beta}{\delta+\rho}\}$.
\end{itemize}

An exemplary illustration of the sets $\mathcal R_1$--$\mathcal R_3$ is given in Figure \ref{figure:areas}. We first show that the Riemannian Hessian is uniformly positive definite on $\mathcal{R}_{1}$.
\begin{lemma}
Suppose that the gap between the two smallest eigenvalues of the matrix $A$ satisfies $\delta:=\lambda_{n-1}-\lambda_{n}>0$ and let $\beta, \gamma > 0$ be given. Then, for all $\mathbf{z}\in\mathcal{R}_{1}$ and all $\mathbf{v}\in\mathcal{T}_{\mathbf{z}}\mathcal{M}\cap\mathbb{S}^{n-1}$, it follows $H_{f}(\mathbf{z})[\mathbf{v}]\geq\gamma\beta$.
\label{lem:small-1}
\end{lemma}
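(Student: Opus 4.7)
The plan is to exploit the eigenbasis of $A$ and treat the quartic contribution as a small perturbation. I expand $\mathbf{z}=\sum_{k=1}^{n}\alpha_{k}\mathbf{p}_{k}$ and $\mathbf{v}=\sum_{k=1}^{n}\mu_{k}\mathbf{p}_{k}$ so that $\|\mathbf{z}\|^{2}=\|\mathbf{v}\|^{2}=1$, $\mathbf{z}^{T}A\mathbf{z}=\sum_{k}\alpha_{k}^{2}\lambda_{k}$, and $\mathbf{v}^{T}A\mathbf{v}=\sum_{k}\mu_{k}^{2}\lambda_{k}$. The tangency condition $\mathbf{v}\in\mathcal{T}_{\mathbf{z}}\mathcal{M}$ translates into $\sum_{k}\alpha_{k}\mu_{k}=0$, i.e. $\alpha_{n}\mu_{n}=-\sum_{k<n}\alpha_{k}\mu_{k}$, and Cauchy--Schwarz then yields $\alpha_{n}^{2}\mu_{n}^{2}\leq(1-\alpha_{n}^{2})(1-\mu_{n}^{2})$, which, after expanding, simplifies to the clean bound $\mu_{n}^{2}\leq 1-\alpha_{n}^{2}$.

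Recalling the real-case formula $H_{f}(\mathbf{z})[\mathbf{v}]=\mathbf{v}^{T}A\mathbf{v}-\mathbf{z}^{T}A\mathbf{z}+6\beta\sum_{k}v_{k}^{2}z_{k}^{2}-2\beta\|\mathbf{z}\|_{4}^{4}$ for unit $\mathbf{v}$, I would estimate the two quadratic terms separately. Since $\lambda_{k}\geq\lambda_{n-1}=\lambda_{n}+\delta$ for $k\leq n-1$,
\begin{equation*}
\mathbf{v}^{T}A\mathbf{v}\;\geq\;\mu_{n}^{2}\lambda_{n}+(1-\mu_{n}^{2})(\lambda_{n}+\delta)\;=\;\lambda_{n}+(1-\mu_{n}^{2})\delta\;\geq\;\lambda_{n}+\alpha_{n}^{2}\delta,
\end{equation*}
while $\mathbf{z}^{T}A\mathbf{z}\leq\alpha_{n}^{2}\lambda_{n}+(1-\alpha_{n}^{2})\lambda_{1}=\lambda_{n}+(1-\alpha_{n}^{2})\rho$. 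Subtracting yields $\mathbf{v}^{T}A\mathbf{v}-\mathbf{z}^{T}A\mathbf{z}\geq\alpha_{n}^{2}(\delta+\rho)-\rho$.

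For the quartic contribution I simply drop the non-negative term $6\beta\sum_{k}v_{k}^{2}z_{k}^{2}\geq 0$ and use $\|\mathbf{z}\|_{4}^{4}\leq\|\mathbf{z}\|_{2}^{4}=1$, giving $6\beta\sum_{k}v_{k}^{2}z_{k}^{2}-2\beta\|\mathbf{z}\|_{4}^{4}\geq -2\beta$. Adding everything together and substituting the defining inequality of $\mathcal{R}_{1}$,
\begin{equation*}
H_{f}(\mathbf{z})[\mathbf{v}]\;\geq\;\alpha_{n}^{2}(\delta+\rho)-\rho-2\beta\;\geq\;\bigl[(2+\gamma)\beta+\rho\bigr]-\rho-2\beta\;=\;\gamma\beta,
\end{equation*}
which is exactly the claim.

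The computation is essentially tight, so the only real obstacle is making sure the very loose discard of the quartic cross term $6\beta\sum_{k}v_{k}^{2}z_{k}^{2}$ is acceptable; it is, because in the small-$\beta$ regime the quartic terms are dominated by the spectral gap contribution $\alpha_{n}^{2}(\delta+\rho)$ guaranteed by membership in $\mathcal{R}_{1}$. A secondary point worth double-checking is the Cauchy--Schwarz reduction to $\mu_{n}^{2}\leq 1-\alpha_{n}^{2}$, since it is this bound (rather than a cruder $\mu_{n}^{2}\leq 1$) that provides the crucial $\alpha_{n}^{2}\delta$ gain in the lower bound on $\mathbf{v}^{T}A\mathbf{v}$.
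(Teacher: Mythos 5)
Your proof is correct and follows essentially the same approach as the paper: expansion in the eigenbasis of $A$, Cauchy--Schwarz on the tangency condition to obtain $\nu_n^2 \leq 1-\alpha_n^2$, the spectral estimate $\mathbf{v}^{T}A\mathbf{v}-\mathbf{z}^{T}A\mathbf{z}\geq\alpha_n^2(\delta+\rho)-\rho$, and the crude bound $6\beta\sum_k z_k^2 v_k^2 - 2\beta\|\mathbf{z}\|_4^4 \geq -2\beta$. The only cosmetic difference is that the paper bounds the two quadratic forms jointly via $\sum_k\lambda_k(\nu_k^2-\alpha_k^2)$ while you bound them separately, but the resulting inequality is the same.
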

\begin{proof}
Let ${\bf v} \in \mathcal{T}_{\mathbf{z}}\mathcal{M}\cap\mathbb{S}^{n-1}$ be arbitrary with $\mathbf{v}=\sum_{k\in[n]} \nu_{k}\mathbf{p}_{k}$. Then, we have $\sum_{k\in[n]}\alpha_{k} \nu_{k}=0$ and $\sum_{k\in[n]}\nu_{k}^{2}=1$ and the Cauchy inequality implies
\begin{align*}
\vert \nu_{n}\alpha_{n}\rvert&=\left\lvert{\sum}_{k=1}^{n-1}\nu_{k}\alpha_{k}\right\rvert\leq \left[{\sum}_{k=1}^{n-1}\nu_{k}^{2}\right]^\frac12\left[{\sum}_{k=1}^{n-1}\alpha_{k}^{2}\right]^\frac12 = \sqrt{(1-\nu_{n}^{2})(1-\alpha_{n}^{2})}.
\end{align*}
Thus, it follows $\nu_{n}^{2}\alpha_{n}^{2}\leq(1-\nu_{n}^{2})(1-\alpha_{n}^{2})$ and $\alpha_{n}^{2}\leq1-\nu_{n}^{2}$ and, due to $\|{\bf z}\|_4^4 \leq 1$, we obtain
\begin{align*}
H_{f}(\mathbf{z})[\mathbf{v}]&=\mathbf{v}^{T}A\mathbf{v}-\mathbf{z}^{T}A\mathbf{z}+6\beta{\sum}_{k=1}^n z_{k}^{2}v_{k}^{2} - 2\beta \|{\bf z}\|^4_4 \\
&\geq {\sum}_{k=1}^n \lambda_{k}(\nu_{k}^{2}-\alpha_{k}^{2})-2\beta \\
&\geq{\sum}_{k=1}^{n-1}(\lambda_{n}+\delta)\nu_{k}^{2}+\lambda_{n}\nu_{n}^{2}-{\sum}_{k=1}^{n-1}(\lambda_{n}+\rho)\alpha_{k}^{2}-\lambda_{n}\alpha_{n}^{2}-2\beta\\
&=(1-\nu_{n}^{2})\delta-(1-\alpha_{n}^{2})\rho-2\beta \\ 
& \geq \alpha_{n}^{2}\delta-(1-\alpha_{n}^{2})\rho-2\beta = \alpha_{n}^{2}(\delta+\rho)-\rho-2\beta \geq \gamma\beta,
\end{align*}
as desired.
\end{proof}

Next, we prove that the norm of the Riemannian gradient is strictly larger than zero on the set $\mathcal{R}_{2}$.
\begin{lemma}
For all $\mathbf{z}\in\mathcal{R}_{2}$, it holds that $\lVert\grad{f(\mathbf{z})}\rVert_{2}\geq \gamma\beta$ .
\label{lem:small-2}
\end{lemma}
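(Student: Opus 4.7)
\bigskip

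The plan is to mirror the splitting used in the proof of Lemma \ref{lem:large-2}, but to exploit it in the opposite direction: here the quadratic piece of the gradient is the dominant term, and the quartic piece has to be absorbed as a lower-order perturbation. Concretely, since both of the tangent vectors $A{\bf z} - ({\bf z}^{T}A{\bf z}){\bf z}$ and $\diag(|{\bf z}|^{2}){\bf z} - \|{\bf z}\|_{4}^{4}{\bf z}$ lie in $\mathcal T_{\bf z}\mathcal M$, I would write
\[
\grad{f({\bf z})} = [A{\bf z} - ({\bf z}^{T}A{\bf z}){\bf z}] + 2\beta \, [\diag(|{\bf z}|^{2}){\bf z} - \|{\bf z}\|_{4}^{4}{\bf z}]
\]
and apply the reverse triangle inequality. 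Using the identities $\|A{\bf z} - ({\bf z}^{T}A{\bf z}){\bf z}\|^{2} = \|A{\bf z}\|^{2} - ({\bf z}^{T}A{\bf z})^{2}$ and $\|\diag(|{\bf z}|^{2}){\bf z} - \|{\bf z}\|_{4}^{4}{\bf z}\|^{2} = \|{\bf z}\|_{6}^{6} - \|{\bf z}\|_{4}^{8}$ (both of which follow directly from $\|{\bf z}\|=1$), this gives
\[
\|\grad{f({\bf z})}\| \;\geq\; \sqrt{\|A{\bf z}\|^{2} - ({\bf z}^{T}A{\bf z})^{2}} \;-\; 2\beta \sqrt{\|{\bf z}\|_{6}^{6} - \|{\bf z}\|_{4}^{8}}.
\]
The definition of $\mathcal R_{2}$ immediately bounds the first square root from below by $(\tfrac{2}{3}+\gamma)\beta$, so the whole problem reduces to a clean upper bound on $g({\bf z}) := \|{\bf z}\|_{6}^{6} - \|{\bf z}\|_{4}^{8}$.

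The main technical step is to show $g({\bf z}) \leq 27/256$ for all ${\bf z} \in \Sn$. Set $a_{k} := z_{k}^{2}$, $m := \sum_{k}a_{k}^{2} = \|{\bf z}\|_{4}^{4}$, and $m_{\infty} := \max_{k}a_{k}$. The simple bound $a_{k}^{3} \leq m_{\infty}\,a_{k}^{2}$ gives $\|{\bf z}\|_{6}^{6} \leq m_{\infty}\cdot m$, hence $g \leq m(m_{\infty}-m)$. The constraints $\sum a_{k} = 1$, $a_{k}\geq 0$ also force $m_{\infty}^{2} \leq m \leq m_{\infty}$. I would then split on $m_\infty$: when $m_\infty \leq \tfrac{1}{2}$, the vertex $m=m_{\infty}/2$ of the parabola $m \mapsto m(m_\infty-m)$ is admissible and the maximum is $m_{\infty}^{2}/4 \leq 1/16$; when $m_\infty > \tfrac{1}{2}$, the vertex falls to the left of $[m_\infty^{2}, m_\infty]$, so the parabola is decreasing there and its maximum on this interval is $m_{\infty}^{3}(1-m_{\infty})$, which attains its own maximum $27/256$ at $m_\infty = 3/4$.

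Combining both steps gives $2\beta\sqrt{g({\bf z})} \leq 2\beta\cdot\sqrt{27/256} = (3\sqrt{3}/8)\,\beta$, and the numerical inequality $3\sqrt{3}/8 < 2/3$ yields
\[
\|\grad{f({\bf z})}\| \;\geq\; \left[\tfrac{2}{3} + \gamma - \tfrac{3\sqrt{3}}{8}\right]\beta \;\geq\; \gamma\beta
\]
for all ${\bf z} \in \mathcal{R}_{2}$, which is the claim. The only real obstacle is the sharp bound on $g$: generic Popoviciu or Bhatia--Davis type estimates for a bounded random variable only deliver the slack bound $g \leq m(1-m) \leq 1/4$, which would force the defining constant of $\mathcal R_{2}$ to be at least $1$ rather than $2/3$. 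The gain comes from coupling the H\"older bound with the inequality $m \geq m_\infty^{2}$, which restricts $m$ away from the vertex of the parabola in the regime $m_\infty > 1/2$ and is precisely what produces the constant $27/256 < 1/9$.
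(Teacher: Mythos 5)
Your argument is correct and follows the same overall architecture as the paper's proof: split $\grad f({\bf z})$ into the quadratic tangent piece $A{\bf z}-({\bf z}^{T}A{\bf z}){\bf z}$ and the quartic tangent piece $2\beta\,[\diag(|{\bf z}|^{2}){\bf z}-\|{\bf z}\|_{4}^{4}{\bf z}]$, apply the reverse triangle inequality, lower bound the first term directly from the definition of $\mathcal R_{2}$, and upper bound the second by showing $\|{\bf z}\|_{6}^{6}-\|{\bf z}\|_{4}^{8}\leq 27/256$. The only real difference is how the last bound is obtained. You parametrize in two variables $(m,m_\infty)=(\|{\bf z}\|_4^4,\max_k z_k^2)$, derive the constraints $m_\infty^2\le m\le m_\infty$, and run a case analysis on whether $m_\infty\le\tfrac12$, eventually optimizing $m_\infty^3(1-m_\infty)$. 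The paper's route is shorter: since $p$-norms decrease in $p$, $\|{\bf z}\|_6^6\le\|{\bf z}\|_4^6$, so $\|{\bf z}\|_6^6-\|{\bf z}\|_4^8\le\|{\bf z}\|_4^6-\|{\bf z}\|_4^8$, which is a \emph{one}-variable function $x\mapsto x^6-x^8$ with maximum $27/256$ at $x=\sqrt{3}/2$. Both give exactly the same constant, but the paper's observation that $\|{\bf z}\|_6\le\|{\bf z}\|_4$ collapses your two-variable constrained problem to a trivial calculus exercise and removes the need for the case split; you may want to remember that trick, as it sidesteps the Popoviciu-type detour you flagged as the main obstacle.
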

\begin{proof}
As in the proof of Lemma \ref{lem:large-2}, we have 
\begin{align*}
\left\lVert\left(\diag(|{\bf z}|^2)- \|{\bf z}\|_4^4 \cdot I_{n}\right)\mathbf{z}\right\rVert ^{2} = \|{\bf z}\|_6^6 - \|{\bf z}\|_4^8 \leq \|{\bf z}\|_4^6 (1 - \|{\bf z}\|_4^2) \leq \frac14 \left[ \frac34\right]^3,
\end{align*}
where the last estimate follows from the fact that the mapping $x \mapsto x^6-x^8$ attains its global maximum at $x = \pm \frac{\sqrt{3}}{2}$. 
Hence, we obtain
\begin{align*}
\lVert\grad{f(\mathbf{z})}\rVert_{2}&=\left\lVert(A-\mathbf{z}^{T}A\mathbf{z}\cdot I_{n})\mathbf{z}+2\beta\left(\diag(|{\bf z}|^2)- \|{\bf z}\|_4^4 \cdot I_{n}\right)\cdot\mathbf{z}\right\rVert\\
&\geq\lVert(A-\mathbf{z}^{T}A\mathbf{z}\cdot I_{n})\mathbf{z}\rVert-2\beta\left\lVert\left(\diag(|{\bf z}|^2)- \|{\bf z}\|_4^4 \cdot I_{n}\right)\mathbf{z}\right\rVert \\
&\geq\lVert(A-\mathbf{z}^{T}A\mathbf{z}\cdot I_{n})\mathbf{z}\rVert -\frac{3\sqrt{3}}{8} \beta= \left[ \frac23 - \frac{3\sqrt{3}}{8} \right ] \beta + \gamma\beta \geq \gamma\beta.
\end{align*}
\end{proof}

Finally, for points in the region $\mathcal{R}_{3}$, we construct directions along which the curvature of the objective function is strictly negative.
\begin{lemma}
Suppose that the gap between the two smallest eigenvalues of the matrix $A$ satisfies $\delta:=\lambda_{n-1}-\lambda_{n}>0$ and let $\gamma > 0$ be given. If $\beta \leq (4+\gamma)^{-1}\delta$, then for all $\mathbf{z}\in\mathcal{R}_{3}$ there exists $\mathbf{v}\in\mathcal{T}_{\mathbf{z}}\mathcal{M}\cap\mathbb{S}^{n-1}$ such that $H_{f}(\mathbf{z})[\mathbf{v}]\leq-\gamma\beta$.
\label{lem:small-3}
\end{lemma}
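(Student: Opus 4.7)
\medskip
\noindent\textbf{Proof proposal.} The plan is to construct an explicit negative-curvature direction by projecting the smallest eigenvector $\mathbf{p}_n$ of $A$ onto $\mathcal{T}_\mathbf{z}\mathcal{M}$. Specifically, writing $\mathbf{z} = \sum_{k=1}^n \alpha_k \mathbf{p}_k$ with $\alpha_n^2 < 1$ (guaranteed by the definition of $\mathcal{R}_3$ since $(4+\gamma)\beta > 0$), I would set
\[
\mathbf{v} := \frac{\mathbf{p}_n - \alpha_n \mathbf{z}}{\sqrt{1-\alpha_n^2}}.
\]
It is immediate that $\mathbf{v}^T\mathbf{z} = 0$ and $\|\mathbf{v}\| = 1$, so $\mathbf{v} \in \mathcal T_\mathbf{z}\mathcal M \cap \mathbb S^{n-1}$. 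The proof then decomposes the curvature as
\[
H_f(\mathbf{z})[\mathbf{v}] = (\mathbf{v}^T A\mathbf{v} - \mathbf{z}^T A \mathbf{z}) + \left[6\beta\sum_{k=1}^n z_k^2 v_k^2 - 2\beta\|\mathbf{z}\|_4^4\right],
\]
and bounds the two bracketed terms separately.

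\medskip
\noindent For the quadratic part, using $A\mathbf{p}_n = \lambda_n \mathbf{p}_n$, a direct expansion yields
\[
\mathbf{v}^T A \mathbf{v} - \mathbf{z}^T A \mathbf{z} = -\,(1-2\alpha_n^2)\,\frac{\mathbf{z}^T A \mathbf{z} - \lambda_n}{1-\alpha_n^2}.
\]
Since $\mathbf{z}^T A \mathbf{z} - \lambda_n = \sum_{k=1}^{n-1}\alpha_k^2(\lambda_k - \lambda_n) \geq \delta(1-\alpha_n^2)$, and since $\rho \geq \delta$ implies $\alpha_n^2 \leq \delta/(\delta+\rho) \leq \tfrac12$, I obtain
\[
\mathbf{v}^T A \mathbf{v} - \mathbf{z}^T A \mathbf{z} \leq -(1-2\alpha_n^2)\,\delta.
\]
For the quartic part, AM-GM gives $\sum_k z_k^2 v_k^2 \leq \tfrac12(\|\mathbf{z}\|_4^4+\|\mathbf{v}\|_4^4)$, together with $\|\mathbf{z}\|_4^4,\|\mathbf{v}\|_4^4 \leq 1$, so
\[
6\beta\sum_{k=1}^n z_k^2 v_k^2 - 2\beta\|\mathbf{z}\|_4^4 \leq \beta\|\mathbf{z}\|_4^4 + 3\beta\|\mathbf{v}\|_4^4 \leq 4\beta.
\]

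\medskip
\noindent The final step is to use the defining inequality of $\mathcal{R}_3$ to absorb the $4\beta$. Substituting $\alpha_n^2 \leq (\delta-(4+\gamma)\beta)/(\delta+\rho)$ into $(1-2\alpha_n^2)\delta$ and cross-multiplying, the desired estimate $(1-2\alpha_n^2)\delta \geq (4+\gamma)\beta$ reduces to $(\rho-\delta)(\delta-(4+\gamma)\beta)\geq 0$, which holds because $\rho\geq\delta$ and the assumption $\beta\leq\delta/(4+\gamma)$. Combining then gives $H_f(\mathbf{z})[\mathbf{v}] \leq -(4+\gamma)\beta + 4\beta = -\gamma\beta$, as required.

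\medskip
\noindent The main obstacle is the coupling between the quadratic and quartic contributions: the quartic term contributes an $O(\beta)$ positive amount, and the quadratic term must be made sufficiently negative to dominate it. The technical point is verifying $\alpha_n^2 \leq \tfrac12$ (which needs $\rho\geq\delta$, a free consequence of $\lambda_1\geq\lambda_{n-1}$) so that the sign $(1-2\alpha_n^2)$ works in the right direction, and then tracking that the $\mathcal R_3$ threshold is calibrated exactly to beat the $4\beta$ from the quartic slack.
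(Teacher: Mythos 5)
Your proof is correct and follows essentially the same route as the paper: it takes the negative-curvature direction coming from $\mathbf{p}_n$, bounds the quartic contribution by $4\beta$, and closes via the spectral-gap threshold built into $\mathcal{R}_3$. The only (minor, beneficial) differences are that your single formula $\mathbf{v}=(\mathbf{p}_n-\alpha_n\mathbf{z})/\sqrt{1-\alpha_n^2}$ unifies the paper's two cases $\alpha_n=0$ and $\alpha_n\neq0$, you use an exact identity for $\mathbf{v}^TA\mathbf{v}-\mathbf{z}^TA\mathbf{z}$ in place of the paper's eigenvalue relaxation $\lambda_n\leq\lambda_k\leq\lambda_n+\rho$ (which obliges you to check $\alpha_n^2\leq\tfrac12$, correctly justified from $\rho\geq\delta$), and you reach the $4\beta$ quartic bound via AM--GM rather than the paper's $\lvert 3x-x^2\rvert\leq2$ with Cauchy--Schwarz.
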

\begin{proof}
By the Cauchy's inequality and using the estimate $|3x-x^{2}| \leq 2$, $x \in [0,1]$, we have
\begin{align*}
\sum_{k\in[n]} 3z_{k}^{2}v_{k}^{2}-z_{k}^{4} \leq 3\lVert\mathbf{z}\rVert_{4}^{2}\lVert\mathbf{v}\rVert_{4}^{2}-\lVert\mathbf{z}\rVert_{4}^{4}\leq3\lVert\mathbf{z}\rVert_{4}^{2}-\lVert\mathbf{z}\rVert_{4}^{4}\leq2.
\end{align*}
Next, we choose a specific direction $\mathbf{v}=\sum_{k\in[n]} \nu_{k}\mathbf{p}_{k}$ that satisfies $H_{f}(\mathbf{z})[\mathbf{v}]\leq- \gamma\beta$, ${\bf z}^T {\bf v} = \sum_{k\in[n]}\alpha_{k}\nu_{k}=0$ and $\|{\bf v}\|^2 = \sum_{k \in [n]} \nu_k^2 = 1$.

\textit{Case 1. $\alpha_{n}=0$. Let us set $\nu_{n}=1$ and $\nu_{k}=0$ for all $ k\in [n-1]$.} Then, we obtain
\begin{align*}
H_{f}(\mathbf{z})[\mathbf{v}]&=\mathbf{v}^{T}A\mathbf{v}-\mathbf{z}^{T}A\mathbf{z}+2\beta{\sum}_{k=1}^n(3z_{k}^{2}v_{k}^{2}-z_{k}^{4}) \\
& \leq {\sum}_{k = 1}^n \lambda_{k}(\nu_{k}^{2}-\alpha_{k}^{2})+4\beta=\lambda_{n}-{\sum}_{k=1}^{n-1}\lambda_{k}\alpha_{k}^{2}+4\beta\\
&\leq\lambda_{n}- (\lambda_n + \delta ){\sum}_{k=1}^{n-1} \alpha_{k}^{2}+4\beta=-\delta+4\beta\leq- \gamma \beta,
\end{align*}
where the last inequality immediately follows from $\beta \leq \frac{\delta}{4+\gamma}$.

\textit{Case 2. $0< \alpha_{n}^{2}\leq\frac{\delta-(4+\gamma)\beta}{\delta+\rho}$.} In this situation, we set
\begin{align*}
\nu_{k} = \eta \alpha_{k},\quad \forall~k \in [n-1],\quad \nu_{n}=- \sqrt{1-\alpha_{n}^{2}}, \quad \eta = \frac{\alpha_{n}}{\sqrt{1-\alpha_{n}^{2}}}.
\end{align*}
With this choice, we have ${\bf z}^T{\bf v} = \eta(1-\alpha_n^2) - \alpha_n \sqrt{1-\alpha_n^2} = 0$ and $\|{\bf v}\|^2 = (\eta^2 + 1)(1-\alpha_n^2) = 1$, i.e., it holds that $\mathbf{v} \in\mathcal{T}_{\mathbf{z}}\mathcal{M}\cap\mathbb{S}^{n-1}$. Similar to the calculations in the proof of Lemma \ref{lem:small-1}, we now get
\begingroup
\allowdisplaybreaks
\begin{align*}
H_{f}(\mathbf{z})[\mathbf{v}]&=\mathbf{v}^{T}A\mathbf{v}-\mathbf{z}^{T}A\mathbf{z}+2\beta{\sum}_{k=1}^{n}(3z_{k}^{2}v_{k}^{2}-z_{k}^{4})\\
&\leq {\sum}_{k=1}^{n}\lambda_{k}(\nu_{k}^{2}-\alpha_{k}^{2})+4\beta\\
&\leq {\sum}_{k=1}^{n-1}((\lambda_{n}+\rho)\nu_{k}^{2}-(\lambda_{n}+\delta)\alpha_{k}^{2})+\lambda_{n}(\nu_{n}^{2}-\alpha_{n}^{2})+4\beta\\
&=\rho \cdot {\sum}_{k=1}^{n-1} \nu_{k}^{2}-\delta \cdot {\sum}_{k=1}^{n-1} \alpha_{k}^{2}+4\beta=\rho(1-\nu_{n}^{2})-\delta(1-\alpha_{n}^{2})+4\beta\\
&=\rho\eta^{2}(1-\alpha_{n}^{2})+\delta \alpha_{n}^{2}-\delta+4\beta=(\delta+\rho)\alpha_{n}^{2}-\delta+4\beta\leq-\gamma\beta.
\end{align*}
\endgroup
Combining the latter two cases, we can conclude the proof of Lemma \ref{lem:small-3}.
\end{proof}

We now verify that $f$ has the strict-saddle property whenever $\beta$ is chosen sufficiently small. 

\begin{theorem}
Suppose that the gap between the two smallest eigenvalues of the matrix $A$ satisfies $\delta:=\lambda_{n-1}-\lambda_{n}>0$ and let $\gamma > 0$ be given. If $\beta \leq [2(\frac73+\gamma) + (\frac23 + \gamma)\frac{\rho}{\delta}]^{-1}\delta =: b_\gamma$, then $f$ has the $(\gamma\beta,\gamma\beta,\gamma\beta)$-strict-saddle property.
\label{theo:small}
\end{theorem}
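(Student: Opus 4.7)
The plan is to combine Lemmas \ref{lem:small-1}, \ref{lem:small-2}, and \ref{lem:small-3}, which already furnish the three alternatives of Definition \ref{def:ssp} (each with the required modulus $\gamma\beta$) on the regions $\mathcal{R}_1$, $\mathcal{R}_2$, and $\mathcal{R}_3$, respectively. The only missing piece is to verify that $\mathcal{R}_1\cup\mathcal{R}_2\cup\mathcal{R}_3 = \mathbb{S}^{n-1}$ whenever $\beta\leq b_\gamma$. Before addressing the coverage, I would first check that $\beta\leq b_\gamma$ implies the auxiliary hypothesis of Lemma \ref{lem:small-3}, namely $\beta\leq\delta/(4+\gamma)$; this is immediate from $b_\gamma\leq\delta/[2(\tfrac{7}{3}+\gamma)]$ together with $\tfrac{14}{3}+2\gamma\geq 4+\gamma$.

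For the coverage I would take an arbitrary $\mathbf{z}=\sum_{k=1}^n \alpha_k \mathbf{p}_k\in\mathbb{S}^{n-1}\setminus(\mathcal{R}_1\cup\mathcal{R}_3)$ and show $\mathbf{z}\in\mathcal{R}_2$. The two exclusions give
\begin{equation*}
\frac{\delta-(4+\gamma)\beta}{\delta+\rho} \;<\; \alpha_n^2 \;<\; \frac{(2+\gamma)\beta+\rho}{\delta+\rho},
\end{equation*}
so in particular $1-\alpha_n^2>[\delta-(2+\gamma)\beta]/(\delta+\rho)$, both bracketing quantities being strictly positive since $(4+\gamma)\beta<\delta$. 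Expanding exactly as in the proof of Lemma \ref{lem:large-2} one obtains the identity $\|A\mathbf{z}\|^2-(\mathbf{z}^T A\mathbf{z})^2=\tfrac{1}{2}\sum_{k,\ell}\alpha_k^2\alpha_\ell^2(\lambda_k-\lambda_\ell)^2$, which, using $\lambda_k-\lambda_n\geq\delta$ for every $k\leq n-1$, yields the lower bound
\begin{equation*}
\|A\mathbf{z}\|^2-(\mathbf{z}^T A\mathbf{z})^2 \;\geq\; \delta^2\alpha_n^2(1-\alpha_n^2).
\end{equation*}
Plugging in the two displayed bounds and then using $\delta-(2+\gamma)\beta\geq\delta-(4+\gamma)\beta>0$ in the second factor, the right-hand side is bounded below by $\delta^2[\delta-(4+\gamma)\beta]^2/(\delta+\rho)^2$, so $\delta\sqrt{\alpha_n^2(1-\alpha_n^2)}\geq\delta[\delta-(4+\gamma)\beta]/(\delta+\rho)$.

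The crux is then the algebraic verification that this lower bound is at least $(\tfrac{2}{3}+\gamma)\beta$. Rearranging $\delta(\delta-(4+\gamma)\beta)\geq(\tfrac{2}{3}+\gamma)\beta(\delta+\rho)$ and dividing through by $\delta$, the condition becomes $\delta\geq\beta[(4+\gamma)+(\tfrac{2}{3}+\gamma)(1+\rho/\delta)]=\beta[2(\tfrac{7}{3}+\gamma)+(\tfrac{2}{3}+\gamma)\rho/\delta]$, i.e., precisely $\beta\leq b_\gamma$. Thus $\|A\mathbf{z}\|^2-(\mathbf{z}^T A\mathbf{z})^2\geq(\tfrac{2}{3}+\gamma)^2\beta^2$, so $\mathbf{z}\in\mathcal{R}_2$, completing the coverage and hence the proof. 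The main obstacle is not any single estimate but the delicate calibration of the constants $(4+\gamma)$, $(2+\gamma)$, $(\tfrac{2}{3}+\gamma)$ appearing in the definitions of $\mathcal{R}_1$--$\mathcal{R}_3$: they are precisely tuned so that the worst-case estimate collapses exactly onto the claimed threshold $b_\gamma$ without slack, and any looser bound on $\alpha_n^2(1-\alpha_n^2)$ would inflate $b_\gamma$.
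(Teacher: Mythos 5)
Your proof is correct and follows the paper's overall strategy: combine Lemmas \ref{lem:small-1}--\ref{lem:small-3} and verify that $\mathcal{R}_1\cup\mathcal{R}_2\cup\mathcal{R}_3 = \mathbb{S}^{n-1}$ by showing that the ``gap'' interval for $\alpha_n^2$ forces membership in $\mathcal{R}_2$ via the identity $\|A\mathbf{z}\|^2-(\mathbf{z}^TA\mathbf{z})^2 \geq \delta^2\alpha_n^2(1-\alpha_n^2)$. You are also more careful than the paper in one respect: you explicitly check that $\beta\leq b_\gamma$ implies the hypothesis $\beta\leq\delta/(4+\gamma)$ of Lemma \ref{lem:small-3}, which the paper leaves implicit.

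The one genuine difference is the final algebra. The paper keeps the sharper product $(\delta-(4+\gamma)\beta)(\delta-(2+\gamma)\beta)$, leading to a quadratic inequality in $\beta$, and computes its positive root $\bar\beta = \delta\bigl[3+\gamma+\sqrt{1+(\tfrac23+\gamma)^2(1+\rho/\delta)^2}\bigr]^{-1}$ before checking $\bar\beta\geq b_\gamma$ via $\sqrt{1+x^2}\leq 1+x$. You instead weaken the product to the square $(\delta-(4+\gamma)\beta)^2$, take a square root, and land on a linear inequality that reproduces $b_\gamma$ directly. Your route is shorter and hits the stated threshold without the detour through the quadratic formula; the paper's route actually establishes coverage for the slightly larger range $\beta\leq\bar\beta>b_\gamma$. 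This means your closing remark — that $b_\gamma$ is achieved ``without slack'' — is not quite accurate: there is slack, and it is precisely the factor $(\delta-(2+\gamma)\beta)/(\delta-(4+\gamma)\beta)>1$ you discarded; but this in no way affects the validity of your proof of the theorem as stated.
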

\begin{proof}
By Lemmata \ref{lem:small-1}-\ref{lem:small-3}, we know that the function $f$ satisfies the strong convexity, large gradient, and negative curvature property on the three different set $\mathcal R_1$, $\mathcal R_2$, and $\mathcal R_3$, respectively. To finish the proof, we need to show that those regions actually cover the whole sphere $\mathbb{S}^{n-1}$. Combining these observations, we can then conclude that $f$ has the $(\gamma\beta,\gamma\beta,\gamma\beta)$-strict-saddle property.

In order to prove $\mathcal{R}_{1}\cup\mathcal{R}_{2}\cup\mathcal{R}_{3}=\mathbb{S}^{n-1}$, we only need to verify that for all $\mathbf{z}=\sum_{k\in[n]} \alpha_{k}\mathbf{p}_{k}\in\mathbb{S}^{n-1}$ with 
\begin{equation} \label{eq:b-alpha} \frac{\delta-(4+\gamma)\beta}{\delta +\rho}\leq \alpha_{n}^{2}\leq\frac{(2+\gamma)\beta+\rho}{\delta+\rho}, \end{equation} 
we have $\|A{\bf z}\|^2 - ({\bf z}^T A {\bf z})^2 \geq (\frac{2}{3} + \gamma)^2\beta^{2}$. 
%
Using the bounds \eqref{eq:b-alpha}, it follows
\begin{align*}
\|A{\bf z}\|^2 - ({\bf z}^T A {\bf z})^2  
& = \frac{1}{2} \sum_{k,j\in[n]} \alpha_{k}^{2} \alpha_{j}^{2}(\lambda_{k}-\lambda_{j})^{2} \\ & \geq \sum_{k\in[n-1]} \alpha_{k}^{2}\alpha_{n}^{2}(\lambda_{k}-\lambda_{n})^{2}
 \geq\sum_{k\in[n-1]}\alpha_{k}^{2}\alpha_{n}^{2}\delta^{2}  \\ & =(1-\alpha_{n}^{2})\alpha_{n}^{2}\delta^{2} \geq\frac{(\delta-(4+\gamma)\beta)(\delta-(2+\gamma)\beta)}{(\delta+\rho)^{2}}\delta^{2}.
\end{align*}
where the first identity was established in the proof of Lemma \ref{lem:large-2}. Rearranging the terms in the latter estimate, we see that our claim is satisfied if 
\[ \ell(\beta) := \left[ ({\textstyle\frac23} + \gamma)^2(\delta+\rho)^2 - (4+\gamma)(2+\gamma) \right] \beta^2 + 2(3+\gamma) \delta^3 \beta - \delta^4 \leq 0 \]
for all $\beta \leq b_\gamma$. Since the unique nonnegative zero of the quadratic polynomial $\ell$ is given by
 \[  \bar \beta = \left[{3+\gamma + \sqrt{ 1+ ({\textstyle\frac23} + \gamma)^2(1+{\textstyle\frac{\rho}{\delta}})^2}} \right]^{-1}  \delta, \] 
 we can finish the proof by noticing $\bar \beta \geq b_\gamma$.

\end{proof}

Finally, as a counterpart of Corollary \ref{cor:large}, we can establish the uniqueness of local minima as a consequence of the strict-saddle property.
\begin{corollary}
Under the conditions of Theorem \ref{theo:small}, the problem \eqref{eqn:obj} has exactly two local minima which are also global minima.
\end{corollary}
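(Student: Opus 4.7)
The plan is to combine the $(\gamma\beta,\gamma\beta,\gamma\beta)$-strict-saddle property from Theorem~\ref{theo:small} with the reflection symmetry $f(-\mathbf z)=f(\mathbf z)$ of the real-valued objective. First, I would rule out local minima outside of $\mathcal R_1$. Any local minimum $\mathbf z$ must satisfy the second-order necessary conditions $\grad f(\mathbf z)=0$ and $H_f(\mathbf z)[\mathbf v]\ge 0$ for all $\mathbf v\in\mathcal T_{\mathbf z}\mathcal M$ by Lemma~\ref{lemma:son-sos}, and these conditions are directly incompatible with membership in $\mathcal R_2$ (Lemma~\ref{lem:small-2}) and in $\mathcal R_3$ (Lemma~\ref{lem:small-3}). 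Since $\mathcal R_1\cup\mathcal R_2\cup\mathcal R_3=\Sn$, every local minimum must lie in $\mathcal R_1$.

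The next step is to understand the geometry of $\mathcal R_1$. Writing $\alpha_n=\mathbf z^T\mathbf p_n$ and setting $c:=\sqrt{((2+\gamma)\beta+\rho)/(\delta+\rho)}>0$, the defining inequality $\alpha_n^2\ge c^2$ partitions $\mathcal R_1$ into two disjoint spherical caps $\mathcal R_1^{\pm}:=\{\mathbf z\in\Sn:\pm\alpha_n\ge c\}$ centred at $\pm\mathbf p_n$, each of geodesic radius $\arccos(c)<\pi/2$. Since this radius is strictly below the convexity radius $\pi/2$ of $\Sn$, both caps are geodesically convex. On each cap the Riemannian Hessian is uniformly positive definite by Lemma~\ref{lem:small-1}, so I would then argue exactly as in Lemma~\ref{lem:convex}: for any two distinct local minima $\mathbf z_1,\mathbf z_2\in\mathcal R_1^+$ the unit-speed great-circle arc $\ell(t)=\mathbf z_1\cos t+\mathbf v\sin t$, $t\in[0,T]$ with $T\in(0,\pi]$, stays inside $\mathcal R_1^+$, so $g(t):=f(\ell(t))$ satisfies $g''(t)=H_f(\ell(t))[\ell'(t)]\ge\gamma\beta>0$ throughout and is therefore strictly convex on $[0,T]$, which contradicts having local minima at both endpoints. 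The same reasoning in $\mathcal R_1^-$ gives at most one local minimum per cap.

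Existence and globality then follow from compactness and symmetry. The continuous function $f$ attains a global minimum $\mathbf z^*$ on the compact sphere $\Sn$, and $\mathbf z^*$ is in particular a local minimum, hence lies in one of the two caps; by the reflection symmetry $f(-\mathbf z)=f(\mathbf z)$ the antipode $-\mathbf z^*$ is a second global minimum in the opposite cap. Combined with the per-cap uniqueness, this yields exactly two local minima, both of them global.

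The main technical obstacle is the geodesic-convexity verification of $\mathcal R_1^{\pm}$. Lemma~\ref{lem:convex} as stated is formulated for Euclidean balls of radius at most one, and when $c<\tfrac{1}{2}$ the cap $\mathcal R_1^+$ is not contained in any such ball, so Lemma~\ref{lem:convex} cannot simply be invoked verbatim. The cleanest workaround is to replace its Euclidean ``geodesic stays inside the ball'' step by a direct check specific to spherical caps, namely that along the arc $\ell$ the height function $\ell(t)^T\mathbf p_n=A\cos t+B\sin t$ with $A=\alpha_n(\mathbf z_1)\ge c>0$ and $T\in(0,\pi]$ attains its minimum on $[0,T]$ at one of the endpoints, and hence never drops below $c$. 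All other ingredients (second-order necessary conditions, reflection symmetry, compactness) are entirely standard.
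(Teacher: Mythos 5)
Your proposal is correct and follows essentially the same route as the paper: reduce to $\mathcal R_1$ using Lemmata \ref{lem:small-2} and \ref{lem:small-3}, split $\mathcal R_1$ into two antipodal caps, and use uniform positive definiteness of the Riemannian Hessian together with geodesic convexity of the caps to obtain at most one local minimizer per cap, then conclude existence and globality by compactness and symmetry. The one place you diverge is the ``technical obstacle'' about Lemma \ref{lem:convex} failing when $c < 1/2$: this case in fact never arises. Since $\rho = \lambda_1 - \lambda_n \geq \lambda_{n-1}-\lambda_n = \delta > 0$ and $\beta,\gamma > 0$, one has
\[
\nu = c^2 = \frac{(2+\gamma)\beta + \rho}{\delta + \rho} \;\geq\; \frac{\rho}{\delta+\rho} \;\geq\; \frac{1}{2},
\]
so the caps have Euclidean radius $\sqrt{2-2\sqrt{\nu}} \leq \sqrt{2-\sqrt{2}} < 1$ and Lemma \ref{lem:convex} applies verbatim. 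Your replacement argument (that the height $\ell(t)^T\mathbf p_n$ along the great-circle arc of length $T\leq\pi$ stays above $c$ when both endpoints do) is valid and is in effect what the lemma's proof already establishes, so the workaround is sound but unnecessary.
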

\begin{proof}
Note that all local minima locate in $\mathcal{R}_{1}$ and that $\mathcal{R}_{1}$ consists of two symmetrical non-intersecting subsets. We now consider one of the subsets $\bar{\mathcal{R}}_{1}:=\{\mathbf{z}\in\Sn:\mathbf{z}=\sum_{k\in[n]}\alpha_{k}\mathbf{p}_{k},\alpha_{n}\geq\sqrt{\nu}\}$ where $ \nu := ({\delta+\rho})^{-1}[(2+\gamma)\beta+\rho]$. Using $\|{\bf z}-\mathbf{p}_{n}\|^{2}=2-2\alpha_{n}$, for ${\bf z} \in \bar {\mathcal R}_1$, an equivalent definition of this subset is given by
\begin{align*}
\bar{\mathcal{R}}_{1}=\left\{\mathbf{z}\in\Sn:\|\mathbf{z}-\mathbf{p}_{n}\|^{2}\leq2-2\sqrt{\nu}\right\}.
\end{align*}
In order to apply Lemma \ref{lem:convex}, we need to verify $2-2\sqrt{\nu}\leq1$ or, equivalently, $\nu \geq \frac{1}{4}$. However, due to $\gamma,\beta>0$ and $\rho\geq\delta>0$, we have $\nu \geq\frac{\rho}{\delta+\rho}\geq\frac{1}{2}>\frac{1}{4}$. Similary, the second subset can be represented by $\bar{\mathcal R}_2 := \{{\bf z} \in \Sn: \|{\bf z} + {\bf p}_n\|^2 \leq 2 - 2\sqrt{\nu}\}$. Then, by Lemma \ref{lem:convex}, there exist exactly two equivalent local minima which are also global minima of problem \eqref{eqn:obj}.
\end{proof}



\section{Estimation of the Kurdyka-\L ojasiewicz Exponent}
\label{sec:KLExp}

In this section, we estimate the Kurdyka-\L ojasiewicz (KL) exponent of problem \eqref{eqn:obj}. 
Specifically, we want to find the largest $\theta\in(0,\frac{1}{2}]$ such that for all stationary points $\mathbf{z}$ of problem (\ref{eqn:obj}), the \L ojasiewicz inequality,
\begin{equation}
\label{eq:kl}
\lvert f(\mathbf{y})-f(\mathbf{z})\rvert^{1-\theta}\leq\eta_{\mathbf{z}}\lVert\grad{f(\mathbf{y})}\rVert,\quad\forall~\mathbf{y}\in B(\mathbf{z},\delta_{\mathbf{z}})\cap\mathbb{CS}^{n-1},
\end{equation}
holds with some constants $\delta_{\mathbf{z}},\eta_{\mathbf{z}}>0$. The largest possible $\theta$ is called the KL exponent of problem \eqref{eqn:obj}.

As already mentioned, the {\L}ojasiewicz inequality \eqref{eq:kl} plays a fundamental role in nonconvex optimization and is frequently utilized to analyze the local convergence properties of nonconvex optimization methods, \cite{attouch2009convergence,AttBolRedSou10,AttBolSva13,bolte2014proximal,OchCheBroPoc14,BonLorPorPraReb17,liu2017quadratic,li2018calculus}. In \cite{attouch2009convergence}, Attouch and Bolte derived an abstract KL-framework based on the \L ojasiewicz inequality that allows to establish global convergence and local convergence rates for general optimization approaches satisfying certain function reduction and asymptotic step size safe-guard conditions. In particular, if $\theta\geq\frac{1}{2}$, then the corresponding iterates can be shown to converge linearly. Otherwise, if $\theta\in(0,\frac{1}{2})$, the iterates converge at a sublinear rate $O(t^{-\frac{\theta}{1-2\theta}})$. By introducing the auxiliary problem
\begin{align} 
\min_{{\bf z} \in \Cn}~\hat f({\bf z}), \quad
\hat{f}(\mathbf{z}) :=
\begin{cases}
f(\mathbf{z}) & \text{if } \mathbf{z}\in\CSn,\\
+\infty       & \text{otherwise},\\
\end{cases}
\label{eq:nonsmooth}
\end{align}
the original problem \eqref{eqn:obj} can be treated as the minimization of an extended real-valued, proper, and lower semicontinuous function which allows to apply existing results and the rich KL theory for nonsmooth problems, see, e.g., \cite{lojasiewicz1963propriete,Kur98,BolDanLew-MS-06,BolDanLew06,BolDanLewShi07}. 

In the nonsmooth setting, the Riemannian gradient, appearing in \eqref{eq:kl}, is typically substituted by the nonsmooth slope of $\hat f$ which is based on the Fr\'{e}chet and limiting subdifferential of $\hat f$.
In our case, if problem \eqref{eq:nonsmooth} is restricted to the real space $\Rn$, the limiting subdifferential and nonsmooth slope of $\hat{f}$ at $\mathbf{z}\in\Sn$ can be expressed as $\partial\hat{f}(\mathbf{z})= \{\nabla f(\mathbf{z})+\mu\mathbf{z}:\mu\in\mathbb{R}\}$ and 
\[ \argmin_{\mathbf{g}\in\partial\hat{f}(\mathbf{z})}\|\mathbf{g}\|= (I - {\bf z}{\bf z}^T)\nabla f({\bf z}) = \grad{f(\mathbf{z})}. \] 
Hence, the Riemannian-type {\L}ojasiewicz inequality \eqref{eq:kl} coincides with the standard notion and KL framework used in nonsmooth  optimization. Our goal is now to show that the KL exponent of \eqref{eqn:obj} is at least $\frac{1}{4}$ under suitable conditions.


Throughout this section, we assume that $\mathbf{z}\in\mathbb{CS}^{n-1}$ is a stationary point of problem \eqref{eqn:obj}. Furthermore, $\mathbf{y}\in\mathbb{CS}^{n-1}$ denotes a neighboring point of $\mathbf{z}$ and we set $\Delta=\mathbf{y}-\mathbf{z}$. 
We now collect and present some preparatory notations and computational results that will be used in the following derivations. Since $\mathbf{z}$ is a stationary point of problem (\ref{eqn:obj}), we have 
\begin{align}
H\mathbf{z} = A\mathbf{z}+2 \beta\diag(\lvert\mathbf{z}\rvert^{2})\mathbf{z}-2\lambda\mathbf{z} = 0, \quad 2\lambda = {\bf z}^*A{\bf z} + 2\beta\|{\bf z}\|_4^4
\label{eqn:6-1}
\end{align}
and as proved in \eqref{eq:objective_value}, it holds that
\begin{equation}
\begin{aligned}
f(\mathbf{y})-f(\mathbf{z})=\frac{1}{2}\mathbf{y}^{*}H\mathbf{y}+\frac{\beta}{2} \|\tau\|^2, \quad \tau_k := \lvert y_{k}\rvert^{2}-\lvert z_{k}\rvert^{2}, \quad k \in [n].
\end{aligned}
\label{eqn:6-2}
\end{equation}
%
The norm of the Riemannian gradient can be expressed as follows
%
\begingroup
\allowdisplaybreaks
\begin{align}
\nonumber \|\grad{f({\bf y})}\|^2 & = \half \| P^\bot_{\bf y}[H + 2\beta \diag(\tau)]{\bf y}\|^2 \\\nonumber & = \half \|[H + 2\beta \diag(\tau)] {\bf y}\|^2 - \half ({\bf y}^*[H + 2\beta \diag(\tau)]{\bf y})^2 \\ 
\nonumber &=\frac{1}{2}\mathbf{y}^{*}H^{2}\mathbf{y}-\frac{1}{2}(\mathbf{y}^{*}H\mathbf{y})^{2}+ 2\beta^{2} \mathbf{y}^{*}\diag(|\tau|^2)\mathbf{y}-2 \beta^{2}(\mathbf{y}^{*}\diag(\tau)\mathbf{y})^{2}\\
&\hspace{4ex}+2\beta\mathbf{y}^{*}H\diag(\tau)\mathbf{y}-2\beta(\mathbf{y}^{*}H\mathbf{y})(\mathbf{y}^{*}\diag(\tau)\mathbf{y}),
\label{eqn:6-4}
\end{align}
\endgroup
%
%
where $P_{\mathbf{y}}^{\perp}=I_{n}-\mathbf{yy}^{*}$ is the orthogonal projection onto the space $[\mathrm{span}\,\{\mathbf{y}\}]^\bot$ 
Finally, let us define the index sets
\begin{align*}
\mathcal{A}=\{k:z_{k}=0\},\quad\mathcal{I}=\{k:z_{k}\neq0\}
\end{align*}
and $r_{+}=\min_{k\in\mathcal{I}}r_{k}>0$. Notice that we have $\tau_{k}=t_{k}^{2}\geq0$ for all $k\in\mathcal{A}$. 

We first show that the \L ojasiewicz inequality holds with $\theta=\frac{1}{4}$ at those stationary points where $H=0$ (we also refer to the remark after this lemma).
\begin{lemma} \label{lemma:kl-slim}
Suppose $\mathbf{z}$ is an arbitrary point on $\mathbb{CS}^{n-1}$. Then, the inequality
\begin{align}
\lVert\tau\rVert^{\frac{3}{2}}\leq\eta_{\mathbf{z}}\lVert P_{\mathbf{y}}^{\perp}\diag(\tau)\mathbf{y}\rVert,\quad\forall~\mathbf{y}\in B(\mathbf{z},\delta_{\mathbf{z}})\cap\mathbb{CS}^{n-1}
\label{eqn:6-5}
\end{align}
holds for some constants $\eta_{\mathbf{z}},\delta_{\mathbf{z}}>0$. 
\label{lem:6-2}
\end{lemma}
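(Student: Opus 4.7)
My plan is to recast the right-hand side of \eqref{eqn:6-5} as a weighted variance and then bootstrap elementary bounds to the required exponent $\tfrac{3}{2}$. Writing $u_k := |y_k|^2$ and $\bar\tau := \sum_k u_k \tau_k$, a direct computation gives $(P_{\mathbf{y}}^{\perp}\diag(\tau)\mathbf{y})_k = (\tau_k - \bar\tau)\,y_k$ and hence
\[
V := \|P_{\mathbf{y}}^{\perp}\diag(\tau)\mathbf{y}\|^2 \;=\; \sum_{k} u_k (\tau_k - \bar\tau)^2 \;=\; \tfrac{1}{2}\sum_{k,\ell} u_k u_\ell (\tau_k-\tau_\ell)^2.
\]
Consequently, it suffices to prove $V \geq c_{\mathbf{z}} \|\tau\|^3$ on $B(\mathbf{z},\delta_{\mathbf{z}}) \cap \CSn$ for some $c_{\mathbf{z}} > 0$.

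I will split the indices along $\cI := \{k: z_k \neq 0\}$ and $\cA := \{k: z_k = 0\}$ and introduce $T_I := \sum_{k \in \cI} \tau_k^2$, $T_A := \sum_{k \in \cA} u_k^2$, and $S := \sum_{k \in \cA} u_k$, so that $\|\tau\|^2 = T_I + T_A$. Three elementary facts will be used throughout: (i) $\tau_\ell = u_\ell$ for $\ell \in \cA$ and $\sum_{k \in \cI}\tau_k = -S$, by the spherical identity $\sum_k \tau_k = 0$; (ii) choosing $\delta_{\mathbf{z}}$ sufficiently small ensures $u_k \geq r_+^2/2$ for every $k \in \cI$; (iii) $T_A \leq S^2$ and hence $S \geq \sqrt{T_A}$ unconditionally. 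From the variance representation I will then derive two complementary lower bounds on $V$. Restricting the single-sum representation to $k \in \cI$ and minimizing the inner expression over $\bar\tau$ yields $V \geq \tfrac{r_+^2}{2}(T_I - S^2/|\cI|)$. Retaining only the cross indices $k \in \cI$, $\ell \in \cA$ in the double-sum representation, substituting $u_\ell = \tau_\ell$, and expanding produces $V \geq \tfrac{r_+^2}{2} S (T_I + 2 T_A) \geq r_+^2 T_A^{3/2}$, where the last step uses $S \geq \sqrt{T_A}$.

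The $T_I$ contribution is then handled by a case split. In the regime $T_I \geq 2 S^2/|\cI|$, the first bound yields $V \geq (r_+^2/4) T_I$, which dominates a multiple of $T_I^{3/2}$ as soon as $T_I$ is small, a condition secured via $T_I \leq \|\tau\|^2 \leq 4\delta_{\mathbf{z}}^2$ by shrinking $\delta_{\mathbf{z}}$. In the complementary regime $S > \sqrt{|\cI| T_I/2}$, the cross-term bound already yields $V \geq (r_+^2/2)\sqrt{|\cI|/2}\, T_I^{3/2}$ directly. Combining the resulting estimates $V \gtrsim T_I^{3/2}$ and $V \gtrsim T_A^{3/2}$ and invoking the elementary inequality $(T_I + T_A)^{3/2} \leq 2^{3/2}(T_I^{3/2} + T_A^{3/2})$ gives $V \geq c_{\mathbf{z}} \|\tau\|^3$, which, after taking square roots, is exactly \eqref{eqn:6-5}. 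The main obstacle is the case $\cA \neq \emptyset$: when $\mathbf{z}$ has vanishing components, the variance weights $u_\ell = \tau_\ell$ on $\cA$ themselves shrink to zero as $\mathbf{y} \to \mathbf{z}$, ruling out any estimate of the stronger form $V \gtrsim \|\tau\|^2$; the exponent $\tfrac{3}{2}$ arises precisely from the $S$-factor in the cross-term bound, which costs one half-power of $\|\tau\|$.
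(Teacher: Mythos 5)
Your proof is correct and rests on the same structural pillars as the paper's: the weighted variance identity $V := \lVert P_{\mathbf y}^\perp\diag(\tau)\mathbf y\rVert^2 = \sum_k u_k(\tau_k-\bar\tau)^2 = \tfrac12\sum_{k,\ell}u_k u_\ell(\tau_k-\tau_\ell)^2$, the split into $\mathcal I$ and $\mathcal A$, the observation $\tau_\ell = u_\ell \geq 0$ on $\mathcal A$, and the lower bound $u_k \geq r_+^2/2$ on $\mathcal I$ for $\mathbf y$ close to $\mathbf z$. Where you genuinely diverge is in the aggregation. The paper lower-bounds $V$ by a multiple of $\sum_{k,j\in\mathcal I}\lvert\tau_k-\tau_j\rvert^3 + \lVert\tau_{\mathcal A}\rVert_3^3$ using $(\tau_k-\tau_j)^2 \geq \lvert\tau_k-\tau_j\rvert^3$ for small $\tau$, upper-bounds $\lVert\tau\rVert^2$ by $\tfrac{n}{m}[\sum_{k,j\in\mathcal I}\lvert\tau_k-\tau_j\rvert^2 + \lVert\tau_{\mathcal A}\rVert^2]$, and closes with the norm comparison $\lVert\mathbf w\rVert^3 \leq \sqrt{p}\lVert\mathbf w\rVert_3^3$. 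You instead compress everything into the three scalars $T_I$, $T_A$, $S$, extract the two complementary bounds $V \geq \tfrac{r_+^2}{2}(T_I - S^2/\lvert\mathcal I\rvert)$ and $V \geq \tfrac{r_+^2}{2}S(T_I+2T_A) \geq r_+^2 T_A^{3/2}$ (via $S\geq\sqrt{T_A}$), and close with a case split on $T_I$ versus $S^2/\lvert\mathcal I\rvert$. It is worth noting that both you and the paper expand the $\mathcal I\times\mathcal A$ cross-block to $\tfrac{r_+^2}{2}[m\lVert\tau_{\mathcal A}\rVert_3^3 + ST_I + 2ST_A]$; the paper keeps the cubed-norm term and discards the $S$-terms, while you do the reverse and then invoke $S\geq\sqrt{T_A}$. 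Your version avoids the $\ell^3$-norm machinery entirely and makes the origin of the exponent $\tfrac32$ transparent (the factor $S$ attached to the $\mathcal A$-contribution costs a half-power of $\lVert\tau\rVert$). Both routes yield constants depending on $r_+$, $\lvert\mathcal I\rvert$, and $n$, so neither is sharper; the difference is organizational, and your case-split argument is arguably the more elementary of the two.
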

\begin{proof}
In the case $\tau=0$, we have $\lVert\tau\rVert = \lVert P_{\mathbf{y}}^{\perp}\diag(\tau)\mathbf{y}\rVert = 0$ and consequently, the inequality \eqref{eqn:6-5} holds trivially with $\theta=\frac{1}{2}$. Next, let us assume $\mathcal{A}\neq\emptyset$ and let us introduce the polar coordinates $z_k =  r_k e^{i \theta_k}$, $y_k = t_k e^{i \phi_k}$ for $r_k, t_k \geq 0$, $\theta_k, \phi_k \in [0,2\pi]$, and all $k \in [n]$. A straightforward calculation yields $|\Delta_k|^2 = t_k^2 - 2 r_k t_k \cos(\theta_k - \phi_k) + r_k^2 \geq (t_k -r_k)^2$ and hence, it follows
\[  |\tau_k| = (t_k + r_k) |t_k - r_k| \leq (2r_k + |\Delta_k|)|\Delta_k|, \quad \forall~k \in \cI, \] 
%
and $\tau_k = t_k^2 = |\Delta_k|^2$ for all $k \in \cA$ . 
%
%
%
Using $\sum_{k \in \mathcal I} \tau_k = - \sum_{k \in \mathcal A} \tau_k = - \|\tau_\cA\|_1$, the estimates $\|\tau_\cA\|_1^2 \leq |\cA|\|\tau_\cA\|^2$ and
\begin{align} \nonumber \frac12 \sum_{k,j \in \mathcal I}(\tau_k- \tau_j)^2 & = |\mathcal I| \sum_{k \in \mathcal I} \tau_k^2 - \left[ {\sum}_{k \in \mathcal I} \tau_k \right]^2 \\ \label{eq:diff-norm}& = |\mathcal I| \|\tau_\cI\|^2 - \left[ {\sum}_{k \in \mathcal A} \tau_k \right]^2 =  |\mathcal I| \|\tau_\cI\|^2 - \|\tau_\cA\|_1^2, \end{align}
and setting $m := |\mathcal I | \geq 1$, we obtain 
\[  \|\tau\|^2  \leq \frac{1}{2m} \sum_{k,j \in \mathcal I}(\tau_k - \tau_j)^2 + \frac{n}{m} \|\tau_\cA\|^2 \leq \frac{n}{m} \left [ {\sum}_{k,j \in \cI} |\tau_k - \tau_j|^2 + \|\tau_\cA\|^2 \right]. \] 
Furthermore, it holds that 
\begingroup
\allowdisplaybreaks
\begin{align*}  2 \| P_{\bf y}^\bot\diag(\tau){\bf y}\|^2  & = 2 {\bf y}^* \diag(|\tau|^2){\bf y} - 2 ({\bf y}^*\diag(\tau){\bf y})^2  \\ &=  2 \left[ \sum t_k^2 \tau_k^2 - \left( \sum t_k^2 \tau_i \right)^2 \right]   = \sum_{k,j = 1}^n t_k^2 t_j^2 (\tau_k - \tau_j)^2 \\ & =  \sum_{k,j \in \mathcal A} \tau_k \tau_j (\tau_k - \tau_j)^2 +  \sum_{k,j \in \mathcal I} t_k^2 t_j^2 (\tau_k - \tau_j)^2 \\ & \hspace{4ex} + 2 \sum_{k \in \mathcal A} \sum_{j \in \mathcal I} \tau_k t_j^2 (\tau_k - \tau_j)^2. \end{align*}
\endgroup
Next, defining 
$ \delta_{\bf z} := \min \left\{1,\min\{r_+^2,1\}/6 \right\}$, it follows $\|\tau_\cI\|_\infty \leq \min\{r_+^2,1\} / 2$ for all ${\bf y} \in B_{\delta_{\bf z}}({\bf z})$. 
Moreover, the latter condition implies $|y_k|^2 = t_k^2 > {r^2_+}/{2}$ for all $k \in \mathcal I$ and thus, it holds that
\begin{align*} 2 \sum_{k \in \cA} \sum_{j \in \cI} \tau_k t_j^2 (\tau_k - \tau_j)^2 & \geq r_+^2 \sum_{k \in \cA} \sum_{j \in \cI} [\tau_k^3 - 2 \tau_k^2 \tau_j + \tau_k \tau_j^2] \\ & = r_+^2  \sum_{k \in \cA} \left [ m \tau_k^3 - 2 \tau_k^2 \left( {\sum}_{j \in \cI} \tau_j \right) + \tau_k \|\tau_\cI\|^2 \right] \\ & = r_+^2 m \|\tau_\cA\|_3^3 + r_+^2 \|\tau_\cA\|_1 [ \|\tau\|^2 + \|\tau_\cA\|^2] \geq r_+^2 m \|\tau_\cA\|_3^3. \end{align*}
Similarly, we can show $\sum_{k,j \in \cA} \tau_k\tau_j(\tau_k-\tau_j)^2 = 2 \|\tau_\cA\|_1 \|\tau_\cA\|_3^3 - 2 \|\tau_\cA\|^4 \geq 0$. Due to $\|\tau_\cI\|_\infty \leq 1/2$, we have $|\tau_k - \tau_j| \leq 1$ for all $k,j \in \cI$ and consequently, it follows $\sum_{k,j \in \cI} (\tau_k - \tau_j)^2 \geq  \sum_{k,j \in \cI} |\tau_k - \tau_j|^3$. Together and using the estimate $\|{\bf w}\|^3 \leq \sqrt{p} \|{\bf w}\|_3^3$, ${\bf w} \in \C^p$, we finally obtain
\begin{align*} \| P_{\bf y}^\bot\diag(\tau){\bf y}\|^2  & \geq \frac{r_+^2}{2} \min \left\{\frac{r_+^2}{4},m\right\} \left[ {\sum}_{k,j \in \mathcal I} |\tau_k - \tau_j|^3 + \|\tau_\cA\|_3^3 \right] \\ & \geq \underbracket{\begin{minipage}[t][5.5ex][t]{20ex}\centering$\displaystyle\frac{r^2_+\min \left\{{r_+^2}/{4},m\right\}}{2\sqrt{m^2 + n - m}} $ \end{minipage}}_{=: c} \left[ {\sum}_{k,j \in \mathcal I} |\tau_k - \tau_j|^2 + \|\tau_\cA\|^2 \right]^{\frac32} . \end{align*} 
Thus, the {\L}ojasiewicz-type inequality \eqref{eqn:6-5} is satisfied with $\eta_{\bf z} := (\frac{n}{m})^{1.5}c^{-1}$. 
\end{proof}

\begin{remark}
If $\mathbf{z}\in\mathbb{CS}^{n-1}$ is a stationary point of problem \eqref{eqn:obj} with $H=0$, then we have $f({\bf y}) - f({\bf z}) = (\beta/2) \|\tau\|^2$ and $\|\grad{f({\bf z})}\| = \sqrt{2}\beta \|P_{{\bf y}}^\bot \diag(\tau){\bf y}\|$ and thus, Lemma \ref{lemma:kl-slim} implies that the \L ojasiewicz inequality \eqref{eq:kl} holds with $\theta=\frac{1}{4}$. Moreover, this result can be used to show that the KL exponent can not be larger than $\frac14$ for general stationary points. 
%
%
\end{remark}
\begin{remark} The proof of Lemma \ref{lemma:kl-slim} implies that the exponent $\frac32$ can be improved to $1$ if the index set $\mathcal A$ is empty. More generally, it can be shown that the \L ojasiewicz-type inequality \eqref{eqn:6-5} holds with exponent $1$ along directions ${\bf y} \in B({\bf z},\delta_{\bf z}) \cap \CSn$ with $y_\cA = 0$.
\end{remark}

Next, we prove that the KL exponent is at least $\frac{1}{4}$ when the matrix $A$ is diagonal.
\begin{theorem}
Let $A = \diag({\bf a}) \in \C^{n \times n}$, ${\bf a} \in \Rn$, be a diagonal matrix. Then, the KL exponent of problem \eqref{eqn:obj} is $\frac{1}{4}$.
\end{theorem}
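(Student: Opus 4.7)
The strategy will be to decompose $|f(\mathbf{y}) - f(\mathbf{z})|$ into a linear-in-$\tau_{\mathcal{A}}$ contribution and a quadratic-in-$\tau$ contribution, and to bound each from below by $\|\grad f(\mathbf{y})\|$ using Lemma~\ref{lem:6-2} together with the explicit identity $\|\grad f(\mathbf{y})\|^2 = \tfrac{1}{2}\|H_y\mathbf{y}\|^2$, where $H_y := A + 2\beta\diag(|\mathbf{y}|^2) - 2\lambda_y I$ and $2\lambda_y = \mathbf{y}^*A\mathbf{y} + 2\beta\|\mathbf{y}\|_4^4$. As a preliminary, since $A = \diag(\mathbf{a})$, Lemma~\ref{tho:diag_1} and \eqref{eqn:6-1} force $H_{kk} = 0$ for every $k \in \mathcal{I}$, so \eqref{eqn:6-2} collapses to
\[ f(\mathbf{y}) - f(\mathbf{z}) = \tfrac{1}{2}\,{\sum}_{k \in \mathcal{A}_+} H_{kk}|y_k|^2 + \tfrac{\beta}{2}\|\tau\|^2, \]
where $\mathcal{A}_+ := \{k \in \mathcal{A}: H_{kk} \neq 0\}$ and $c_0 := \min_{k \in \mathcal{A}_+} H_{kk}^2$ is strictly positive; note that $\tau_k = |y_k|^2$ for $k \in \mathcal{A}$, so the first sum equals $\tfrac{1}{2}\sum_{k \in \mathcal{A}_+} H_{kk}\tau_k$.

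Two complementary lower bounds on $\|\grad f(\mathbf{y})\|$ will then be produced. The first is coordinate-wise: for $k \in \mathcal{A}_+$, $(H_y\mathbf{y})_k = \bigl(H_{kk} + 2\beta|y_k|^2 - (2\lambda_y - 2\lambda)\bigr)y_k$, and by continuity, as $\mathbf{y} \to \mathbf{z}$, both $|y_k|^2$ (for $k \in \mathcal{A}$) and $2\lambda_y - 2\lambda$ tend to zero, so in a sufficiently small ball $B(\mathbf{z},\delta_{\mathbf{z}}) \cap \CSn$ the perturbation $2\beta|y_k|^2 - (2\lambda_y - 2\lambda)$ has modulus at most $|H_{kk}|/2$; summing $|(H_y\mathbf{y})_k|^2 \geq H_{kk}^2|y_k|^2/4$ over $k \in \mathcal{A}_+$ gives $\|\grad f(\mathbf{y})\|^2 \geq (c_0/8)\|\tau_{\mathcal{A}_+}\|_1$. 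The second bound comes from the identity $P_{\mathbf{y}}^\perp[H + 2\beta\diag(\tau)]\mathbf{y} = H_y\mathbf{y}$, so the triangle inequality yields $2\beta\|P_{\mathbf{y}}^\perp\diag(\tau)\mathbf{y}\| \leq \sqrt{2}\|\grad f(\mathbf{y})\| + \|H\mathbf{y}\|$; since $H\mathbf{y}$ is supported on $\mathcal{A}_+$ with $\|H\mathbf{y}\|^2 \leq \max_k H_{kk}^2 \cdot \|\tau_{\mathcal{A}_+}\|_1$, the first bound absorbs $\|H\mathbf{y}\|$ into a constant multiple of $\|\grad f(\mathbf{y})\|$, leaving $\|P_{\mathbf{y}}^\perp\diag(\tau)\mathbf{y}\| \leq C\|\grad f(\mathbf{y})\|$, and Lemma~\ref{lem:6-2} then delivers $\|\tau\|^{3/2} \leq C'\|\grad f(\mathbf{y})\|$.

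Combining the decomposition with the two bounds, and shrinking $\delta_{\mathbf{z}}$ once more so that $\|\grad f(\mathbf{y})\| \leq 1$ on the entire ball, I will obtain
\[ |f(\mathbf{y}) - f(\mathbf{z})| \leq C_1\|\grad f(\mathbf{y})\|^2 + C_2\|\grad f(\mathbf{y})\|^{4/3} \leq \eta_{\mathbf{z}}\|\grad f(\mathbf{y})\|^{4/3}, \]
i.e.\ the \L ojasiewicz inequality with exponent $\theta = 1/4$. Tightness of this exponent (i.e.\ that no larger $\theta$ works) will follow from the remark after Lemma~\ref{lem:6-2}: whenever $\mathcal{A} = \emptyset$, for instance as soon as $\mathbf{u} := \mathcal{P}_{\Delta_n}(-\mathbf{a}/2\beta)$ has all positive entries (by Theorem~\ref{theorem:char-glob-loc}), the matrix $H$ vanishes at the associated stationary point, and the construction in that remark shows that no $\theta > 1/4$ can be valid there. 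The hard part will be the first lower bound: the perturbation $2\beta|y_k|^2 - (2\lambda_y - 2\lambda)$ has to be uniformly controlled by the (possibly small) eigenvalues $H_{kk}$ across $k \in \mathcal{A}_+$, and this dominance — itself dependent on the choice of neighborhood — must remain consistent with the further shrinking needed to enforce $\|\grad f(\mathbf{y})\| \leq 1$.
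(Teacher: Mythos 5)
Your argument for the lower bound $\theta\geq\tfrac14$ is correct, and it takes a genuinely different and in fact slicker route than the paper. The paper works directly from the expansion \eqref{eqn:6-4} of $\|\grad f({\bf y})\|^2$, computes the cross-terms ${\bf y}^{*}\diag(\tau)H{\bf y}$ and ${\bf y}^{*}H{\bf y}\cdot{\bf y}^{*}\diag(\tau){\bf y}$ using the diagonal structure, applies Young's inequality with $p=4$ to split the latter, and then re-derives the inequalities from the proof of Lemma~\ref{lemma:kl-slim} term by term to match the upper bound on $|f({\bf y})-f({\bf z})|$ against a common set of quantities. Your proof bypasses all of this by observing the identity $\grad f({\bf y})=\tfrac1{\sqrt2}H_y{\bf y}$ with $H_y := A+2\beta\diag(|{\bf y}|^2)-2\lambda_y I$ (which is indeed valid, since ${\bf y}^{*}H_y{\bf y}=0$ by definition of $\lambda_y$, so $P_{\bf y}^{\perp}H_y{\bf y}=H_y{\bf y}$), and then extracting the two needed lower bounds on $\|\grad f({\bf y})\|$ by a coordinate-wise argument on the support set $\mathcal A_+$ (the paper's $\mathcal B$) plus a triangle inequality to isolate $\|P_{\bf y}^{\perp}\diag(\tau){\bf y}\|$ and invoke Lemma~\ref{lemma:kl-slim} as a black box. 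This avoids the Young's-inequality balancing entirely. The "hard part" you flag at the end is not actually hard: $\mathcal A_+$ and $c_0=\min_{k\in\mathcal A_+}H_{kk}^2>0$ are fixed once ${\bf z}$ is fixed, so shrinking $\delta_{\bf z}$ to enforce $|2\beta|y_k|^2-(2\lambda_y-2\lambda)|\leq\tfrac12\sqrt{c_0}$, the hypotheses of Lemma~\ref{lemma:kl-slim}, and $\|\grad f({\bf y})\|\leq1$ simultaneously poses no conflict.

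One genuine error is in your tightness remark. You propose exhibiting sharpness at a stationary point with $\mathcal A=\emptyset$ (all components of $\mathcal P_{\Delta_n}(-{\bf a}/2\beta)$ positive), on the grounds that then $H=0$. But when $\mathcal A=\emptyset$, the second remark after Lemma~\ref{lemma:kl-slim} states that the exponent $\tfrac32$ in \eqref{eqn:6-5} improves to $1$, which yields $|f({\bf y})-f({\bf z})|=\tfrac{\beta}{2}\|\tau\|^2\leq C\|\grad f({\bf y})\|^2$, i.e.\ KL exponent $\tfrac12$ at that point. Sharpness requires $H=0$ together with $\mathcal A\neq\emptyset$, as in the paper's first remark; the example ${\bf z}=e_1$ with $a_k=a_1+2\beta$ for $k\geq 2$ works, but this holds only for non-generic diagonal $A$. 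The paper's own treatment of tightness is likewise just a citation to that remark, so this is a shared looseness, but your specific choice of witness is one for which the exponent is actually $\tfrac12$, not $\tfrac14$.
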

\begin{proof}

In the case $\tau=0$, we have $f({\bf y}) - f({\bf z}) = \half {\bf y}^* H {\bf y}$ and $2 \|\grad f({\bf y})\|^2 = {\bf y}^*H^2 {\bf y} - ({\bf y}^*H{\bf y})^2$. We now decompose ${\bf y}$ as follows 
\[ {\bf y} = {\bf u} + {\bf v}, \quad H{\bf u} = 0, \quad \|H{\bf v}\| \geq \sigma_{-}(H) \|{\bf v}\|, \quad \text{and} \quad {\bf u}^*{\bf v} = 0, \]
where $\sigma_{-}(H)$ denotes smallest positive singular value of $H$. Furthermore, let $\sigma_{+}(H) \geq 0$ be the maximum singular value of  $H$. It holds that
\[ f({\bf y}) - f({\bf z}) \leq \frac{\sigma_{+}(H) }{2} \|{\bf v}\|^2, \quad \|\grad f({\bf y})\|^2 \geq \frac{\sigma_-(H)^2}{2} \|{\bf v}\|^2 - \frac{\sigma_{+}(H)^2}{2} \|{\bf v}\|^4. \]
Thus, due to $\|{\bf v}\|^2 \leq \|\Delta\|^2$, the inequality \eqref{eq:kl} is satisfied with exponent $\theta = \frac12$.

Next, we consider the general case $\tau \neq 0$. In this situation, we have $A_{[\cA\cI]} = 0$, $H_{[\cI\cA]} = A_{[\cI\cA]} = 0$ and the KKT conditions imply 
\[ a_k + 2\beta |z_k|^2 - 2\lambda = 0 \quad \forall~k \in \cI \quad \implies \quad H_{[\cI\cI]} = 0 \]
 and hence, due to $\tau_k = |\Delta_k|^2 = |y_k|^2 = t_k^2$ for all $k \in \cA$, it follows 
 \begin{align*} {\bf y}^*\diag(\tau)H{\bf y} & = y_\cA^* \diag(\tau_\cA) H_{[\cA\cdot]}\Delta + y_\cI^* \diag(\tau_\cI)H_{[\cI\cdot]}\Delta \\ & = \Delta_\cA^* \diag(\tau_\cA) A_{[\cA\cA]} \Delta_\cA - 2\lambda \|\tau_\cA\|^2 = \sum_{k \in \cA} (a_k - 2\lambda) \tau_k^2.  \end{align*}
Using Young's inequality, $t_k^2 \leq 1$, and  $\tau_k = t_k^2$ for all $k \in \cA$, it follows
\[ {\bf y}^*H{\bf y} \cdot {\bf y}^*\diag(\tau){\bf y} = {\bf y}^*H{\bf y} \left [\|\tau_\cA\|^2 + \sum_{k \in \cI} t_k^2 \tau_k\right] \leq \frac{2|{\bf y}^* H {\bf y}|^q }{q} +  \frac{\|\tau_\cA\|^{2p} + \|\tau_\cI\|_1^p}{p}  \]
for $p > 1$ and $q = 1 + \frac{1}{p-1}$. 
%
%
%
%
%
%
%
 %
 %
 %
Let us now introduce the index set $\mathcal B := \{k \in \mathcal A: a_k - 2\lambda \neq 0\}$ and let us define $h_- := \min_{k \in \mathcal B} |a_k - 2\lambda|$ and $h_+ := \max_{k \in \mathcal B} |a_k - 2\lambda|$. Then, due to $\|\tau_\cB\|^2 = \|y_\cB\|^4_4$ and \eqref{eqn:6-4}, and applying the estimates derived in the proof of Lemma \ref{lemma:kl-slim}, we obtain  
\begin{align*} \|\grad f({\bf y})\|^2 & \\ & \hspace{-8ex} \geq \frac{h_-^2}{2} \|y_\cB\|^2 - \frac{h_+^2}{2} \|y_\cB\|^4 - \frac{4\beta}{q} h_+^q \|y_\cB\|^{2q} - \frac{2\beta}{p} [\|\tau_\cA\|^{2p} + \|\tau_\cI\|_1^p] \\ & \hspace{-5ex} - 2\beta h_+ \|\tau_\cB\|^2 + 2\beta^2 {\bf y}^* \diag(|\tau|^2){\bf  y} - 2\beta^2 ({\bf y}^*\diag(\tau){\bf y})^2 \\ & \hspace{-8ex} \geq \frac{h_-^2}{2} \|y_\cB\|^2 - o(\|y_\cB\|^2) + \frac{\beta^2r_+^2}{4} {\sum}_{k,j \in \cI} |\tau_k - \tau_j|^2 + \beta^2 r_+ m \|\tau_\cA\|_3^3 \\ & \hspace{-5ex} - 2\beta p^{-1} [\|\tau_\cA\|^{2p} + \|\tau_\cI\|_1^p]  \end{align*}
for $\|y_\cB\| \to 0$, $m := |\cI|$, and ${\bf y}$ sufficiently close to ${\bf z}$.  
%
%
By \eqref{eq:diff-norm}, we have
\begin{align*} \|\tau_\cI\|_1^4 \leq m^2 \|\tau_\cI\|^4 & \leq \left[ \half {\sum}_{k,j \in \cI} |\tau_k - \tau_j|^2 + |\cA| \|\tau_\cA\|^2 \right]^2 \\ & \leq \left[{\sum}_{k,j \in \cI} |\tau_k - \tau_j|^2\right ]^2 + 2(n-m)^2 \|\tau_\cA\|^4 \end{align*}
and consequently, setting $p = 4$, we can choose $\|\Delta\|$ (and thus $\|\tau\|$ and $\|y_\cB\|$) sufficiently small, such that
\begin{align*}\|\grad f({\bf y})\|^2 & \geq \eta_1 \left [ \|y_\cB\|^2 + {\sum}_{k,j \in \cI} |\tau_k - \tau_j|^2 + \|\tau_\cA\|_3^3 \right] \\ & \geq \eta_2 \left [ \|y_\cB\|^2 + {\sum}_{k,j \in \cI} |\tau_k - \tau_j|^2 + \|\tau_\cA\|^2 \right]^{\frac32} \end{align*}
and $f({\bf y}) - f({\bf z}) \leq \eta_3 [ \|y_\cB\|^2 + {\sum}_{k,j \in \cI} |\tau_k - \tau_j|^2 + \|\tau_\cA\|^2 ]$ for suitable $\eta_1, \eta_2, \eta_3 > 0$. This shows that the {\L}ojasiewicz inequality is satisfied with $\theta = \frac14$. 
\end{proof}

Finally, we derive the KL exponent in the real case for global minimizers characterized by the positive semidefiniteness condition in Theorem \ref{theorem:glob-suff}.

\begin{theorem}
Suppose $A \in \R^{n\times n}$ is a symmetric matrix and $\mathbf{z}$ is a stationary point of problem \eqref{eqn:obj} satisfying
\begin{align*}
H \succeq0,
\end{align*}
where $H$ is defined in \eqref{eq:glob-suff}. Then, the KL exponent of problem \eqref{eqn:obj} at $\mathbf{z}$ is at least $\frac{1}{4}$.
\end{theorem}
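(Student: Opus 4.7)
The plan is to establish the \L ojasiewicz inequality $(f(\mathbf{y}) - f(\mathbf{z}))^{3/4} \leq \eta_{\mathbf{z}} \|\grad f(\mathbf{y})\|$ locally by proving a pointwise lower bound of the form
\[ \|\grad f(\mathbf{y})\|^{2} \gtrsim \mathbf{y}^{T} H \mathbf{y} + \|\tau\|^{3} \quad \text{for all } \mathbf{y} \in B(\mathbf{z},\delta_\mathbf{z}) \cap \mathbb{S}^{n-1}. \]
The KL conclusion then follows from the identity \eqref{eqn:6-2} combined with the elementary bound $(a+b)^{3/2} \leq \sqrt{2}(a^{3/2}+b^{3/2}) \leq \sqrt{2}(a+b^{3/2})$ (valid for $a \in [0,1]$ since then $a^{3/2} \leq a$), applied with $a = \tfrac{1}{2}\mathbf{y}^{T}H\mathbf{y}$ and $b = \tfrac{\beta}{2}\|\tau\|^{2}$. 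The positive semidefiniteness of $H$ plays a threefold role: via Theorem \ref{theorem:glob-suff} it makes $\mathbf{z}$ a global minimizer (so $f(\mathbf{y}) - f(\mathbf{z}) \geq 0$ and the identity above applies), it allows the factorization $H = LL^{T}$ with $L \in \R^{n \times \rank(H)}$ having smallest positive singular value $\sigma_{-} > 0$, and it enables a refined Cauchy-Schwarz estimate that tames the cross terms appearing in the gradient expansion \eqref{eqn:6-4}.

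From $H = LL^T$ and $H\mathbf{z} = 0$, I would obtain $\|H\mathbf{y}\|^{2} = \|LL^{T}\Delta\|^{2} \geq \sigma_{-}^{2}\cdot\mathbf{y}^{T}H\mathbf{y}$, so after absorbing the higher-order $(\mathbf{y}^{T}H\mathbf{y})^{2} \leq \|H\|_{2}\delta_{\mathbf{z}}^{2}\cdot\mathbf{y}^{T}H\mathbf{y}$ correction, the quadratic part of \eqref{eqn:6-4} contributes at least $\tfrac{\sigma_{-}^{2}}{4}\mathbf{y}^{T}H\mathbf{y}$. Lemma \ref{lem:6-2} yields the quartic bound $\|P_{\mathbf{y}}^{\perp}\diag(\tau)\mathbf{y}\|^{2} \geq c_{\mathbf{z}}\|\tau\|^{3}$. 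The interaction $|(\mathbf{y}^{T}H\mathbf{y})(\mathbf{y}^{T}\diag(\tau)\mathbf{y})| \leq \|\tau\|\cdot\mathbf{y}^{T}H\mathbf{y}$ is absorbed as a higher-order correction for $\delta_\mathbf{z}$ small. The decisive cross term $\mathbf{y}^{T}H\diag(\tau)\mathbf{y}$ is controlled via the PSD-refined Cauchy-Schwarz inequality
\[ |\mathbf{y}^{T}H\diag(\tau)\mathbf{y}| = |(L^{T}\mathbf{y})^{T}(L^{T}\diag(\tau)\mathbf{y})| \leq \|H\|_{2}^{1/2}\sqrt{\mathbf{y}^{T}H\mathbf{y}}\cdot\|\tau\|, \]
which is strictly sharper than the generic $\|H\mathbf{y}\|\cdot\|\tau\|$ estimate and is the essential place the PSD assumption is used.

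To absorb this cross term against the positive contributions, I would split the analysis into two regimes based on a small threshold $c_{1} > 0$. In the regime $\|\tau\|^{2} \leq c_{1}\mathbf{y}^{T}H\mathbf{y}$, Young's inequality with small weight converts $\sqrt{\mathbf{y}^{T}H\mathbf{y}}\cdot\|\tau\|$ into a fraction of $\mathbf{y}^{T}H\mathbf{y}$, yielding $\|\grad f(\mathbf{y})\|^{2} \gtrsim \mathbf{y}^{T}H\mathbf{y}$, which suffices since $\|\tau\|^{3} \leq c_{1}^{3/2}(\mathbf{y}^{T}H\mathbf{y})^{3/2} \leq c_{1}^{3/2}\mathbf{y}^{T}H\mathbf{y}$ locally. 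In the complementary regime $\|\tau\|^{2} > c_{1}\mathbf{y}^{T}H\mathbf{y}$, the cross term is bounded by $C\|\tau\|^{2}$, which cannot be directly absorbed into $c_{\mathbf{z}}\|\tau\|^{3}$ since $\|\tau\|^{2} \gg \|\tau\|^{3}$ as $\tau \to 0$. Here I would mimic the combinatorial argument from the diagonal-case proof: separate $\tau$ into $\tau_{\cA}$ and $\tau_{\cI}$ components according to the zero/nonzero pattern of $\mathbf{z}$, use the splitting $\|\tau\|^{2} \leq \tfrac{1}{m}\sum_{k,j \in \cI}|\tau_{k}-\tau_{j}|^{2} + \tfrac{n}{m}\|\tau_{\cA}\|^{2}$ appearing in the proof of Lemma \ref{lem:6-2}, and apply Young's inequality with exponent $p = 4$ to produce residuals of the form $\|\tau_{\cA}\|^{2p} + \|\tau_{\cI}\|_{1}^{p}$ which are $o(\|\tau\|^{3})$ and absorbed into $2\beta^{2}c_{\mathbf{z}}\|\tau\|^{3}$, with the remaining shortfall borrowed from the quadratic slack $\mathbf{y}^{T}H\mathbf{y}$ permitted by the regime assumption.

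The main obstacle is this second regime, where the residual $O(\|\tau\|^{2})$ generated by a naive Young's inequality exceeds the available $\|\tau\|^{3}$ as $\tau \to 0$. Resolving it requires the finer subspace decomposition and combinatorial inequalities from the diagonal case, adapted to the general PSD setting via the refined Cauchy-Schwarz estimate; this is precisely the interplay between the diagonal-case machinery and the spectral structure of $H$ that the proof must exploit. The positive semidefiniteness hypothesis is essential at every step, and it is precisely this assumption that allows the argument to handle non-isolated stationary points which can arise when $\ker H$ overlaps nontrivially with the zero-component pattern of $\mathbf{z}$.
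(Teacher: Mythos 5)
Your plan hinges on the pointwise lower bound $\|\grad f(\mathbf{y})\|^{2} \gtrsim \mathbf{y}^{T}H\mathbf{y} + \|\tau\|^{3}$, and this target inequality is actually false in the regime you call ``Regime 2.'' Starting from the identity $\grad f(\mathbf{y}) = P_{\mathbf{y}}^{\perp}H\mathbf{y} + 2\beta P_{\mathbf{y}}^{\perp}\diag(\tau)\mathbf{y}$, when the two vectors $H\mathbf{v}$ and $\mathbf{w}=2P_{\mathbf{y}}^{\perp}\diag(\tau)\mathbf{y}$ are comparable in magnitude and nearly antiparallel, the gradient norm can be an order of magnitude smaller than either summand: the paper's hard case (Case~3 of its own proof) exhibits exactly this, with $\mathbf{v}^{T}H\mathbf{v}=\Theta(\|\Delta\|^{4})$, $\|\tau\|=\Theta(\|\Delta\|^{2})$, and $\|\grad f(\mathbf{y})\|^{2}=\Theta(\|\Delta\|^{6})$, so that $\|\grad f(\mathbf{y})\|^{2} \ll \mathbf{y}^{T}H\mathbf{y}$. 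Note this still gives the KL exponent $\tfrac14$ because $|f(\mathbf{y})-f(\mathbf{z})|^{3/2}=\Theta(\|\Delta\|^{6})$ there; what is only needed is $\|\grad f\|^{2}\gtrsim(\mathbf{y}^{T}H\mathbf{y})^{3/2}+\|\tau\|^{3}$, a strictly weaker statement. Your refined Cauchy--Schwarz $|\mathbf{y}^{T}H\diag(\tau)\mathbf{y}|\leq\|H\|_{2}^{1/2}\sqrt{\mathbf{y}^{T}H\mathbf{y}}\,\|\tau\|$ is correct, and Regime~1 (where $\|\tau\|^{2}\leq c_{1}\mathbf{y}^{T}H\mathbf{y}$) does go through, but Regime~2 cannot be closed by ``borrowing quadratic slack,'' because the quadratic positive term $\sigma_{-}(H)\,\mathbf{y}^{T}H\mathbf{y}$ need not dominate the cross-term residual and, worse, the near-cancellation scenario means that no bound on the absolute size of the cross term can substitute for an analysis of when cancellation actually occurs.

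There is a second, subtler issue with ``mimicking the combinatorial argument from the diagonal case.'' That argument exploits the structural identities $H_{[\mathcal{I}\mathcal{I}]}=0$ and $H_{[\mathcal{I}\mathcal{A}]}=0$ that hold when $A$ is diagonal; neither holds for general symmetric $A$, so the $\cA$/$\cI$ decomposition does not decouple the blocks in the same way. The paper instead handles the cancellation regime by a different decomposition: it writes $H\Delta$ and $\mathbf{w}$ each as a multiple of $P_{\mathbf{y}}^{\perp}\Delta$ plus a remainder, bounds the combined coefficient from below (Case~2), and then in the hardest sub-regime (Case~3) expands everything in powers of $\|\Delta\|$, tracking the orders of $\gamma_{1},\dots,\gamma_{4}$, and crucially uses $H\succeq 0$ to establish $\mathrm{null}\,H_{\mathcal{A}\mathcal{A}}\subset\mathrm{null}\,H_{\mathcal{I}\mathcal{A}}$ and then split $\Delta_{\mathcal{A}}$ along $\mathrm{null}\,H_{\mathcal{A}\mathcal{A}}$ and its complement. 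None of this is captured by the factorization $H=LL^{T}$ plus Young's inequality; you would need to go back, weaken the target inequality, and supply a genuinely new argument in the cancellation regime.
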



\begin{proof}
Without loss of generality we assume $\beta=1$. Let ${\bf y} \in \Sn$ be arbitrary and let us set $\Delta = {\bf y} - {\bf z}$, and 
\begin{align*}
&\gamma_{1} = \sum_{k\in \cI}z_{k}^{3}\Delta_{k},\quad \gamma_{2} = \sum_{k\in\cI}  z_{k}^2\Delta_{k}^2, \quad \gamma_{3}=\sum_{k\in \cI} z_{k}\Delta_{k}^{3},\quad \gamma_{4}= \|\Delta\|_4^4.
\end{align*}
Based on the representation $\grad{f({\bf y})} = P_{\bf y}^\bot [H + 2 \diag(\tau)]{\bf y}$, we now introduce the following decompositions
\begin{align*}
& 2\diag(\tau){\bf y} = {\bf w} + c_{1} {\bf y}, \quad {\bf w} = 2 P_{\bf y}^\bot \diag(\tau){\bf y} , \quad c_{1} = 2 {\bf y}^T \diag(\tau){\bf y} \\
& {\Delta} = {\bf u} + {\bf v}, \quad H{\bf u} = 0, \quad {\bf u}^T {\bf v} = 0, \quad \|H{\bf v}\| \geq \sigma_-(H)\|{\bf v}\|,
\end{align*}
where $\sigma_-(H)$ denotes the smallest positive singular value of $H$. Using the latter decomposition and $H{\bf y} = H\Delta$, we can express the norm of the Riemannian gradient as follows 
\begin{equation} \label{eq:grad-calc-kl} \|\grad{f({\bf y})}\|^2 = \|H{\Delta} + {\bf w}\|^2 - ({\Delta}^T H{\Delta})^2 =  \|H{\bf v} + {\bf w}\|^2 - ({\bf v}^T H{\bf v})^2 \end{equation}  
Let $\lambda_+(H)$ be the largest eigenvalue of $H$. Then, by definition of ${\bf v}$, we obtain
%
\begin{align}
|{\bf v}^{T}H{\bf v}| & \leq \lambda_+(H) \|{\bf v}\|^2 \leq \lambda_+(H){\sigma_{-}(H)}^{-2} \|H{\bf v}\|^2 =: \bar \sigma^{-1} \cdot {\bf v}^{T} H^{2} {\bf v}. 
\label{eqn:6-10}
\end{align}
%
Moreover, Lemma \ref{lem:6-2} yields
\begin{align}
\|\tau\|^\frac32 \leq \eta_1 \|{\bf w}\| 
\label{eqn:6-11}
\end{align}
for some constant $\eta_{1}>0$ and for all ${\bf y} \in \Sn$ sufficiently close to ${\bf z}$. Let $\epsilon > 0$ be an arbitrary small positive constant. We now discuss three different cases.

\textit{Case 1.} $\lVert\mathbf{w}\rVert \geq (1+\epsilon) \lVert H{\bf v}\rVert$ or $\lVert\mathbf{w}\rVert \leq (1-\epsilon)\lVert H{\bf v}\rVert$. In this case, we have
\begin{align*}
\lVert H{\bf v} + {\bf w}\rVert^{2} \geq \frac{\epsilon^{2}}{2} \left(\lVert{\bf w}\rVert^{2}+\lVert H{\bf v}\rVert^{2} \right).
\end{align*}
Let us set $\delta_{\bf z} > 0$ sufficiently small such that $\|{\bf v}\|^2 \leq \epsilon^2 \bar \sigma / (4\lambda_+(H))$ and $|{\bf v}^TH{\bf v}| \leq 1$. Using \eqref{eqn:6-10}, \eqref{eqn:6-11}, and the estimates $({\bf v}^{T}H{\bf v})^{2}\leq\lambda_{+}^{2}(H)\lVert{\bf v}\rVert^{4}$ and $[\frac 12 |a + b|]^{3/2} \leq [|a|^{3/2} + |b|^{3/2}] / \sqrt{2}$, $a,b \in \R$, it follows
\begin{align*}
\lVert\grad{f(\mathbf{y})}\rVert^{2}&\geq\frac{\epsilon^{2}}{2}(\lVert\mathbf{w}\rVert^{2}+\lVert H{\bf v}\rVert^{2})-\lambda_{+}^{2}(H)\lVert{\bf v}\rVert^{4}\geq\frac{\epsilon^{2}}{4}\left(2\lVert\mathbf{w}\rVert^{2}+\lVert H{\bf v}\rVert^{2}\right)\\
&\geq \frac{\epsilon^2}{2} \min\left \{\frac{1}{\eta_{1}},\frac{\bar\sigma}{2} \right \} \left[ \|\tau\|^3 +  |{\bf v}^T H {\bf v}|^\frac32 \right ]\\
&\geq \frac{\epsilon^2\sqrt{2}}{2} \min\left \{\frac{1}{\eta_{1}},\frac{\bar\sigma}{2} \right \}  \cdot | f({\bf y}) - f({\bf z}) |^\frac32.
\end{align*}
Thus, we can infer that the KL exponent of problem \eqref{eqn:obj} at ${\bf z}$ is $\frac{1}{4}$.
%

\textit{Case 2.} $(2-{\epsilon})r_{+}^{2}\lVert\Delta_{\mathcal{I}}\rVert\geq\lVert\Delta\rVert^2$ or $\gamma_1 \leq (2-\epsilon)r_+^2 \|\Delta_\cI\|^2 \|\Delta\|^{-2}$. 
First, due to $\Delta^T {\bf y} = \half \|\Delta\|^2$, we have $P_{\mathbf{y}}^{\perp}\Delta=\Delta-\frac{1}{2}\lVert\Delta\rVert_{2}^{2}\mathbf{y}$. Defining $t_\Delta := \Delta^T P_{\mathbf{y}}^{\perp}\Delta = \|\Delta\|^2 - \frac14 \|\Delta\|^4$, we will work with the following decompositions
\begin{align*}
& H\Delta = \Delta^{T}H\Delta \cdot \mathbf{y}+c_{2} P_{\mathbf{y}}^{\perp}\Delta+\mathbf{w}_{1}, \quad c_{2}=\frac{1-\frac{1}{2}\lVert\Delta\rVert^{2}}{t_\Delta}\Delta^{T}H\Delta,\\
& \mathbf{w}=c_{3} P_{\mathbf{y}}^{\perp}\Delta+\mathbf{w}_{2}, \quad c_{3}= \frac{1}{t_\Delta}[2\Delta^T\diag(\tau){\bf y} - \|\Delta\|^2 {\bf y}^T\diag(\tau){\bf y}] 
\end{align*}
where $\mathbf{w}_{1}$ and $\mathbf{w}_{2}$ are vectors orthogonal to $\mathbf{y}$ and $\Delta$. Hence, by \eqref{eq:grad-calc-kl}, it holds that
\begin{equation*}
\lVert\grad{f(\mathbf{y})}\rVert^{2} = (c_{2}+c_{3})^{2} \cdot t_\Delta +\lVert\mathbf{w}_{1}+\mathbf{w}_{2}\rVert^{2} \geq (c_{2}+c_{3})^{2} \cdot t_\Delta.
\end{equation*}
Using the definitions introduced in the first case, we can express $t_\Delta \cdot c_{3}$ via
\[ t_\Delta \cdot c_{3} = (2-\|\Delta\|^2) \gamma_4 + (6 - 4 \|\Delta\|^2) \gamma_3 + (4-5\|\Delta\|^2) \gamma_2 - 2 \|\Delta\|^2 \gamma_1. \]
Now, if $(2-{\epsilon})r_{+}^{2}\lVert\Delta_{\mathcal{I}}\rVert \geq\lVert\Delta\rVert^{2}$, we readily obtain $(4-5\|\Delta\|^2) \gamma_2 - 2\|\Delta\|^2 \gamma_1 \geq (2\epsilon - 5\|\Delta\|^2) r_+^2 \|\Delta_\cI\|^2$ and thus, it follows
\begin{equation} \label{eq:low-tdelta} t_\Delta \cdot c_{3} \geq \gamma_4 + \epsilon r_+^2 \|\Delta_\cI\|^2 + o(\|\Delta_\cI\|^2) \geq \|\Delta\|_4^4 + \frac{\epsilon r_+^2}{2} \|\Delta_\cI\|^2 \end{equation}
for $\Delta$ sufficiently small. 
Otherwise, if $\gamma_1 \leq (2-\epsilon)r_+^2 \|\Delta_\cI\|^2 \|\Delta\|^{-2}$, 
then we also have $(4-5\|\Delta\|^2) \gamma_2 - 2 \|\Delta\|^2 \gamma_1 \geq (2\epsilon - 5\|\Delta\|^2) r_+^2 \|\Delta_\cI\|^2$ and thus, \eqref{eq:low-tdelta} holds in both sub-cases. 
Consequently, due to the positive semidefiniteness of $H$ and $\|\Delta\|^2 = - 2 \Delta^T {\bf z} = -2\Delta_\cI^T z_\cI \leq 2 \|\Delta_\cI\|$, we can infer
\begin{align*}
\lVert\grad{f(\mathbf{y})}\rVert & \geq|c_{2}+c_{3}| \sqrt{t_\Delta} = \left[\left(1-\half \|\Delta\|^2\right) \Delta^{T}H\Delta + t_\Delta c_{3}\right] t_\Delta^{-1/2} \\
& \geq [\Delta^{T}H\Delta + 2 \|\Delta\|_4^4 +\epsilon r_{+}^{2}\lVert\Delta_{\mathcal{I}}\rVert^{2}] \cdot (2\|\Delta\|)^{-1} \\
& \geq  [\Delta^{T}H\Delta + 2 \|\Delta\|_4^4 +\epsilon r_{+}^{2}\lVert\Delta_{\mathcal{I}}\rVert^{2}]^{\frac34} \cdot \frac{\epsilon^{\frac14}\sqrt{r_{+}\lVert\Delta_{\mathcal{I}}\rVert}}{2\|\Delta\|} \\ 
& \geq \eta_2 [\Delta^{T}H\Delta + 2 \|\Delta\|_4^4 +\epsilon r_{+}^{2}\lVert\Delta_{\mathcal{I}}\rVert^{2}]^{\frac34}, 
\end{align*}
where $\eta_2 := \epsilon^{\frac14}\sqrt{r_{+}} / 2\sqrt{2}$.
Next, utilizing \eqref{eqn:6-2} and $|\tau_k| \leq 2 |\Delta_k|$ for all $k \in \cI$, we finally obtain
\begin{align*}
\lvert f(\mathbf{y})-f(\mathbf{z})\rvert&=\frac{1}{2}[ \Delta^{T}H\Delta+ \|\tau\|^2]  \\ & \leq \frac{1}{2}\left[\Delta^{T}H\Delta+ \|\Delta_\cA\|_4^4 + 4\lVert\Delta_{\mathcal{I}}\rVert^{2} \right] \leq \eta_{3} \lVert\grad{f(\mathbf{y})}\rVert^{\frac43}
\end{align*}
for some constant $\eta_{3}>0$ and for all ${\bf y}$ sufficiently close to ${\bf z}$. Hence, the KL exponent is $\frac{1}{4}$ in this case.

\textit{Case 3.} $(1-\epsilon)\lVert H{\bf v}\rVert\leq\lVert\mathbf{w}\rVert\leq(1+\epsilon)\lVert H{\bf v}\rVert$, $\gamma_{1}\geq(2-\epsilon)r_{+}^{2}\lVert\Delta_{\mathcal{I}}\rVert^{2}\|\Delta\|^{-2}$ and $\left(2-\frac{\epsilon}{2}\right)r_{+}^{2}\lVert\Delta_{\mathcal{I}}\rVert\leq\lVert\Delta\rVert^{2}\leq2\lVert\Delta_{\mathcal{I}}\rVert$. In this case, we have
\begin{align}
\gamma_{1} = \Theta(\|\Delta\|^2) ,\quad \gamma_{2} = \Theta( \|\Delta\|^4), \quad \gamma_{3}=O(\|\Delta\|^{6}),\quad \gamma_{4}=\Theta(\|\Delta\|^{4}).
\label{eq:order-gamma}
\end{align}
Let us set $m := |\cI|$ and define $\sigma_{k}= \Delta_{k} z_{k}+\frac{1}{2m}\|\Delta\|^{2}$ for all $k\in \cI$ and $\sigma_k = 0$ for all $k \in \cA$. 
Then, it follows 
\begin{align*}
\sum_{k \in [n]} \sigma_k = 0, \quad \|\sigma\|_1 \geq \sum_{k\in \cI} z_{k}^{2}\sigma_{k} = \gamma_1 + \frac12 \|\Delta\|^2,
\end{align*}
and $\|\sigma\| = \Theta(\|\Delta\|^2)$.
We now express $\|\mathbf{w}\|^{2}$ in terms of $\gamma_{1}$, $\gamma_2$, $\gamma_3$, and $\gamma_{4}$. Specifically, by utilizing \eqref{eq:order-gamma}, we obtain
\begin{align*}
\frac14 \|\mathbf{w}\|^{2} & = {\bf y}^T \diag(|\tau|^2){\bf y} - ({\bf y}\diag(\tau){\bf y})^2 \\ & = \sum_{k \in [n]} \Delta_k^2 [\Delta_k^4 + 6\Delta_k^3z_k + 13\Delta_k^2z_k^2 + 12 \Delta_kz_k^3 + 4z_k^4] - 4\gamma_1^2  - 20\gamma_1\gamma_2 \\
& \hspace{2ex} - 25\gamma_2^2 - 8\gamma_3(5\gamma_2+2\gamma_1) - 16 \gamma_3^2 - 2 \|\Delta\|_4^4(4\gamma_3+5\gamma_2+2\gamma_1) - \|\Delta\|_4^8 \\ & = 4 \sum_{k \in [n]} \Delta_k^2 z_k^4 - 4\gamma_1^2 + O(\|\Delta\|^6) = 2 \sum_{k, j \in \cI} z_k^2 z_j^2 (\sigma_k - \sigma_j)^2 + O(\|\Delta\|^6) \\ & = \Theta(\|\sigma\|^2) + O(\|\Delta\|^6),
\end{align*}
which implies $\|\mathbf{w}\|=\Theta(\|\Delta\|^{2})$ and $\| H{\bf v}\|=\Theta(\|\Delta\|^{2})$. As a consequence, we get ${\bf v}^{T}H{\bf v}=\Theta(\|\Delta\|^{4})$ and
\begin{align}\label{eqn:case3-diff}
2 \lvert f(\mathbf{y})-f(\mathbf{z})\rvert=\lvert{\bf v}^{T}H{\bf v} + \|\Delta\|_4^4 +4\gamma_{3}+4\gamma_{2}\rvert = \Theta(\|\Delta\|^{4}).
\end{align}
For some index sets $\mathcal K, \mathcal J \subset [n]$, let $H_{\mathcal K\mathcal J} \in \R^{|\mathcal K| \times |\mathcal J|}$ denote the submatrix $H_{\mathcal K\mathcal J} = (H_{kj})_{k \in \mathcal K, j \in \mathcal J}$. Due to the positive semidefiniteness of $H$, we have $H_{\mathcal{I}\mathcal{I}}\succeq0$ and $H_{\mathcal{A}\mathcal{A}}\succeq0$. Furthermore, due to \eqref{eq:grad-calc-kl} and \eqref{eq:order-gamma}, we obtain
%
\begin{align}
\nonumber \|\grad{f({\bf y})}\|^2 & = \|H\Delta + {\bf w} \|^2 - \Theta(\|\Delta\|^8) \\ \nonumber
& = \|H_{\cI\cA}^T\Delta_\cI + H_{\cA\cA} \Delta_\cA + 2\diag(|\Delta_\cA|^2)\Delta_\cA - c_1 \Delta_\cA \|^2   \\ \label{eq:grad-split}& \hspace{4ex} + \| H_{\cI\cI} \Delta_\cI + H_{\cI \cA} \Delta_\cA + {w}_\cI\|^2 - \Theta(\|\Delta\|^8).
\end{align}
We define
\begin{align}
\nonumber {\bf g}_1 & := H_{\cI\cI} \Delta_\cI + H_{\cI \cA} \Delta_\cA + {w}_\cI, \\
{\bf g}_2 &:= H_{\cI\cA}^T\Delta_\cI + H_{\cA\cA} \Delta_\cA + 2\diag(|\Delta_\cA|^2)\Delta_\cA - c_1 \Delta_\cA 
\end{align}
%

Next, let $\eta_{4},\eta_{5}>0$ and $\mu \in (0,\frac{1}{2})$ be given constants and suppose $\Delta_{\mathcal{I}}^{T}(H_{\mathcal{I}\mathcal{I}}\Delta_{\mathcal{I}}+H_{\mathcal{I}\mathcal{A}}\Delta_{\mathcal{A}})\geq-\eta_{4}\|\Delta\|^{4+\mu}$. Then, using $\half c_1 = \|\Delta\|_4^4 + 4\gamma_3 + 5\gamma_2 + 2\gamma_1$, $\Delta_\cI^T y_\cI = - \half \|\Delta\|^2$, and $\Delta_\cI^T w_\cI = 2\|\Delta\|_4^4 + 6 \gamma_3 + 4\gamma_2 - c_1 \Delta_\cI^T y_\cI$, it follows
\begin{align*}
\Delta_{\mathcal{I}}^{T}{\bf g}_{1}=\Delta_{\mathcal{I}}^{T}(H_{\mathcal{I}\mathcal{I}}\Delta_{\mathcal{I}}+H_{\mathcal{I}\mathcal{A}}\Delta_{\mathcal{A}})+ \Delta_\cI^T w_\cI  \geq \Theta(\|\Delta\|^{4}).
\end{align*}
%
Similarly, in the case $\Delta_{\mathcal{I}}^{T}(H_{\mathcal{I}\mathcal{I}}\Delta_{\mathcal{I}}+H_{\mathcal{I}\mathcal{A}}\Delta_{\mathcal{A}})\leq-\eta_{5}\|\Delta\|^{4-\mu}$ and if $\|\Delta\|$ is sufficiently small, we get
\[ \Delta_{\mathcal{I}}^{T}{\bf g}_{1} \leq -\frac{\eta_{5}}{2}\|\Delta\|^{4-\mu}, \]
Combining both cases, we can infer $\|\Delta_{\mathcal{I}}\|\|{\bf g}_{1}\|\geq|\Delta_{\mathcal{I}}^{T}{\bf g}_{1}| \geq \eta_6 \|\Delta\|^4$ for some $\eta_6 > 0$ and for all ${\bf y}$ sufficiently close to ${\bf z}$. This implies $\|{\bf g}_{1}\| \geq [(2-\frac{\epsilon}{2})r_+^2]^{-1}\eta_6 \|\Delta\|^2$ and hence, by \eqref{eq:grad-split}, we obtain $\|\grad f({\bf y})\| \geq \Theta(\|\Delta\|^2)$. 
Considering equation \eqref{eqn:case3-diff}, the KL exponent has to be $\theta = \frac{1}{2}$ in these two cases.

Now, let us suppose $-\eta_{5}\|\Delta\|^{4-\mu}\leq\Delta_{\mathcal{I}}^{T}(H_{\mathcal{I}\mathcal{I}}\Delta_{\mathcal{I}}+H_{\mathcal{I}\mathcal{A}}\Delta_{\mathcal{A}})\leq-\eta_{4}\|\Delta\|^{4+\mu}$. Due to $\Delta_{\mathcal{I}}^{T}H_{\mathcal{I}\mathcal{I}}\Delta_{\mathcal{I}}\geq0$, this directly yields $-\eta_{5}\|\Delta\|^{4-\mu}\leq\Delta_{\mathcal{I}}^{T}H_{\mathcal{I}\mathcal{A}}\Delta_{\mathcal{A}}<0$ and 
\begin{align*}
\Delta_{\mathcal{A}}^{T}H_{\mathcal{A}\mathcal{A}}\Delta_{\mathcal{A}}\geq\Delta_{\mathcal{A}}^{T}(H_{\mathcal{I}\mathcal{A}}^{T}\Delta_{\mathcal{I}}+H_{\mathcal{A}\mathcal{A}}\Delta_{\mathcal{A}})\geq\Delta^{T}H\Delta=\Theta(\|\Delta\|^{4}).
\end{align*}
If we have $\Delta_{\mathcal{A}}^{T}H_{\mathcal{A}\mathcal{A}}\Delta_{\mathcal{A}}\geq\eta_{7}\|\Delta\|^{4-2\mu}$ for some constant $\eta_{7}>0$, then it holds that
\begin{align*}
\Delta_{\mathcal{A}}^{T}{\bf g}_{2}&=\Delta_{\mathcal{A}}^{T}(H_{\mathcal{I}\mathcal{A}}^{T}\Delta_{\mathcal{I}}+H_{\mathcal{A}\mathcal{A}}\Delta_{\mathcal{A}})+2\|\Delta_{\mathcal{A}}\|_{4}^{4}-c_1\|\Delta_{\mathcal{A}}\|^{2}\\
&\geq\eta_{7}\|\Delta\|^{4-2\mu}-\eta_{5}\|\Delta\|^{4-\mu}+O(\|\Delta\|^{4})\geq\frac{\eta_{7}}{2}\|\Delta\|^{4-2\mu}
\end{align*}
if $\|\Delta\|$ is sufficiently close to zero. As before, this estimate can be utilized to show  $\|\grad{f({\bf y})}\|  \geq \Theta(\|\Delta\|^{3-2\mu})$ and consequently, the KL exponent is $\frac{1+2\mu}{4}$. 

Finally, we consider $\eta_{8}\|\Delta\|^{4}\leq\Delta_{\mathcal{A}}^{T}H_{\mathcal{A}\mathcal{A}}\Delta_{\mathcal{A}}\leq\eta_{7}\|\Delta\|^{4-2\mu}$, where $\eta_{8} > 0$ is chosen such that $\Delta^{T}H\Delta\geq\eta_{8}\|\Delta\|^{4}$. Let us define the decompositions
\begin{align*}
\Delta_{\mathcal{A}}=\psi_{1}+\xi_{1},\quad\psi_{1}\in\mathrm{null}~H_{\mathcal{A}\mathcal{A}},\quad\xi_{1} \in [\mathrm{null}~H_{\mathcal{A}\mathcal{A}}]^\bot ,\\
\diag(|\Delta_\cA|^2)\Delta_{\mathcal{A}}=\psi_{2}+\xi_{2},\quad\psi_{2}\in\mathrm{null}~H_{\mathcal{A}\mathcal{A}}, \quad \xi_{2} \in [\mathrm{null}~H_{\mathcal{A}\mathcal{A}}]^\bot,
\end{align*}
where $\mathrm{null}~M$ is the null space of matrix $M$. We then have $\|\xi_{1}\|=\Theta(\|H_{\mathcal{A}\mathcal{A}}\xi_{1}\|)=\Theta(\sqrt{\xi_{1}^{T}H_{\mathcal{A}\mathcal{A}}\xi_{1}})=O(\|\Delta\|^{2-\mu})$ and $\lVert\psi_{1}\rVert=O(\|\Delta\|)$. Notice that such decompositions exist due to the symmetry of $H_{\cA\cA}$.

Since $H$ is positive semidefinite, we can show that $\mathrm{null}\,H_{\mathcal{A}\mathcal{A}}\subset\mathrm{null}\,H_{\mathcal{I}\mathcal{A}}$. If $H_{\mathcal{I}\mathcal{A}}=0$, then this claim is certainly true. Otherwise, if we assume that the statement is false, the set $S = \mathrm{null}\,H_{\mathcal{A}\mathcal{A}}\cap [\mathrm{null}\,H_{\mathcal{I}\mathcal{A}}]^{\bot}$ is nonempty and there exists $\psi \in S$ and $\xi \in [\mathrm{null}\,H_{\mathcal{I}\mathcal{A}}^{T}]^{\bot}$. Then it holds
\begin{align*}
\begin{bmatrix}\xi \\ \nu\psi \end{bmatrix} \begin{bmatrix} H_{\cI\cI} & H_{\cI\cA} \\ H_{\cI\cA}^T & H_{\cA\cA} \end{bmatrix} \begin{bmatrix}\xi &\nu\psi\end{bmatrix}=\xi^{T}H_{\mathcal{I}\mathcal{I}}\xi +2\nu \psi^{T}H_{\mathcal{I}\mathcal{A}}^{T}\xi \geq0,\quad\forall~\nu\in\mathbb{R}.
\end{align*}
But since $\psi^{T}H_{\mathcal{I}\mathcal{A}}^{T}\xi \neq0$, we can choose $\nu$ such that $\xi^{T}H_{\mathcal{I}\mathcal{I}}\xi+2\nu\psi^{T}H_{\mathcal{I}\mathcal{A}}^{T}\xi<0$, which is a contradiction. Hence, due to $H_{\mathcal{I}\mathcal{A}}^{T}\Delta_{\mathcal{I}} \in \mathrm{ran}\,H_{\cI\cA}^T = [\mathrm{null}\,H_{\mathcal{I}\mathcal{A}}]^\bot$, we can infer $H_{\mathcal{I}\mathcal{A}}^{T}\Delta_{\mathcal{I}} + H_{\mathcal{A}\mathcal{A}}\Delta_{\mathcal{A}}\in [\mathrm{null}\,H_{\mathcal{A}\mathcal{A}}]^{\perp}$. Consequently, ${\bf g}_2$ can be written as ${\bf g}_2 = {\bf g}_3 + {\bf d}$ where ${\bf g}_3  \in [\mathrm{null}\,H_{\mathcal{A}\mathcal{A}}]^{\perp}$ and ${\bf d}=2\psi_{2}-c_{1}\psi_{1} \in \mathrm{null}\,H_{\mathcal{A}\mathcal{A}}$. 
If $\|{\bf d}\|\geq\frac{\eta_{8}}{2}\|\Delta\|^{3}$, then we obtain $\|\grad{f({\bf y})}\|\geq\|{\bf g}_{2}\|\geq\|{\bf d}\|\geq\frac{\eta_{8}}{2}\|\Delta\|^{3}$ and, by \eqref{eqn:case3-diff}, the KL exponent is $\frac{1}{4}$. Otherwise, if $\|{\bf d}\|\leq\frac{\eta_{8}}{2}\|\Delta\|^{3}$, it follows
\begin{align*}
2\lVert\Delta_{\mathcal{A}}\rVert_{4}^{4}-c_{1}\lVert\Delta_{\mathcal{A}}\rVert^{2} 
&= 2\psi_{1}^{T}\psi_{2}-c_{1}\lVert\psi_{1}\rVert^{2}+2\xi_{1}^{T}\xi_{2}-c_{1}\lVert\xi_{1}\rVert^{2}\\
&=\psi_{1}^{T}\mathbf{d}+2\xi_{1}^{T}\xi_{2}-c_{1}\lVert\xi_{1}\rVert^{2}
\geq-\frac{\eta_{8}}{2}\|\Delta\|^{4}+O(\lVert\Delta\rVert^{5-\mu})
\end{align*}
and
\begin{align*}
\Delta_{\mathcal{A}}^{T}{\bf g}_{2}&=\Delta_{\mathcal{A}}^{T}(H_{\mathcal{I}\mathcal{A}}^{T}\Delta_{\mathcal{I}}+H_{\mathcal{A}\mathcal{A}}\Delta_{\mathcal{A}})+2\lVert\Delta_{\mathcal{A}}\rVert_{4}^{4}-c_{1}\lVert\Delta_{\mathcal{A}}\rVert^{2}\\
&\geq\Delta^{T}H\Delta-\frac{\eta_{8}}{2}\|\Delta\|^{4}+O(\lVert\Delta\rVert^{5-\mu}) \\
&\geq\frac{\eta_{8}}{2}\|\Delta\|^{4}+\Theta(\lVert\Delta\rVert^{5-\mu})\geq\frac{\eta_{8}}{4}\|\Delta\|^{4}.
\end{align*}
Thus, we have $\|\grad{f({\bf y})}\|\geq \Theta(\|\Delta\|^{3})$ and the KL exponent is $\frac{1}{4}$.
\end{proof}

\section{Conclusions}
\label{sec:conclusion}

In this paper, we analyze the geometric properties of a class of
quartic-quadratic optimization problems under a single spherical constraint.
When the matrix $A$ in the quadratic form is diagonal, the stationary points and
local minima can be fully characterized and we show that the minimization
problem does not possess any spurious local minima. Furthermore, a closed-form
expression for global minimizer is available which is based on the projection
onto the $n$-simplex. If $A$ is a rank-one matrix, a similar analysis can be
performed and we derive characteristic properties of associated local minima and
uniqueness of global minima up to a certain phase shift. We verify that the
problem satisfies a Riemannian-type strict-saddle property in the real space
when the interaction coefficient is at least of order $O(n^{3/2})$ or
sufficiently small which corresponds to the case where either the quartic or the
quadratic part is the leading term of the objective function. Finally, we
estimate the Kurdyka-\L ojasiewicz exponent $\theta$ of problem \eqref{eqn:obj}
and show that $\theta$ is $\frac{1}{4}$ for all stationary points ${\bf z}$ if
$A$ is diagonal or if the problem is restricted to the real space and ${\bf z}$
fulfills a certain global optimality condition.

\bibliographystyle{spmpsci}
\bibliography{references}

\end{document}